\documentclass[11pt,english,oneside,a4paper]{amsart}

\usepackage[english]{babel}
\usepackage[utf8]{inputenc}
\usepackage[T1]{fontenc}
\usepackage[bottom=3cm,a4paper]{geometry}
\usepackage[hidelinks]{hyperref}
\usepackage{amsmath,amsfonts,amssymb,mathtools,amsthm}
\usepackage{listings,verbatim,fancyvrb,xspace,enumerate}
\usepackage{xcolor,graphicx,ytableau,tikz}
\usepackage[all, cmtip]{xy}
\usepackage{fancyhdr}
\usepackage[toc,page]{appendix}
\usepackage{pdfpages}

\pagestyle{fancy}
\fancyhf{}
\rfoot{Page \thepage}

\newtheorem{theorem}{Theorem}[subsection]
\newtheorem*{theorem*}{Theorem}

\newtheorem{corollary}[theorem]{Corollary}
\newtheorem*{corollary*}{Corollary}
\newtheorem{lemma}[theorem]{Lemma}
\newtheorem{proposition}[theorem]{Proposition}
\theoremstyle{definition}
\newtheorem{example}[theorem]{Example}
\newtheorem*{example*}{Example}
\newtheorem*{examples*}{Examples}
\newtheorem{remark}[theorem]{Remark}

\newtheorem{conjecture}{Conjecture}

\newtheorem*{ack}{Acknowledgements}

\newsavebox{\eqbox}
\newenvironment{longequation*} {\begin{lrbox}{\eqbox}$} {$\end{lrbox}\begin{equation*}\resizebox{\linewidth}{!}{\ensuremath{\displaystyle\usebox{\eqbox}}}\end{equation*}}


\makeatletter
\newcommand*\bigcdot{\mathpalette\bigcdot@{.5}}
\newcommand*\bigcdot@[2]{\mathbin{\vcenter{\hbox{\scalebox{#2}{\(\m@th#1\bullet\)}}}}}
\newcommand{\tikznode}[3][inner sep=0pt]{\tikz[remember picture,baseline=(#2.base)]{\node(#2)[#1]{\(#3\)};}}
\makeatother

\def\bydef{\coloneqq}

\DeclarePairedDelimiter{\abs}{\lvert}{\rvert}
\DeclarePairedDelimiter{\Set}{\lbrace}{\rbrace}

\DeclareMathOperator{\Span}{Span}
\DeclareMathOperator{\diff}{d}

\DeclareMathOperator{\Vect}{Vect}

\DeclareMathOperator{\length}{length}
\DeclareMathOperator{\Image}{Im}
\DeclareMathOperator{\Hom}{Hom}

\DeclareMathOperator{\Id}{Id}

\DeclareMathOperator{\Proj}{Proj}

\DeclareMathOperator{\Diag}{Diag}

\DeclareMathOperator{\Ker}{Ker}

\DeclareMathOperator{\GL}{GL}
\DeclareMathOperator{\Diff}{diff}
\DeclareMathOperator{\hw}{hw}
\DeclareMathOperator{\Wronsk}{Wronsk}
\DeclareMathOperator{\Trace}{Trace}
\DeclareMathOperator{\Spec}{Spec}
\DeclareMathOperator{\can}{can}
\DeclareMathOperator{\End}{End}


\newcommand{\C}{\mathbb{C}}
\renewcommand{\O}{\mathcal{O}} 
\renewcommand{\P}{\mathbf{P}} 

\newcommand{\N}{\mathbb{N}}
\newcommand{\Z}{\mathbb{Z}}

\let\mathbi\boldsymbol

\author{Antoine Etesse}
\email{antoine.etesse@math.univ-toulouse.fr}
\address{Institut Mathématique de Toulouse (IMT), Université Paul Sabatier}
\title{On the Schmidt-Kolchin conjecture on differentially homogeneous polynomials. Applications to (twisted) jet differentials on projective spaces.}
\subjclass{12H05, 13N15, 14A25, 15A69, 20C15, 32N10, 35E20}
\keywords{Differential polynomials, Schmidt--Kolchin conjecture, Green--Griffiths bundles of projective spaces, Jet differentials of projective spaces.}

\begin{document}
\sloppy

\begin{abstract}
The main goal of this paper is to prove the Schmidt--Kolchin conjecture. This conjecture says the following: the vector space of degree \(d\) differentially homogeneous polynomials in \((N+1)\) variables is of dimension \((N+1)^{d}\). 
Next, we establish a one-to-one correspondance between differentially homogeneous polynomials in \((N+1)\) variables, and twisted jet differentials on projective spaces. As a by-product of our study of differentially homogeneous polynomials, we are thus able to understand explicitly twisted jet differentials on projective spaces.
\end{abstract}
\maketitle
\tableofcontents

\section*{Introduction.}
The goal of this paper is to prove the Schmidt-Kolchin conjecture on differentially homogeneous polynomials, and give applications to Green--Griffiths bundles of projective spaces. In order to state this conjecture, let us first set some notations (which are going to be used throughout the whole paper). We fix a natural number \(N \in \N_{\geq 1}\), and consider the complex vector space of \textsl{differential polynomials}
\[
V\bydef \C\big[(X^{(k)})_{k \in N}]
\]
in the formal variables \(X^{(k)}\bydef (X_{0}^{(k)}, \dotsc, X_{N}^{(k)})\). An element in this vector space is usually called a \textsl{differential polynomial} (in the variables \(X=(X_{0}, \dotsc, X_{N})\)). The upper index in parenthesis is reminiscent of a formal derivation, for the following reason. Consider a formal variable \(T\), and for any differential polynomial \(P \in V\) and any polynomial \(Q \in \C[T]\), form a new polynomial \(Q \cdot P \in V[T]\) by setting
\[
Q\cdot P
\bydef 
P\big((QX)^{(0)}, (QX)^{(1)}, \dotsc\big).
\]
Here, for any \(k \in \N\), the symbol \((QX)^{(k)}\) is by definition:
\[
(QX)^{(k)}
\bydef
\sum\limits_{i=0}^{k}
\binom{k}{i}
Q^{(k-i)}X^{(i)}.
\]
Namely, one applies formally the usual Leibnitz rule.
There are two natural notions of homogeneity on \(V\). The first one is the usual one: a differential polynomial \(P \in V\) is \textsl{homogeneous of degree \(d\)} if and only if  the following equality holds for any \(\lambda \in \C\):
\[
\lambda \cdot P
=
\lambda^{d}
P.
\]
The second notion of homogeneity, called \textsl{differential homogeneity}, is defined as follows. A differential polynomial \(P \in V\) is called \textsl{differentially homogeneous of degree \(d\)} if and only if, for any polynomial \(Q \in \C[T]\), the following equality holds:
\[
Q \cdot P
=
Q^{d}
P.
\]
Accordingly, a differentially homogeneous polynomial of degree \(d\) is necessarily homogeneous of degree \(d\). Denote by 
\[
V^{\Diff} \subset V
\]
the sub-vector space of differentially homogeneous polynomials.

The vector space \(V\) is naturally filtered by the maximal order of derivation
\[
(0) \subset V^{(0)} \subset V^{(1)} \subset \dotsb \subset V^{(k)} \subset \dotsb,
\]
where for any \(k \in \N\), one defines \(V^{(k)}\bydef \C\big[(X^{(i)})_{0 \leq i \leq k}]\). Furthermore, each piece in the filtration admits a natural and compatible graduation given by the degree of homogeneity
\[
V^{(k)}
=
\bigoplus_{d \in \N}
V^{(k)}_{d},
\]
where \(V^{(k)}_{d}\) is the set of differential polynomials in \(V^{(k)}\) that are homogeneous of degree \(d\).
Both the filtration and the graduation descend to the sub-vector space \(V^{\Diff}\), and we will denote by
\[
(V^{\Diff}_{d})^{(k)}
\]
the sub-vector space of differentially homogeneous polynomials in \(V^{(k)}_{d}\).

We are now ready to state the so-called \textsl{Schmidt--Kolchin conjecture}:
\begin{conjecture}[Schmidt--Kolchin]
The dimension of the vector space \(V^{\Diff}_{d}\) of differentially homogeneous polynomials of degree \(d\) in the variables \(X=(X_{0}, \dotsc, X_{N})\) is equal to \((N+1)^{d}\).
\end{conjecture}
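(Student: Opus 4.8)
The plan is to reduce differential homogeneity to a \emph{linear} condition, then to bracket $\dim V^{\Diff}_d$ from below by $(N+1)^d$ via explicit Wronskian-type invariants and from above by $(N+1)^d$ via a representation-theoretic computation.

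\textbf{Step 1: linearisation.} Reading $X^{(k)}$ as $k!$ times the $k$-th Taylor coefficient of a curve germ $X(T)\in\C^{N+1}$, the polynomial $Q\cdot P$ is $P$ evaluated on the reparametrised curve $Q(T)X(T)$. Replacing $Q(T)$ by $Q(T+t_0)$ reduces the defining identity $Q\cdot P=Q^{d}P$ (an identity in $V[T]$) to its value at $T=0$ for all $Q\in\C[T]$; writing then $Q=Q(0)\cdot(1+\text{higher order})$ one obtains: $P\in V^{\Diff}_d$ if and only if $P$ is homogeneous of degree $d$ and invariant under the unipotent abelian group $U\bydef 1+T\,\C[[T]]$ acting on curves by multiplication. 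Passing to divided-power variables $Y^{(k)}_i\bydef X^{(k)}_i/k!$, this invariance is the system of commuting linear differential equations $\Delta_j P=0$ $(j\ge 1)$, where $\Delta_j\bydef\sum_{i=0}^{N}\sum_{k\ge j}Y^{(k-j)}_i\,\partial_{Y^{(k)}_i}$ generate the Lie algebra of $U$. One also checks a priori that a nonzero element of $V^{\Diff}_d$ has order at most $d-1$, so only finitely many $\Delta_j$ intervene; setting $W\bydef\C^{N+1}$ and $E_m\bydef\C[T]/T^{m+1}$ one gets $V^{\Diff}_d\cong\bigl(\mathrm{Sym}^{d}(W^{*}\otimes E_m)\bigr)^{U}$ for all $m\ge d-1$, with $U$ acting on the factor $E_m$.

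\textbf{Step 2: lower bound.} Wronskian determinants are differentially homogeneous: for $i_1<\dots<i_c$ the $c\times c$ minor $W_{i_1\cdots i_c}\bydef\det\bigl(Y^{(b-1)}_{i_a}\bigr)_{1\le a,b\le c}$ satisfies $Q\cdot W_{i_1\cdots i_c}=Q^{c}\,W_{i_1\cdots i_c}$, since row operations on a determinant only see leading coefficients. To a pair $(\lambda,\mathcal T)$, with $\lambda\vdash d$ of length $\le N+1$ and $\mathcal T$ a standard Young tableau of shape $\lambda$, I would attach a ``generalised Wronskian'' obtained by Young-symmetrising the jet monomial in which the $a$-th tensor factor is differentiated a number of times read off from the position of $a$ in $\mathcal T$; each such polynomial lies in $V^{\Diff}_d$. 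The main task is to prove that, as $(\lambda,\mathcal T)$ varies, these span a space of dimension $\sum_{\lambda}(\dim\mathrm S^{\lambda}W)\,f^{\lambda}=(N+1)^{d}$, the last equality being Schur--Weyl duality for $W^{\otimes d}$; linear independence would be obtained by specialising the curve $Y(T)$ so that the relevant Wronskian matrices become triangular. This yields $\dim V^{\Diff}_d\ge(N+1)^d$.

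\textbf{Step 3: upper bound, and the main obstacle.} By the $\bigl(\GL(W),\GL(E_m)\bigr)$ Howe duality on $\mathrm{Sym}(W^{*}\otimes E_m)$ one has $\mathrm{Sym}^{d}(W^{*}\otimes E_m)=\bigoplus_{\ell(\lambda)\le N+1}\mathrm S^{\lambda}(W^{*})\otimes\mathrm S^{\lambda}(E_m)$ as $\GL(W)\times\GL(E_m)$-modules, and since $U\subset\GL(E_m)$ acts only on the second factor, Step 1 gives
\[
V^{\Diff}_d\;\cong\;\bigoplus_{\lambda\vdash d,\ \ell(\lambda)\le N+1}\mathrm S^{\lambda}(W^{*})\otimes\bigl(\mathrm S^{\lambda}(E_m)\bigr)^{U}.
\]
Everything therefore reduces to the single identity $\dim\bigl(\mathrm S^{\lambda}(E_m)\bigr)^{U}=f^{\lambda}$ for $m\gg 0$: the $U$-invariants in the Schur functor applied to the ``long Jordan block'' $E_m=\C[T]/T^{m+1}$ have dimension equal to the number of standard tableaux of shape $\lambda$. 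This is the heart of the proof and the step I expect to be the main obstacle. I would prove it by exhibiting inside $\bigl(\mathrm S^{\lambda}(E_m)\bigr)^{U}$ a basis indexed by standard tableaux: order monomials in a basis of $E_m$ by their ``valuation profile'', show that the top-valuation component of a $U$-invariant already determines it, and identify the admissible leading terms with standard tableaux. The subtlety is that $U$-invariance is strictly stronger than invariance under the single regular unipotent element $1+T$, so all the $\Delta_j$ (equivalently, the full uniserial $\C[[T]]^{\times}$-module structure of $E_m$, with the representation theory of $S_d$ entering through $f^\lambda=\dim M_\lambda$) must be used together. Combining Step 2 with this identity gives $\dim V^{\Diff}_d=(N+1)^{d}$, and as a bonus exhibits $V^{\Diff}_d\cong(W^{*})^{\otimes d}$ as a $\GL(W)$-representation.
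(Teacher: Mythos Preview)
Your overall architecture coincides with the paper's: a Wronskian lower bound, a reduction to one Schur piece per partition, and the target identity that this piece has dimension $f^{\lambda}$. Two calibration remarks. First, the ``a priori'' bound on the order of a differentially homogeneous polynomial is not available in advance; the paper obtains $V^{\Diff}_d=(V^{\Diff}_d)^{(d-1)}$ only as a corollary of the full theorem. This is harmless, since your Step~3 can be run at each fixed truncation level $m$. Second, your $U$-invariance condition and the paper's kernel condition are literally the same: in divided-power coordinates your $\Delta_j$ is the $j$-th Newton power sum in the commuting single-factor nilpotents $J_1,\dots,J_d$, whereas the paper's $J^{(\ell)}$ is (up to $\ell!$) the $\ell$-th elementary symmetric function in the same operators, and Newton's identities identify the two joint kernels. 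So your Howe-duality framework is a clean repackaging of the paper's explicit highest-weight-vector description, not a genuinely different reduction.

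The real divergence is in how the key identity $\dim\bigl(S^{\lambda}E_m\bigr)^{U}=f^{\lambda}$ is established. You propose a direct leading-term/valuation argument and correctly flag it as the main obstacle; as written it is a plan, not a proof. The paper does \emph{not} attack this identity head-on. Instead it bounds the \emph{total} kernel on $E_m^{\otimes d}$ by reinterpreting it as the solution space of the constant-coefficient PDE system $\bigl(\sum_i\partial_{x_i}^{\ell}\bigr)_{1\le\ell\le d}$; by Oberst's theorem this dimension equals the length of $\C[x_1,\dots,x_d]/(S_1,\dots,S_d)$, which is $d!$ by a B\'ezout computation. Summing over $\lambda$ gives $\sum_{\lambda}f^{\lambda}\cdot\dim\bigl(S^{\lambda}E_m\bigr)^{U}\le d!$; the lower bound gives $\dim\bigl(S^{\lambda}E_m\bigr)^{U}\ge f^{\lambda}$; and the Robinson--Schensted identity $\sum_{\lambda}(f^{\lambda})^{2}=d!$ squeezes these to equalities. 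Thus in the paper the lower bound is not a sanity check but a load-bearing half of the upper-bound argument, and one never computes $(S^{\lambda}E_m)^{U}$ in isolation. A further point you should anticipate in Step~2: extracting the per-$\lambda$ inequality $\ge f^{\lambda}$ from the Wronskian family requires knowing that its span is a $\GL_{N+1}$-subrepresentation, which is not automatic (the paper devotes an appendix to it); your alternative via Young-symmetrised jet monomials would need its own independence proof.
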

The origin of this questions goes back to Schmidt's paper \cite{Schmidt_1979}, where  he proved that the dimension is at least equal to \((N+1)^{d}\), and suggested that "Perhaps, equality is true here; no upper bound seems to be known at present". A few years later, Kolchin proved in \cite{Kolchin} that the conjecture holds for \(N=1\) and \(d=1,2,3\). Then, in \cite{Rein-Sit}, Reinhart and Sit showed that the following equality holds for \(N=1\):
\[
V^{\Diff}_{d}
=
(V^{\Diff}_{d})^{(d-1)}.
\]
In particular, this implies that the vector space of differentially homogeneous polynomials of a given degree \(d \in \N_{\geq 1}\) in the variables \(X=(X_{0}, X_{1})\) is finite dimensional. Three years later, Reinhart improved the previous result, and solved the Schmidt--Kolchin conjecture for \(N=1\) in \cite{Reinhart_1999}. Since then, no progress has been made towards this conjecture. It is not even known if the vector space \(V_{d}^{\Diff}\) is finite dimensional when \(N>1\).

In this paper, we solve the Schmidt--Kolchin conjecture, and prove the following (see also Theorem \ref{thm: mainthm1 corpus}):
\begin{theorem}
\label{thm: mainthm1}
The dimension of \(V^{\Diff}_{d}\) is equal to \((N+1)^{d}\). Furthermore, an explicit basis of this vector space is provided, and one deduces accordingly the equality
\[
V^{\Diff}_{d}
=
(V^{\Diff}_{d})^{(d-1)}.
\]
\end{theorem}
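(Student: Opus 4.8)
The plan is to convert differential homogeneity into a finite system of linear differential conditions, and then to compute the resulting space representation-theoretically. First I would observe that, for a differential polynomial $P$ of order at most $k$, the defining identities $Q\cdot P=Q^{d}P$ (as $Q$ ranges over $\C[T]$) amount --- after specialising the formal variable $T$ to scalars, which loses nothing by density --- to the requirement that $P$ be a semi-invariant, of character $q\mapsto q_{0}^{d}$, for the honest linear action of the jet-of-units group $G_{k}=(\C[T]/T^{k+1})^{\times}$ on $V^{(k)}$. Since $G_{k}\cong\C^{\ast}\times\C^{k}$ is connected, this is equivalent to its infinitesimal form: $P$ is homogeneous of degree $d$ and $E_{j}P=0$ for $1\le j\le k$, where $E_{j}$ is the degree-preserving derivation of $V$ with $E_{j}X^{(i)}_{\ell}=\binom{i}{j}X^{(i-j)}_{\ell}$. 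Thus $V^{\Diff}_{d}=\{\text{homogeneous of degree }d\}\cap\bigcap_{j\ge 1}\Ker E_{j}$; and because each $E_{j}$ strictly lowers the order weight $\sum_{k}k\cdot(\text{number of factors }X^{(k)})$, the space $V^{\Diff}_{d}$ is automatically graded by this weight.

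Next I would bring in the $\GL_{N+1}$-symmetry by writing $V=\mathrm{Sym}\big(\C^{N+1}\otimes\C[s]\big)$, where the tensor factor $\C[s]$ records the order of derivation ($s^{i}\leftrightarrow X^{(i)}$); then $V_{d}=\mathrm{Sym}^{d}$, the finite truncations $V^{(k)}_{d}$ correspond to replacing $\C[s]$ by $\C[s]_{\le k}$, and $E_{j}$ is the derivation extending $\mathrm{id}_{\C^{N+1}}\otimes\tfrac{1}{j!}\partial_{s}^{j}$, which touches only the second tensor factor. The Cauchy decomposition
\[
\mathrm{Sym}^{d}\big(\C^{N+1}\otimes\C[s]\big)\;=\;\bigoplus_{\lambda\vdash d}\mathbb{S}_{\lambda}(\C^{N+1})\otimes\mathbb{S}_{\lambda}(\C[s])
\]
is $\GL_{N+1}$-equivariant and respects the $E_{j}$, so
\[
V^{\Diff}_{d}\;\cong\;\bigoplus_{\lambda\vdash d}\mathbb{S}_{\lambda}(\C^{N+1})\otimes M^{\lambda},\qquad M^{\lambda}\bydef\mathbb{S}_{\lambda}(\C[s])\cap\bigcap_{j\ge 1}\Ker E_{j},
\]
only those $\lambda$ with $\ell(\lambda)\le N+1$ contributing, since $\mathbb{S}_{\lambda}(\C^{N+1})=0$ otherwise. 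The problem is thereby reduced to showing $\dim M^{\lambda}=f^{\lambda}$, the number of standard Young tableaux of shape $\lambda$, for every $\lambda\vdash d$.

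This last point is what I expect to be the main obstacle. On $\C[s]$ the operator $\partial_{s}$ is a single Jordan block (a principal nilpotent), so $M^{\lambda}$ is the set of vectors of the Schur module $\mathbb{S}_{\lambda}(\C[s])$ annihilated simultaneously by all the induced ``lower the fillings by $j$'' operators; working with the semistandard basis of $\mathbb{S}_{\lambda}(\C[s])$ and the explicit action of the $E_{j}$ on it, I would prove the bound $\dim M^{\lambda}\le f^{\lambda}$. For the matching lower bound $\dim M^{\lambda}\ge f^{\lambda}$ I would exhibit $f^{\lambda}$ independent explicit elements, ultimately producing a full explicit basis of $V^{\Diff}_{d}$ built from ``generalised Wronskians'': for a column of length $r$ and indices $\ell_{1}<\dots<\ell_{r}$ in $\{0,\dots,N\}$, the determinant $\det\big(X^{(i-1)}_{\ell_{j}}\big)_{1\le i,j\le r}$ is differentially homogeneous of degree $r$ --- under $X\mapsto QX$ every column is acted on by one and the same unipotent lower-triangular matrix of determinant $Q^{r}$ --- and appropriate linear combinations of products of such determinants over the columns of $\lambda$, the products indexed by semistandard tableaux of shape $\lambda$ and the combinations by standard tableaux, realise the isomorphism displayed above.

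Granting $\dim M^{\lambda}=f^{\lambda}$, Schur--Weyl duality gives
\[
\dim V^{\Diff}_{d}\;=\;\sum_{\lambda\vdash d}f^{\lambda}\dim\mathbb{S}_{\lambda}(\C^{N+1})\;=\;\dim\big(\C^{N+1}\big)^{\otimes d}\;=\;(N+1)^{d};
\]
moreover every generalised Wronskian occurring in the basis involves derivatives only up to order $(\text{longest column of }\lambda)-1$, which is at most $d-1$, so the equality $V^{\Diff}_{d}=(V^{\Diff}_{d})^{(d-1)}$ follows at once. In particular the explicit basis recovers, and sharpens, Schmidt's lower bound.
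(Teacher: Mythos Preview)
Your framework is essentially the one the paper uses: the Cauchy decomposition of \(V_{d}^{(k)}=\mathrm{Sym}^{d}(\C^{N+1}\otimes\C^{k+1})\) is exactly the highest-weight decomposition of Section~\ref{subs: hw vectors}, and your operators \(E_{j}=\sum_{i}\tfrac{1}{j!}(\partial_{s}^{\,j})_{i}\) are the Newton power sums in the single-factor nilpotents, hence cut out the same locus as the paper's \(J^{(\ell)}\) (which are the elementary symmetric functions in those nilpotents). So the reduction to proving \(\dim M^{\lambda}=f^{\lambda}\) is correct and matches the paper.

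The genuine gap is the upper bound. You write only that ``working with the semistandard basis \(\ldots\) I would prove \(\dim M^{\lambda}\le f^{\lambda}\)''. This is precisely the hard step, and no direct combinatorial argument on the semistandard basis is known; the paper does \emph{not} bound each \(M^{\lambda}\) separately. Instead it bounds the total: identifying \(\bigcap_{\ell}\Ker J^{(\ell)}\subset(\C^{k+1})^{\otimes d}\) with polynomial solutions of the PDE system attached to the ideal \((S_{1},\dotsc,S_{d})\subset\C[X_{1},\dotsc,X_{d}]\), it invokes the Ehrenpreis--Palamodov/Oberst theorem together with a B\'ezout-type length computation to get \(\dim\bigcap_{\ell}\Ker J^{(\ell)}\le d!\); then \(\Sigma_{d}\)-equivariance gives \(\sum_{\lambda}f^{\lambda}\dim M^{\lambda}\le d!\), and the squeeze against the lower bound \(\dim M^{\lambda}\ge f^{\lambda}\) plus \(\sum_{\lambda}(f^{\lambda})^{2}=d!\) forces equality. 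Without this global argument (or a replacement for it), your sketch does not close.

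Your lower-bound construction is also insufficient as stated. Products over the columns of \(\lambda\) of the ``standard'' Wronskians \(\det\big(X_{\ell_{j}}^{(i-1)}\big)_{1\le i,j\le r}\) are indeed differentially homogeneous, but as the \(\ell\)'s vary they span the image of \(\bigotimes_{c}\Lambda^{\lambda'_{c}}(\C^{N+1})\to V_{d}^{\Diff}\), which contains \(\mathbb{S}_{\lambda}(\C^{N+1})\) with multiplicity \emph{one}; no linear combination of such products can produce the required \(f^{\lambda}\) independent copies when \(f^{\lambda}>1\). The paper's explicit family (Proposition~\ref{prop: family}) uses the more elaborate Wronskians \(\Wronsk(t^{\alpha_{1}}X_{n_{1}},\dotsc,t^{\alpha_{d}}X_{n_{d}})\), and a separate argument (Section~\ref{subs: invariance} and Appendix~\ref{appendix: A}) is needed to show that their span is \(\GL_{N+1}\)-stable, which is what yields the character \((x_{0}+\dotsb+x_{N})^{d}\) and hence the lower bound \(\dim M^{\lambda}\ge f^{\lambda}\).
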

In a few words, the strategy of the proof is as follows (for more details, see Organization of the paper). One easily sees (see the beginning of Section \ref{sect: Schmidt-Kolchin}) that there is a natural action of the general linear group on \(V_{d}\), which descends to \(V_{d}^{\Diff}\). Fixing \(k \in \N\) a natural number, this makes \((V_{d}^{\Diff})^{(k)}\) into a sub-representation of \(V_{d}^{(k)}\). There is a natural basis of the vector space \(V_{d, \hw}^{(k)}\) of \textsl{highest weight vectors} of \(V_{d}^{(k)}\) (see Section \ref{subs: linear} for definitions, and Section \ref{subs: hw vectors}). Therefore, the problem amounts to understanding which suitable linear combinations of elements in this basis give differentially homogeneous polynomials. The key part in the proof lies in a suitable algebraic formulation of this problem, and the way it is tackled (see Section \ref{subs: pure algebraic} and Section \ref{subs: relate}).

Beside the intrinsic interest one may have in such a question, our motivation was rather geometric. It turns out that there is one-to-one correspondance between differentially homogeneous polynomial in the variables \(X=(X_{0}, \dotsc, X_{N})\) and \textsl{(positively) twisted jet differentials} on the projective space \(\P^{N}\) (see Section \ref{subs: GG} for definition of jet differentials on complex manifolds, and see Proposition \ref{prop: link}).
As a corollary of our study on differentially homogeneous polynomials, we obtain the second main theorem of this paper (see Section \ref{subs: GG} for notations and definitions):
\begin{theorem}
\label{thm: mainthm2}
Let \(d \in \Z\), \(k \geq 0\) and \(n \geq 0\) be natural numbers. Then:
\begin{enumerate}
\item{} for any \(k \geq d-1\) and any \(n \in \N\), one has the isomorphism
 \[
 H^{0}(\P^{N}, E_{k,n}\P^{N}(d)) \simeq H^{0}(\P^{N}, E_{d-1,n}\P^{N}(d));
 \]
 \item{} the following equality holds:
\[
\sum\limits_{n=0}^{\infty} \dim H^{0}(\P^{N},E_{d-1,n}\P^{N}(d)) = (N+1)^{d};
\]
\item{} for any \(k \in \N\) and any \(n > \lfloor(1-\frac{1}{N+1})\frac{d^{2}}{2}\rfloor\), one has the vanishing: 
\[
H^{0}(\P^{N}, E_{k,n}\P^{N}(d))=(0).
\]
\end{enumerate}
\end{theorem}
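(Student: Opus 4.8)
The plan is to read off all three statements from Theorem~\ref{thm: mainthm1} via the dictionary of Proposition~\ref{prop: link}. Recall from that proposition that the correspondence between differentially homogeneous polynomials and twisted jet differentials on $\P^{N}$ matches the degree $d$ with the twist $\O(d)$, the order of derivation with the order $k$ of the jet bundle, and the weighted degree (weight $j$ assigned to $X_{i}^{(j)}$) with the weighted degree $n$; in other words $H^{0}(\P^{N},E_{k,n}\P^{N}(d))\simeq (V^{\Diff}_{d})^{(k)}_{[n]}$, the weighted-degree-$n$ piece of $(V^{\Diff}_{d})^{(k)}$. The $\GL_{N+1}$-action on $V$ (linear substitution in the $X_{i}$, applied simultaneously to all $X_{i}^{(j)}$) preserves both the order filtration and the weighted grading, and each $(V^{\Diff}_{d})^{(k)}_{[n]}$ is finite dimensional since there are only finitely many monomials of given degree and weighted degree; thus $(V^{\Diff}_{d})^{(k)}=\bigoplus_{n}(V^{\Diff}_{d})^{(k)}_{[n]}$ as $\GL_{N+1}$-representations.

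Parts~(1) and (2) are then immediate. By Theorem~\ref{thm: mainthm1} we have $V^{\Diff}_{d}=(V^{\Diff}_{d})^{(d-1)}$, and since the order filtration is increasing this forces $(V^{\Diff}_{d})^{(k)}=(V^{\Diff}_{d})^{(d-1)}$ for every $k\geq d-1$; intersecting with the (preserved) weighted-degree-$n$ piece gives $(V^{\Diff}_{d})^{(k)}_{[n]}=(V^{\Diff}_{d})^{(d-1)}_{[n]}$, which is the isomorphism in (1). Summing over $n$ and using that the weighted grading splits the finite-dimensional space $V^{\Diff}_{d}=(V^{\Diff}_{d})^{(d-1)}$ yields
\[
\sum_{n=0}^{\infty}\dim H^{0}(\P^{N},E_{d-1,n}\P^{N}(d))=\sum_{n=0}^{\infty}\dim (V^{\Diff}_{d})_{[n]}=\dim V^{\Diff}_{d}=(N+1)^{d},
\]
which is (2).

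For (3), by part~(1) it is enough to show $(V^{\Diff}_{d})_{[n]}=0$ whenever $n>\lfloor(1-\tfrac{1}{N+1})\tfrac{d^{2}}{2}\rfloor$; equivalently, that no differentially homogeneous polynomial of degree $d$ in $N+1$ variables has weighted degree larger than this bound. Here I would use the explicit basis of $V^{\Diff}_{d}=(V^{\Diff}_{d})^{(d-1)}$ produced by Theorem~\ref{thm: mainthm1}: each basis vector is weighted-homogeneous and lies in a single $\GL_{N+1}$-irreducible summand, of highest weight some partition $\lambda\vdash d$ with at most $N+1$ parts, and its weighted degree is read off the combinatorial label of the basis — concretely, the copies of the irreducible $\S^{\lambda}$ occur in the weighted degrees given by the major index over standard tableaux of shape $\lambda$, reflecting the identification of $V^{\Diff}_{d}$ with $(\C^{N+1})^{\otimes d}$ as a $\GL_{N+1}$-representation. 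In particular such a basis vector has weighted degree at most $\binom{d}{2}-\sum_{j}\binom{\lambda_{j}}{2}$. Since $t\mapsto\binom{t}{2}$ is convex, $\sum_{j}\binom{\lambda_{j}}{2}$ is minimized, over partitions $\lambda\vdash d$ with $\ell(\lambda)\leq N+1$, by the balanced partition (parts differing by at most one), and a direct computation then gives
\[
\binom{d}{2}-\min_{\substack{\lambda\vdash d\\ \ell(\lambda)\leq N+1}}\ \sum_{j}\binom{\lambda_{j}}{2}\ \leq\ \Big\lfloor\Big(1-\tfrac{1}{N+1}\Big)\tfrac{d^{2}}{2}\Big\rfloor ,
\]
with equality when $(N+1)\mid d$; this gives the stated vanishing.

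Parts (1) and (2) are thus formal once Proposition~\ref{prop: link} and Theorem~\ref{thm: mainthm1} are granted. The substance is in (3): one has to track, for every irreducible constituent of $V^{\Diff}_{d}$, the exact range of weighted degrees in which it occurs, which forces a careful use of the shape of the basis constructed in Section~\ref{subs: relate}. Extracting the weighted degree from the combinatorial data indexing that basis is the step I expect to require the most care; once that is in hand, the optimization over partitions leading to the displayed inequality is elementary.
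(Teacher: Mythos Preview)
Your argument for parts~(1) and~(2) matches the paper's exactly: both follow formally from Proposition~\ref{prop: link} and Theorem~\ref{thm: mainthm1 corpus} once one notes that the weighted grading is preserved by the $\GL_{N+1}$-action.

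For part~(3), however, your route diverges from the paper's and contains a gap. The paper does not organize the canonical basis $(W_{\mathcal{P}})_{\mathcal{P}}$ by irreducible $\GL_{N+1}$-summands; those basis elements are merely $\GL_{N+1}$-\emph{weight} vectors (of weight the multiplicity vector $\mathbi{m}$), and nothing in the paper shows they lie in a single irreducible constituent. Your assertion that the copies of $S^{\mathbi{\lambda}}$ sit in weighted degrees given by major indices of standard tableaux is not established anywhere (the isomorphism $V^{\Diff}_d\simeq(\C^{N+1})^{\otimes d}$ is only as $\GL_{N+1}$-modules, not as graded objects), and you yourself flag this as the step ``requiring the most care''. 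As written, the argument is incomplete.

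The paper's approach (Proposition~\ref{prop: isobar}) is more direct and avoids the irreducible decomposition entirely at this step. One computes, straight from the determinantal definition, that $W_{\mathcal{P}}$ is isobaric of jet-weight $\binom{d}{2}-|\mathbi{\alpha}|$. Since for each $\ell$ the entries $\alpha_{i_\ell,1}<\dotsb<\alpha_{i_\ell,m_{i_\ell}}$ are distinct non-negative integers, one has $|\mathbi{\alpha}|\geq\sum_{\ell}\binom{m_{i_\ell}}{2}$, so the jet-weight is at most $\binom{d}{2}-\sum_{i}\binom{m_i}{2}$ with $\mathbi{m}$ the \emph{multiplicity vector} of the data $\mathcal{P}$ (not a highest weight). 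Your final convexity optimization over balanced $(N+1)$-part compositions is then precisely what the paper does to reach the bound $\lfloor(1-\tfrac{1}{N+1})\tfrac{d^{2}}{2}\rfloor$. So the fix is simple: replace the unjustified claim about irreducible summands and major indices by this elementary lower bound on $|\mathbi{\alpha}|$ for fixed $\mathbi{m}$, and the rest of your argument goes through verbatim.
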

It should be noted that the previous Theorem \ref{thm: mainthm2} does not encapsulate all the information we have on twisted jet differentials on projective spaces. Indeed, since differentially homogeneous polynomials are explicitly understood, so are these twisted jet differentials (see Proposition \ref{prop: link}). Whereas, being given a differentially homogeneous in the \textsl{canonical basis} of \(V^{\Diff}\) (see Section \ref{subs: construction}), it is easy to determine which global section it defines (see Proposition \ref{prop: isobar}), it seems to be quite difficult to count the number of differentially homogeneous polynomials falling into a same class. Such a purely combinatoric problem might be the object of a future work, allowing to obtain closed formulas for the dimension of the space of global sections of Green--Griffiths bundles of projective spaces.

To finish this introductory part, we would like to mention another motivation. As any algebraic geometer knows, to any graded ring \(S\) is associated the scheme \(\Proj(S)\), which is of particular interest if \(S\) is furthermore finitely generated. It is therefore very natural to study, for any \(k \in \N\), the (bigger and bigger as \(k\) grows) scheme 
\[
\Proj\big(\bigoplus_{d \geq 0} (V_{d}^{\Diff})^{(k)}\big),
\]
along with its closed subschemes. This is the object of an ongoing work.
\subsection*{Organization of the paper.}
The paper is organized as follows.

Section \ref{sect: preliminaries} is devoted to preliminaries.

In Section \ref{subs: representation}, we recall some classic facts on representation theory of symmetric groups and general linear groups, allowing along the way to fix notations. 

In Section \ref{subs: PDE}, we recall a correspondance between systems of zero-dimensional partial differential equations with constant coefficients on the one hand, and algebraic varieties on the other hand. Such a correspondance allows to evaluate the number of independent solutions of a system of PDE's via methods coming from intersection theory.

Section \ref{sect: Schmidt-Kolchin} is devoted to the proof of the Schmidt--Kolchin conjecture.

In Section \ref{subs: family}, following \cite{Reinhart_1999}, we introduce, for each integer \(d \geq 0\), a natural sub-family \(\overline{V^{\Diff}_{d}} \subset V^{\Diff}_{d}\) of differentially homogeneous polynomials of degree \(d\), which is shown to be \((N+1)^{d}\)-dimensional.\footnote{As indicated in the Introduction, the lower bound
\[
\dim(V^{\Diff}_{d}) \geq (N+1)^{d}
\]
was already proved by Schmidt in \cite{Schmidt_1979}. However, he did not prove this result by exhibiting an explicit free family of differentially homogeneous polynomials.}
The main result of this Section \ref{subs: family} is the following: we show that \(\overline{V_{d}^{\Diff}}\) is actually a sub-representation of \(V_{d}^{\Diff}\) for the natural action of the general linear group.\footnote{Believing in the Schmidt--Kolchin conjecture, this is the least one could expect.} As a crucial corollary, one deduces a lower bound on the dimension of \(\mathbi{\lambda}\)-highest weight vectors in \(V^{\Diff}_{d}\), for \(\mathbi{\lambda} \vdash d\) a partition of \(d\) with at most \((N+1)\) parts:
\begin{equation}
\label{eq: eqintro1}
\dim V^{\Diff}_{\hw(\mathbi{\lambda})}
\geq
f_{\mathbi{\lambda}}.
\end{equation}

In Section \ref{subs: hw vectors}, we fix \(k \in \N_{\geq 0}\) an integer, and provide a natural basis for the vector space
\[
V_{d, \hw}^{(k)}
\]
of highest weight vectors in the finite-dimensional representation \(V_{d}^{(k)}\).

In Section \ref{subs: pure algebraic}, we study a purely algebraic problem, which consists in the evaluation of the dimension of the space of vectors lying simultaneously in the kernel of a family of particular endomorphisms of a fixed finite-dimensional vector space. We feel that one of the main originality of this paper lies in the way we tackled this problem, as we crucially used the correspondance between systems of PDE's and algebraic varieties, briefly described above.

In Section \ref{subs: relate}, we provide a set of equations characterizing highest weight vectors of \(V_{d}^{(k)}\) that are differentially homogeneous. The technical part of this Section \ref{subs: relate} amounts to relate this set of equations to the previous algebraic problem. At this stage, the small miracle is that the dependency on \(N\) and \(k\) (almost) vanishes. The outcome of this Section \ref{subs: relate} is that we have an upper bound for the number of independent \(\mathbi{\lambda}\)-highest weight vector in \((V_{d}^{\Diff})^{(k)}\) (where \(\mathbi{\lambda}\vdash d\) is partition with at most \((N+1)\) parts), which depends only on \(d\).\footnote{Accordingly, at this stage, one has already proved that \(V_{d}^{\Diff}\) is finite dimensional.}

In Section \ref{subs: proof}, we finish the proof of the Schmidt--Kolchin conjecture, and to this end, the lower bound \eqref{eq: eqintro1} turns out to be crucial.

Section \ref{sect: diff jets} is devoted to establishing the correspondance between differentially homogeneous polynomials and twisted jet differentials on projective spaces, and infer applications from the previous study.

In Section \ref{subs: GG}, we recall the construction of Green--Griffiths bundles on complex manifolds.

In Section \ref{subs: link}, we establish the correspondance between differentially homogeneous polynomials, and twisted jet differentials on projective spaces.

In Section \ref{subs: H^0 GG}, we study the \(0\)th cohomology group (i.e. the vector space of global sections) of twisted Green--Griffiths bundles via the acquired knowledge on differentially homogeneous polynomials.

\begin{ack}
I would like thank Gleb Pogudin, as I learnt from him the terminology of differential homogeneity, as well as the Schmidt--Kolchin conjecture. It turns out that I had used the defining property of differential homogeneity in a previous paper, and called it \textsl{geometric}, out of ignorance. This is precisely the geometric implications of the Schmidt--Kolchin conjecture that motivated me to tackle it.

I also would like to thank the Institut de Mathématiques de Toulouse (IMT) for providing a very comfortable and stimulating working environment. And of course, I am very grateful to the CIMI LabEx for giving me the opportunity to work here.
\end{ack}

\section{Preliminaries.}
\label{sect: preliminaries}
In this Section \ref{sect: preliminaries}, we gather a few results that will be used during our study of differentially homogeneous polynomials. These results come from representation theory on the one hand (Section \ref{subs: representation}), and systems of partial differential equations on the other hand (Section \ref{subs: PDE}).

\subsection{Representation theory for symmetric groups and general linear groups.}
\label{subs: representation}
The references for this section are \cite{Fulton}, \cite{FultonHarris} and \cite{Boerner}.
\subsubsection{Partitions, Young tableaux, and some combinatorial identities.}
Recall that a \textsl{partition \(\mathbi{\lambda}\) of a natural number \(d\geq 1\)} is simply a finite, decreasing sequence of positive natural numbers whose sum equals to \(d\).
To any partition \(\mathbi{\lambda}\) with \(s\) parts
\[
\mathbi{\lambda}=(\lambda_{1}\geq\dotsb\geq\lambda_{s}>0)
\]
is associated a \textsl{Young diagram of shape \(\mathbi{\lambda}\)}, which is a collection of cells arranged in left-justified rows: the first row contains \(\lambda_{1}\) cells, the second \(\lambda_{2}\), and so on (cf. Figure~\ref{fig1} for an example).
\begin{figure}[h]
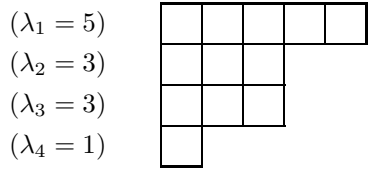

  \begin{center}
    \small
    \ytableausetup{centertableaux}
    \begin{ytableau}
      \none[(\lambda_{1}=5)]&\none[]&\none[]&&&&&&\none[\tikznode{s3}{~}]
      \\
      \none[(\lambda_{2}=3)]&\none[]&\none[]&&&
      \\
      \none[(\lambda_{3}=3)]&\none[]&\none[]&&&
      \\
      \none[(\lambda_{4}=1)]&\none[]&\none[]& & \none[]& \none[] & \none[]& \none[]& \none[\tikznode{s1}{~}]
    \end{ytableau}
    \caption{Young diagram of the partition \(\lambda=(5,3,3,1)\).}
    \label{fig1}
  \end{center}
\end{figure}

Fix \(n \in \N\) a natural number, and \(\mathcal{T}\) a Young diagram of shape \(\mathbi{\lambda}\). A \textsl{Young tableau filled with the set \(\Set{1, \dotsc, n}\)} is, by definition, a filling \(T\) of the boxes of the Young diagram \(\mathcal{T}\) with the numbers \(\Set{1, \dotsc, n}\). Furthermore, if one imposes that 
\begin{itemize}
\item{} browsing from left to right a line of the diagram, the sequence of numbers is non-decreasing;
\item{} browsing from top to bottom a column of the diagram, the sequence of numbers is increasing.
\end{itemize}
then such a Young tableau is called a \textsl{semi-standard Young tableau of shape \(\mathbi{\lambda}\), filled with the set \(\Set{1, \dotsc, n}\)}. In the case where \(n=d\), and where one imposes furthermore that each number \(1 \leq k \leq d\) occurs exactly once in the tableau, then the tableau \(T\) is called a \textsl{standard tableau of shape \(\mathbi{\lambda}\)}.

One is naturally lead to consider the following (combinatorial) numbers:
\begin{itemize}
\item{}
 \(f_{\mathbi{\lambda}} \bydef \big|\big\{\text{ \(T\) standard Young tableau of shape \(\mathbi{\lambda}\)\big\}} \big|\);
\item{}
\(d_{\mathbi{\lambda}}(n) \bydef \big|\big\{\text{ \(T\) semi-standard Young tableau of shape \(\mathbi{\lambda}\), filled with \(\Set{1, \dotsc, n}\)\big\}} \big|\),
\end{itemize}
Furthermore, for a fixed \(n\)-uple \(\mathbi{a}=(a_{1}, \dotsc, a_{n}) \in \N^{n}\), with \(\abs{\mathbi{a}}=\abs{\mathbi{\lambda}}=d\), define the so-called \textsl{Kostka numbers}:
\[
K_{\mathbi{\lambda}, \mathbi{a}}
\bydef
 \big|\big\{\text{ \(T\) semi-standard Young tableau of shape \(\mathbi{\lambda}\), filled with \(a_{1}\) \(1\)'s ,\(\dotsc\), \(a_{n}\) \(n\)'s}\big\}\big|.
\]
Note that, from the very definition of Kostka numbers, the following equality holds:
\begin{equation}
\label{eq: Kotska}
d_{\mathbi{\lambda}}(n)
=
\sum\limits_{\abs{\mathbi{a}}=d}
K_{\mathbi{\lambda}, \mathbi{a}}.
\end{equation}

The \textsl{Robinson--Schensted--Knuth correspondance} (see e.g. \cite{Fulton}[I.4]) allows to obtain the following identities:
\begin{proposition}
\label{prop: RSK}
The following equalities hold for any \(d \geq 1\) and any \(n \geq 1\):
\begin{itemize}
\item{} \(d!=\sum\limits_{\mathbi{\lambda} \vdash d} f_{\mathbi{\lambda}}^{2} \);
\item{} \(n^{d}=\sum\limits_{\mathbi{\lambda} \vdash d} f_{\mathbi{\lambda}}d_{\mathbi{\lambda}}(n)\).
\end{itemize}
Here, the symbol \(\mathbi{\lambda} \vdash d\) means that \(\mathbi{\lambda}\) is a partition of the natural number \(d\). 
\end{proposition}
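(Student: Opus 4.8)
The plan is to deduce both identities from the Robinson--Schensted--Knuth correspondence, in its two standard incarnations (see \cite{Fulton}[I.4]).

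For the first identity, I would invoke the classical Robinson--Schensted correspondence, which produces a bijection
\[
S_{d} \;\xrightarrow{\ \sim\ }\; \bigsqcup_{\mathbi{\lambda} \vdash d} \Set{(P,Q) \suchthat P, Q \text{ standard of shape } \mathbi{\lambda}}.
\]
Concretely, given a permutation \(\sigma\), one reads off the word \(\sigma(1), \dotsc, \sigma(d)\) and successively row-inserts (Schensted bumping) its letters to build the insertion tableau \(P\); the recording tableau \(Q\) registers, at the \(i\)-th step, the box newly created by labelling it with \(i\). One checks that \(P\) is standard because the inserted letters are pairwise distinct and bumping preserves strict increase along rows and columns, and that \(Q\) is standard because exactly one box is added per step, in such a way that the recorded labels increase along rows and columns. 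Invertibility is the reverse row-insertion procedure. Comparing cardinalities yields \(d! = \sum_{\mathbi{\lambda}\vdash d} f_{\mathbi{\lambda}}^{2}\). (Alternatively, this is nothing but the decomposition of the regular representation of \(S_{d}\), once one knows that the irreducible complex representation of \(S_{d}\) attached to \(\mathbi{\lambda}\) has dimension \(f_{\mathbi{\lambda}}\).)

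For the second identity, I would use Knuth's generalization of the correspondence to words: it gives a bijection between the set \(\Set{1,\dotsc,n}^{d}\) of length-\(d\) words on an \(n\)-letter alphabet and the set of pairs \((P,Q)\) where \(P\) is semi-standard of some shape \(\mathbi{\lambda}\vdash d\) filled with entries from \(\Set{1,\dotsc,n}\), and \(Q\) is standard of the same shape \(\mathbi{\lambda}\). The insertion tableau \(P\) is built exactly as above by row-inserting the letters of the word; since the letters may now repeat, \(P\) is only semi-standard, but \(Q\) remains standard because one still adds a single box per letter read. Counting: the left-hand side has \(n^{d}\) elements, and for a fixed \(\mathbi{\lambda}\) the pairs number \(d_{\mathbi{\lambda}}(n)\,f_{\mathbi{\lambda}}\); summing over \(\mathbi{\lambda}\vdash d\) gives \(n^{d} = \sum_{\mathbi{\lambda}\vdash d} f_{\mathbi{\lambda}}\, d_{\mathbi{\lambda}}(n)\).

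The content here is entirely classical, so there is no genuine obstacle: the only points requiring care are the verification that the insertion and recording algorithms output tableaux with the asserted (semi-)standardness, and that reverse row-insertion inverts the construction --- in particular that in the word version the recording tableau is genuinely \emph{standard}, which is exactly what forces its shape to be a partition of \(d\) and makes the bijective count come out right.
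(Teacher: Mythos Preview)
Your proposal is correct and follows exactly the approach indicated in the paper: the paper does not give a detailed proof but simply invokes the Robinson--Schensted--Knuth correspondence (citing \cite{Fulton}[I.4]), and your sketch spells out precisely this bijective argument.
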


\subsubsection{Representations of symmetric groups.}
\label{subs: sym}
Let \(d\in \N_{\geq 1}\), and denote by \(\Sigma_{d}\) the symmetric group of the set \(\Set{1, 2, \dotsc, d}\). It is well-known (see e.g. \cite{FultonHarris}[Lectures 1 \& 2]) that there are as many irreducible representations of \(\Sigma_{d}\) as there are conjugacy classes in \(\Sigma_{d}\). Note that the conjugacy classes are in one-to-one correspondance with partitions of \(d\).

Denote by \(\C[\Sigma_{d}]\) the \textsl{regular represention} of \(\Sigma_{d}\), i.e.
\[
\C[\Sigma_{d}]
\bydef
\bigoplus_{\sigma \in \Sigma_{d}}
\C \cdot \sigma.
\]
Note that \(\C[\Sigma_{d}]\) is tautologically a representation of the symmetric group \(\Sigma_{d}\). Note also that any representation \((M, \rho: \Sigma_{d} \to \GL_{\C}(M))\) of the symmetric group \(\Sigma_{d}\) has a natural structure of (left) \(\C[\Sigma_{d}]\)-module defined as follows. For \(m \in M\) and \((\lambda_{\sigma} \in \C)_{\sigma \in \Sigma_{d}}\), set:
\[
(\sum\limits_{\sigma \in \Sigma_{d}} \lambda_{\sigma} \sigma)
\cdot
m
\bydef
\sum\limits_{\sigma \in \Sigma_{d}} \lambda_{\sigma} \rho(\sigma)(m).
\]

Every irreducible representations of \(\Sigma_{d}\) appears in \(\C[\Sigma_{d}]\) (this is actually a general fact for the regular representation of any finite group: see e.g. \cite{FultonHarris}[Lecture 2]). One has furthermore canonical projections onto irreducible factors, which are defined as follows. Let \(\mathbi{\lambda} \vdash d\) be a partition of \(d\), and let \(T\) be a standard Young tableau with shape \(\mathbi{\lambda}\). To the standard Young tableau \(T\) is associated the following two subgroups of \(\Sigma_{d}\):
\begin{itemize}
\item{} the subgroup \(R(T)\) of permutations that preserve the rows of \(T\);
\item{} the subgroup \(C(T)\) of permutations that preserves the columns of \(T\).
\end{itemize}
The so-called \textsl{Young symmetrizer associated to the tableau \(T\)} is the vector in \(\C[\Sigma_{d}]\) defined as follows:
\[
c_{T}
\bydef
\underbrace{\big(\sum\limits_{q \in C(T)} \epsilon(q)q\big)}_{\bydef b_{T}}
\times
\underbrace{\big(\sum\limits_{p \in R(T)} p\big)}_{\bydef a_{T}},
\]
where \(\epsilon(\cdot)\) is the signature.  One has the following theorem (see e.g. \cite{Boerner}[IV. 3]):
\begin{theorem}
\label{thm: struct sym}
The following holds.
\begin{itemize}
\item{}
The multiplication map
\[
p_{T}\colon
\left(
\begin{array}{ccc}
\C[\Sigma_{d}] & \longrightarrow  &  \C[\Sigma_{d}]
 \\
 v & \longmapsto  & v \times c_{T}
 \end{array}
\right)
\]
is an almost-projection\footnote{By definition, this means that \(p_{T} \circ p_{T}= \lambda p_{T}\) for some non-zero scalar \(\lambda\).} onto an irreducible representation of \(\Sigma_{d}\).
\item{}
For two different standard tableaux \(T\) and \(T'\) with same shape \(\mathbi{\lambda} \vdash d\), the irreducible representations \(\Image(p_{T})\) and \(\Image(p_{T'})\) are isomorphic. The abstract irreducible representation is usually denoted \(S^{\mathbi{\lambda}}\).
\item{} 
For two standard tableaux \(T\) and \(T'\) with different shape, the irreducible representations \(\Image(p_{T})\) and \(\Image(p_{T'})\) are non-isomorphic.
\item{}
The representation \(\C[\Sigma_{d}]\) splits as follows into direct sum of irreducible representations:
\[
\C[\Sigma_{d}]
=
\bigoplus_{T} \Image(p_{T}),
\]
where \(T\) runs over standard tableaux with \(d\) boxes.
\end{itemize}
\end{theorem}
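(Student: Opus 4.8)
This statement is classical: it is the content of \cite{Boerner}[IV.3] (see also \cite{FultonHarris}[Lecture~4]), and the plan is to reduce each of the four assertions to the combinatorial properties of the Young symmetrizer $c_{T}=b_{T}\,a_{T}$. The engine is \emph{von Neumann's lemma}: for $g\in\Sigma_{d}$, the element $a_{T}\,g\,b_{T}$ is a signed scalar multiple of $a_{T}b_{T}$ if $g=pq$ for some $p\in R(T)$ and $q\in C(T)$, and is zero otherwise --- the dichotomy being purely combinatorial, since in the remaining case there is a transposition in $R(T)$ whose $g$-conjugate lies in $C(T)$, and that transposition annihilates $a_{T}\,g\,b_{T}$. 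From this one extracts the two facts used below: \textup{(i)} the corner $c_{T}\,\C[\Sigma_{d}]\,c_{T}$ is the line $\C\cdot c_{T}$, so in particular $c_{T}^{2}=\alpha_{T}\,c_{T}$ for some scalar $\alpha_{T}$; and \textup{(ii)} fixing the lexicographic order on partitions of $d$, one has $c_{T}\,\C[\Sigma_{d}]\,c_{T'}=(0)$ whenever $\mathrm{sh}(T)>\mathrm{sh}(T')$.

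\emph{First assertion.} From \textup{(i)} we get $p_{T}\circ p_{T}=\alpha_{T}\,p_{T}$. To see $\alpha_{T}\neq0$, compute the trace of $p_{T}$ --- i.e. of right multiplication by $c_{T}$ --- in the standard basis $\{\sigma\}_{\sigma\in\Sigma_{d}}$ of $\C[\Sigma_{d}]$: it equals $d!$ times the coefficient of the identity in $c_{T}$, which is $1$ (the unique solution of $qp=e$ with $p\in R(T)$, $q\in C(T)$ is $p=q=e$); if $\alpha_{T}$ were $0$ then $p_{T}$ would be nilpotent and its trace would vanish. Hence $p_{T}$ is an almost-projection onto $\Image(p_{T})=\C[\Sigma_{d}]\,c_{T}$ (with $\dim\Image(p_{T})=d!/\alpha_{T}$). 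Irreducibility is then formal: if $W\subseteq\C[\Sigma_{d}]\,c_{T}$ is a nonzero sub-representation, it is a left ideal with $c_{T}W\subseteq\C\,c_{T}$ by \textup{(i)}; if $c_{T}W=\C\,c_{T}$ then $c_{T}\in W$, so $\C[\Sigma_{d}]\,c_{T}\subseteq W$ and $W=\C[\Sigma_{d}]\,c_{T}$, while if $c_{T}W=(0)$ one writes $W=\C[\Sigma_{d}]\,\varepsilon$ for an idempotent $\varepsilon\in W$ (Maschke), and, writing $\varepsilon=y\,c_{T}$, obtains $\varepsilon=\varepsilon^{2}=y\,(c_{T}\,y\,c_{T})\in y\cdot(c_{T}W)=(0)$, a contradiction.

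\emph{Second and third assertions.} If $T$ and $T'$ have the same shape, then $T'=\sigma\cdot T$ for some $\sigma\in\Sigma_{d}$, so $R(T')=\sigma R(T)\sigma^{-1}$, $C(T')=\sigma C(T)\sigma^{-1}$ and $c_{T'}=\sigma\,c_{T}\,\sigma^{-1}$; then right multiplication by $\sigma$ is a $\Sigma_{d}$-equivariant automorphism of $\C[\Sigma_{d}]$ carrying $\C[\Sigma_{d}]\,c_{T}$ onto $\C[\Sigma_{d}]\,c_{T'}$, whence $\Image(p_{T})\simeq\Image(p_{T'})$, and the abstract model $S^{\mathbi{\lambda}}$ is well defined. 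If $T$ and $T'$ have distinct shapes, use the canonical identification $\Hom_{\Sigma_{d}}\big(\C[\Sigma_{d}]\,\varepsilon_{T},\,\C[\Sigma_{d}]\,\varepsilon_{T'}\big)\simeq\varepsilon_{T}\,\C[\Sigma_{d}]\,\varepsilon_{T'}$ (where $\varepsilon_{T}=c_{T}/\alpha_{T}$ and the isomorphism is $\phi\mapsto\phi(\varepsilon_{T})$): by \textup{(ii)}, one of $\varepsilon_{T}\,\C[\Sigma_{d}]\,\varepsilon_{T'}$, $\varepsilon_{T'}\,\C[\Sigma_{d}]\,\varepsilon_{T}$ is $(0)$, so one of the two $\Hom$-spaces vanishes; since an isomorphism $\Image(p_{T})\simeq\Image(p_{T'})$ would make both nonzero, these irreducibles are non-isomorphic. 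In particular the $S^{\mathbi{\lambda}}$, $\mathbi{\lambda}\vdash d$, are pairwise non-isomorphic, and as their number equals that of the conjugacy classes of $\Sigma_{d}$ they exhaust the irreducible representations of $\Sigma_{d}$.

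\emph{Fourth assertion.} By Maschke's theorem, $\C[\Sigma_{d}]=\bigoplus_{\mathbi{\lambda}\vdash d}(S^{\mathbi{\lambda}})^{\oplus m_{\mathbi{\lambda}}}$; since this is the regular representation, $m_{\mathbi{\lambda}}=\dim S^{\mathbi{\lambda}}$, hence $d!=\sum_{\mathbi{\lambda}}(\dim S^{\mathbi{\lambda}})^{2}$. It remains to prove that the sum $\sum_{T}\Image(p_{T})$ over standard tableaux is direct. The plan is to totally order the standard tableaux, refining the lexicographic order on shapes, so that $T<T'$ forces $c_{T}\,c_{T'}=0$: for distinct shapes this is \textup{(ii)}, and for two distinct standard tableaux of the \emph{same} shape it is a further combinatorial lemma on standard tableaux. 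Granting this, from a relation $\sum_{i}v_{i}\,c_{T_{i}}=0$, multiplication on the right by $c_{T_{j}}$ for the largest index $j$ present kills all terms except the $j$-th, leaving $\alpha_{\mathrm{sh}(T_{j})}\,v_{j}\,c_{T_{j}}=0$ with $\alpha_{\mathrm{sh}(T_{j})}\neq0$; descending induction yields directness. Therefore, for each $\mathbi{\lambda}$, the $f_{\mathbi{\lambda}}$ copies $\Image(p_{T})\simeq S^{\mathbi{\lambda}}$ (over standard $T$ of shape $\mathbi{\lambda}$) are independent inside the $S^{\mathbi{\lambda}}$-isotypic component, which has dimension $(\dim S^{\mathbi{\lambda}})^{2}$, so $f_{\mathbi{\lambda}}\leq\dim S^{\mathbi{\lambda}}$. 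Feeding this termwise inequality into the identities $d!=\sum_{\mathbi{\lambda}}f_{\mathbi{\lambda}}^{2}$ (Proposition \ref{prop: RSK}) and $d!=\sum_{\mathbi{\lambda}}(\dim S^{\mathbi{\lambda}})^{2}$ forces $f_{\mathbi{\lambda}}=\dim S^{\mathbi{\lambda}}$ for every $\mathbi{\lambda}$; hence $\dim\bigoplus_{T}\Image(p_{T})=\sum_{\mathbi{\lambda}}f_{\mathbi{\lambda}}\dim S^{\mathbi{\lambda}}=\sum_{\mathbi{\lambda}}f_{\mathbi{\lambda}}^{2}=d!=\dim\C[\Sigma_{d}]$, which gives the stated decomposition. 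The main obstacle is the pair of combinatorial inputs on Young tableaux --- von Neumann's lemma, and its refinement to an order on standard tableaux of a fixed shape with $c_{T}c_{T'}=0$; the rest is formal (Maschke, Schur, and the structure of the regular representation).
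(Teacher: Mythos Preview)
Your proof is correct and follows the classical route from \cite{Boerner}[IV.3], which is precisely the reference the paper defers to: the paper does not give its own proof of this theorem but simply cites it as a standard result. Your sketch --- von Neumann's lemma for the one-dimensional corner, the trace computation for $\alpha_{T}\neq 0$, conjugation for same-shape isomorphism, the vanishing of the mixed corner for distinct shapes, and the ordering argument combined with RSK for the direct-sum decomposition --- is exactly the argument found in the cited source, so there is nothing to compare beyond noting that you have filled in what the paper left as a reference.
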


\subsubsection{Representations of general linear groups.}
\label{subs: linear}
Let \(n \in \N_{\geq 1}\), and consider the general linear group \(\GL_{n}(\C)\), along with the subgroups \(D \subset B \subset \GL_{n}(\C)\) where
\begin{itemize}
\item{} \(D\) is the subgroup of diagonal matrixes;
\item{} \(B\) is the Borel subgroup of upper triangular matrixes.
\end{itemize}
Fix \(E\) a finite-dimensional polynomial\footnote{This means that the map \(\rho: \GL_{n}(\C) \to \GL(E)\) is polynomial, i.e. after choosing a basis for \(E\), the \(\dim(E)^{2}\) coordinates functions of \(\rho\) are polynomial in the \(n^{2}\) variables of \(\GL_{n}(\C)\).} representation of \(\GL_{n}(\C)\). A \textsl{weight vector with weight \(\mathbi{\lambda}=(\lambda_{1}, \dotsc, \lambda_{n}) \in \N^{n}\)} is a vector \(e \in E\) such that, for every \(x_{1}, \dotsc, x_{n} \in \C\), the following equality holds:
\[
\Diag(x_{1}, \dotsc, x_{n}) \cdot e
=
(x_{1}^{\lambda_{1}} \dotsb x_{n}^{\lambda_{n}}) e.
\]
A \textsl{\(\mathbi{\lambda}\)-highest weight vector} is a weight vector of weight \(\mathbi{\lambda}\) which is left invariant, up to homothety, by \(B\). Namely, the following equality holds:
\[
B \cdot e
=
\C^{*} e.
\]

General theory of representations (see e.g \cite{Fulton}[II.8] and \cite{FultonHarris}[Lectures 14 \& 15]) tells that finite-dimensional polynomial representations are uniquely determined by their highest weight vectors. More precisely, one has that 
\begin{itemize}
\item{} every representation splits as a direct sum of irreducible representations;
\item{} a representation is irreducible if and only if it has a unique highest weight vector, up to a multiplicative factor;
\item{} two irreducible representations are isomorphic if and only if their highest weight vector has same weight.
\end{itemize}
Furthermore, in the case of the general linear group, possible highest weight vectors \(\mathbi{\lambda} \in \N^{n}\) are in one-to-one correspondance with partitions with at most \(n\) parts. Said otherwise, to any \(\mathbi{\lambda} \in \N^{n}\) satisfying
\[
\lambda_{1} \geq \lambda_{2} \geq \dotsb \geq \lambda_{n} \geq 0
\]
is associated a unique irreducible representation of \(\GL_{n}(\C)\), and vice-versa. This unique representation is denoted \(S^{\mathbi{\lambda}}\C^{n}\), and called \(\mathbi{\lambda}\)th Schur power of \(\C^{n}\). For later use, let us record the following classic proposition (see e.g. \cite{Fulton}[II.8.3]):
\begin{proposition}
\label{prop: dim Schur}
The dimension of the representation \(S^{\mathbi{\lambda}}\C^{n}\) is equal to \(d_{\mathbi{\lambda}}(n)\).
\end{proposition}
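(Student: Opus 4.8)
The plan is to reduce the computation of $\dim S^{\mathbi{\lambda}}\C^{n}$ to a count of weight spaces, to identify the relevant weight multiplicities with Kostka numbers, and then to invoke the identity \eqref{eq: Kotska}. Since $D\subset\GL_{n}(\C)$ is a torus, the restriction of $S^{\mathbi{\lambda}}\C^{n}$ to $D$ splits as a direct sum of its weight subspaces
\[
S^{\mathbi{\lambda}}\C^{n}
=
\bigoplus_{\abs{\mathbi{a}}=d}
\big(S^{\mathbi{\lambda}}\C^{n}\big)_{\mathbi{a}},
\]
where $\big(S^{\mathbi{\lambda}}\C^{n}\big)_{\mathbi{a}}$ denotes the subspace of vectors $e$ satisfying $\Diag(x_{1}, \dotsc, x_{n})\cdot e = (x_{1}^{a_{1}}\dotsb x_{n}^{a_{n}})e$ for all $x_{i}\in\C$; the sum is finite since $S^{\mathbi{\lambda}}\C^{n}$ is polynomial and homogeneous of degree $d$, so that all weights $\mathbi{a}\in\N^{n}$ occurring satisfy $\abs{\mathbi{a}}=d$. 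Writing $\chi(g)\bydef\Trace\big(\rho(g)\big)$ for the character and evaluating at $g=\Id=\Diag(1, \dotsc, 1)$ gives
\[
\dim S^{\mathbi{\lambda}}\C^{n}
=
\chi(\Id)
=
\sum_{\abs{\mathbi{a}}=d}
\dim\big(S^{\mathbi{\lambda}}\C^{n}\big)_{\mathbi{a}}.
\]

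The heart of the matter is then the equality $\dim\big(S^{\mathbi{\lambda}}\C^{n}\big)_{\mathbi{a}}=K_{\mathbi{\lambda}, \mathbi{a}}$ for every $\mathbi{a}$ with $\abs{\mathbi{a}}=d$. To prove it I would use the explicit model of the Schur power as the image of a Young symmetrizer: fixing a standard tableau $T_{0}$ of shape $\mathbi{\lambda}$, one has $S^{\mathbi{\lambda}}\C^{n}\simeq c_{T_{0}}\cdot(\C^{n})^{\otimes d}$, and to each filling $T$ of the diagram of shape $\mathbi{\lambda}$ by $\Set{1, \dotsc, n}$ one attaches the vector $v_{T}\bydef c_{T_{0}}\cdot(e_{i_{1}}\otimes\dotsb\otimes e_{i_{d}})$, where $i_{1}, \dotsc, i_{d}$ are the entries of $T$ read in a fixed box order. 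Each $v_{T}$ is either zero or a weight vector whose weight is the content of $T$, and two facts then conclude: (i) the straightening law rewrites every $v_{T}$ as a $\Z$-linear combination of the $v_{T'}$ with $T'$ semi-standard of the same content, so the semi-standard $v_{T}$'s span each weight space; (ii) those semi-standard $v_{T}$'s are linearly independent. Counting them yields exactly $K_{\mathbi{\lambda}, \mathbi{a}}$ vectors in weight $\mathbi{a}$. Alternatively, one may quote the identification of $\chi$, restricted to $D$, with the Schur polynomial $s_{\mathbi{\lambda}}(x_{1}, \dotsc, x_{n})$ together with its semi-standard-tableau expansion, in which the coefficient of $x_{1}^{a_{1}}\dotsb x_{n}^{a_{n}}$ is by definition $K_{\mathbi{\lambda}, \mathbi{a}}$.

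Combining the two displayed formulas with \eqref{eq: Kotska} then gives
\[
\dim S^{\mathbi{\lambda}}\C^{n}
=
\sum_{\abs{\mathbi{a}}=d}
K_{\mathbi{\lambda}, \mathbi{a}}
=
d_{\mathbi{\lambda}}(n),
\]
which is the proposition. The one genuinely non-formal point — and hence the main obstacle — is step (ii): the straightening law only produces spanning, and the linear independence of the semi-standard tableau vectors (equivalently, the fact that the weight multiplicities equal, and not merely are bounded above by, the Kostka numbers) must be argued separately, classically via a leading-term analysis with respect to a suitable ordering on the tensor monomials $e_{i_{1}}\otimes\dotsb\otimes e_{i_{d}}$, or by appealing to the linear independence of the Schur polynomials.
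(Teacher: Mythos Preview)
Your proposal is correct and is the standard argument; in fact the paper does not give a proof of this proposition at all---it is recorded as a classical fact with a reference to \cite{Fulton}[II.8.3]. So there is nothing to compare against: you have supplied a proof where the paper simply cites one.

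One small point of consistency with the paper's conventions: the paper treats $(\C^{n})^{\otimes d}$ as a \emph{right} $\C[\Sigma_{d}]$-module (see Section~\ref{subs: linear} and the definition of $E(p_{T})$), so your model should be written as $(\C^{n})^{\otimes d}\cdot c_{T_{0}}$ rather than $c_{T_{0}}\cdot(\C^{n})^{\otimes d}$. This is purely notational and does not affect the argument.
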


To any finite-dimensional polynomial representation \(E\) is associated its \textsl{character}, denoted \(\chi_{E}\), which is by definition the symmetric polynomial in the variables \(\mathbi{x}=(x_{1}, \dotsc, x_{n})\)
\[
\chi_{E}(\mathbi{x})
\bydef
\Trace(\Diag(\mathbi{x})).
\]
In the case where \(E=S^{\mathbi{\lambda}}\C^{n}\), the character \(\chi_{E}\) is called the \textsl{\(\mathbi{\lambda}\)th Schur polynomial} (in the variables \(\mathbi{x}=(x_{1}, \dotsc, x_{n})\)), and is denoted by \(s_{\mathbi{\lambda}}\). General theory on symmetric polynomials (see e.g. \cite{Fulton}[I.2.2]) shows that Schur polynomials form a basis of symmetric polynomials. 
Accordingly, one deduces that a representation \(E\) of the general linear group is uniquely determined by its character \(\chi_{E}\).

There is a natural interaction between representations of symmetric groups and general linear groups given as follows. Let \(E=(\C^{n})^{\otimes d}\). On the one hand, it is a natural representation of the general linear group \(\GL_{n}(\C)\). On the other hand, it has a natural structure of \textsl{right} \(\C[\Sigma_{d}]\)-module given by permutations of factors of tensor products. Namely, for \(\sigma \in \Sigma_{d}\) and \(v_{1}, \dots, v_{d} \in \C^{n}\), one sets:
\[
(v_{1} \otimes \dotsb \otimes v_{d})\cdot \sigma
\bydef
v_{\sigma(1)}\otimes \dotsb \otimes v_{\sigma(d)}.
\]
The simple but important observation is that these actions are \textsl{compatible}, in the sense that they commute with each other:
\[
A \cdot ((v_{1} \otimes \dotsb \otimes v_{d}) \cdot \sigma)
=
(A \cdot (v_{1} \otimes \dotsb \otimes v_{d})) \cdot \sigma.
\]
Here, \(A\) is a matrix in \(\GL_{n}(\C)\), \(\sigma\) is an element in \(\Sigma_{d}\), and \(v_{1}, \dotsc, v_{d}\) are vectors in \(\C^{n}\).

For any representation \(M\) of \(\Sigma_{d}\), seen as a left \(\C[\Sigma_{d}]\)-module, denote by
\[
E(M)
\bydef
E
\otimes_{\C[\Sigma_{d}]}
M
\]
the tensor product over \(\C[\Sigma_{d}]\) between \(E\) and \(M\).
In particular, one has tautologically the equality
\[
E(\C[\Sigma_{d}])
=
E.
\]
Following what was seen in the previous Section \ref{subs: sym}, one is naturally lead to consider, for \(T\) a standard Young tableau with \(d\) boxes, the linear map
\[
E(p_{T})\colon
\left(
\begin{array}{ccc}
E & \longrightarrow  &  E
 \\
 v & \longmapsto  & v \otimes_{\C[\Sigma_{d}]} c_{T}
 \end{array}
\right).
\]
Using in particular Theorem \ref{thm: struct sym}, one can show the following (see e.g. \cite{Boerner}[V.4] and \cite{Fulton}[II.8]):
\begin{theorem}
\label{thm: struct gen}
The following holds.
\begin{itemize}
\item{}
The linear map
\[
E(p_{T})\colon
\left(
\begin{array}{ccc}
E & \longrightarrow  &  E
 \\
 v & \longmapsto  & v \otimes_{\C[\Sigma_{d}]} c_{T}
 \end{array}
\right)
\]
is an almost-projection onto an irreducible representation of \(\GL_{n}(\C)\).
\item{}
For two different standard tableaux \(T\) and \(T'\) with same shape \(\mathbi{\lambda}\vdash d\), the irreducible representations \(\Image(E(p_{T}))\) and \(\Image(E(p_{T'}))\) are isomorphic to the \(\mathbi{\lambda}\)th Schur power \(S^{\mathbi{\lambda}}\C^{n} \simeq E(S^{\mathbi{\lambda}})\).
\item{} 
For two standard tableaux \(T\) and \(T'\) with different shape, the irreducible representations \(\Image(E(p_{T}))\) and \(\Image(E(p_{T'}))\) are non-isomorphic.
\item{}
The representation \(E=(\C^{n})^{\otimes d}\) splits as follows into direct sum of irreducible representations:
\[
E
=
\bigoplus_{T} \Image(E(p_{T})),
\]
where \(T\) runs over standard tableaux with \(d\) boxes.
\end{itemize}

\end{theorem}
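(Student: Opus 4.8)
The plan is to transport the structure theorem for $\Sigma_{d}$ (Theorem \ref{thm: struct sym}) to $\GL_{n}(\C)$ by applying the additive functor $E\otimes_{\C[\Sigma_{d}]}(-)$, using that the left $\GL_{n}(\C)$-action and the right $\C[\Sigma_{d}]$-action on $E=(\C^{n})^{\otimes d}$ commute, and then to pin down the isomorphism class of each summand by a dimension count powered by Proposition \ref{prop: RSK} and Proposition \ref{prop: dim Schur}.

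The formal part is as follows. Under the canonical identification $E\otimes_{\C[\Sigma_{d}]}\C[\Sigma_{d}]=E$, the map $E(p_{T})$ is simply right multiplication by $c_{T}$ on $E$; since a matrix $A\in\GL_{n}(\C)$ acts on the left on tensor factors while $\Sigma_{d}$ permutes them, the two actions commute, so $\Image(E(p_{T}))=E\cdot c_{T}$ is a $\GL_{n}(\C)$-subrepresentation of $E$, and $c_{T}\times c_{T}=\lambda\,c_{T}$ (for a nonzero scalar $\lambda$) forces $E(p_{T})\circ E(p_{T})=\lambda\,E(p_{T})$, which is the almost-projection property (irreducibility of the image is deferred). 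Next, the decomposition $\C[\Sigma_{d}]=\bigoplus_{T}\Image(p_{T})$ of Theorem \ref{thm: struct sym} is an internal direct sum of \emph{left} $\C[\Sigma_{d}]$-modules, each $\Image(p_{T})=\C[\Sigma_{d}]\,c_{T}$ being a left ideal; applying the additive functor $E\otimes_{\C[\Sigma_{d}]}(-)$ gives $E=\bigoplus_{T}\bigl(E\otimes_{\C[\Sigma_{d}]}\Image(p_{T})\bigr)$, and the natural map identifies $E\otimes_{\C[\Sigma_{d}]}\Image(p_{T})$ with $\Image(E(p_{T}))\subseteq E$; this is the fourth bullet. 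Moreover, if $T$ and $T'$ have the same shape $\mathbi{\lambda}$, then Theorem \ref{thm: struct sym} provides an isomorphism $\Image(p_{T})\simeq\Image(p_{T'})$ of left $\C[\Sigma_{d}]$-modules, whence, by functoriality, $\Image(E(p_{T}))\simeq\Image(E(p_{T'}))$ as $\GL_{n}(\C)$-representations; denote this common isomorphism class by $E(S^{\mathbi{\lambda}})$.

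What remains, and is the only non-formal step, is to identify $E(S^{\mathbi{\lambda}})$ with $S^{\mathbi{\lambda}}\C^{n}$ and to establish irreducibility. I would first exhibit, by the classical construction (see e.g. \cite{Fulton}[II.8]), a highest weight vector of weight $\mathbi{\lambda}$ inside $\Image(E(p_{T}))=E\cdot c_{T}$: fixing the standard basis $(e_{1},\dots,e_{n})$ of $\C^{n}$, start from the decomposable tensor whose $k$-th factor is $e_{i}$, where $i$ is the row of the box of $T$ labelled $k$ (so that $e_{i}$ occurs $\lambda_{i}$ times), then apply $b_{T}$ to antisymmetrize along columns and $a_{T}$ to symmetrize along rows; the outcome is a $D$-eigenvector of weight $\mathbi{\lambda}$, invariant up to scalar under $B$, and it is nonzero precisely when $n$ is at least the number of parts of $\mathbi{\lambda}$. (If $\mathbi{\lambda}$ has more than $n$ parts, then $E(p_{T})=0$, and the corresponding terms disappear from every statement, $S^{\mathbi{\lambda}}\C^{n}$ and $d_{\mathbi{\lambda}}(n)$ vanishing as well.) Consequently $\Image(E(p_{T}))$ contains a copy of the irreducible $S^{\mathbi{\lambda}}\C^{n}$, so $\dim E(S^{\mathbi{\lambda}})\geq\dim S^{\mathbi{\lambda}}\C^{n}=d_{\mathbi{\lambda}}(n)$ by Proposition \ref{prop: dim Schur}. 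On the other hand, there are exactly $f_{\mathbi{\lambda}}$ standard tableaux of shape $\mathbi{\lambda}$, so the decomposition above yields $n^{d}=\dim E=\sum_{\mathbi{\lambda}\vdash d}f_{\mathbi{\lambda}}\dim E(S^{\mathbi{\lambda}})$, whereas Proposition \ref{prop: RSK} gives $n^{d}=\sum_{\mathbi{\lambda}\vdash d}f_{\mathbi{\lambda}}\,d_{\mathbi{\lambda}}(n)$. Since each $f_{\mathbi{\lambda}}$ is positive and the inequality is termwise, comparing the two sums forces $\dim E(S^{\mathbi{\lambda}})=d_{\mathbi{\lambda}}(n)$ for every $\mathbi{\lambda}\vdash d$; hence the inclusion $S^{\mathbi{\lambda}}\C^{n}\subseteq\Image(E(p_{T}))$ is an equality, $\Image(E(p_{T}))$ is irreducible and isomorphic to $S^{\mathbi{\lambda}}\C^{n}$, which gives the first and second bullets. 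The third bullet then follows from the general theory recalled in Section \ref{subs: linear}: for $\mathbi{\lambda}\neq\mathbi{\mu}$, the representations $S^{\mathbi{\lambda}}\C^{n}$ and $S^{\mathbi{\mu}}\C^{n}$ are non-isomorphic because their highest weight vectors carry different weights.

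The main obstacle is precisely this non-formal input: to cash in the counting identity of Proposition \ref{prop: RSK} one needs to know in advance that $\Image(E(p_{T}))$ really meets the irreducible $S^{\mathbi{\lambda}}\C^{n}$ (equivalently, that the Schur power is nonzero of the expected highest weight), and this rests on the explicit combinatorial computation with the antisymmetrized tensor, not on pure category theory. Two minor points deserve care: the decomposition of $\C[\Sigma_{d}]$ must be used as a genuine internal direct sum of left modules, so that no right-exactness subtlety arises when applying the tensor functor; and the isomorphisms between the $\Image(p_{T})$ of equal shape supplied by Theorem \ref{thm: struct sym} must be taken to be left $\C[\Sigma_{d}]$-linear for the functoriality argument to go through.
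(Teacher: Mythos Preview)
The paper does not give its own proof of this theorem: it merely states ``Using in particular Theorem \ref{thm: struct sym}, one can show the following (see e.g.\ \cite{Boerner}[V.4] and \cite{Fulton}[II.8])'' and moves on. Your proposal is a correct and well-organized reconstruction of the standard argument found in those references, in particular the one in Fulton: transport the $\Sigma_{d}$-decomposition through the functor $E\otimes_{\C[\Sigma_{d}]}(-)$, exhibit the classical highest weight vector of weight $\mathbi{\lambda}$ inside $E\cdot c_{T}$, and close with the dimension count $n^{d}=\sum_{\mathbi{\lambda}}f_{\mathbi{\lambda}}\dim E(S^{\mathbi{\lambda}})\geq\sum_{\mathbi{\lambda}}f_{\mathbi{\lambda}}\,d_{\mathbi{\lambda}}(n)=n^{d}$ via Propositions \ref{prop: RSK} and \ref{prop: dim Schur}. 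Your caveats at the end (the internal direct sum must be of \emph{left} $\C[\Sigma_{d}]$-modules, and the shape-preserving isomorphisms must be left-linear) are exactly the right ones; one very minor point is that with the convention $c_{T}=b_{T}\times a_{T}$ and the right action, acting by $c_{T}$ first applies $b_{T}$ and then $a_{T}$, so your description of the order is consistent, but you should be careful when checking $B$-invariance that the \emph{final} operation is the row symmetrization.
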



\subsection{Zero-dimensional systems of partial differential equations with constant coefficients.}
\label{subs: PDE}
Fix \(n \in \N_{\geq 1}\) a natural number, and denote by 
\(
\O(\C^{n})
\)
the set of entire functions on \(\C^{n}\) (i.e. the set of holomorphic maps in \(n\) variables defined everywhere). Consider 
\[
A \bydef \C[\partial_{1}, \dotsc, \partial_{n}]
\]
the polynomial ring in the formal variables \(\partial_{1}, \dotsc, \partial_{n}\). The set of holomorphic maps \(\O(\C^{n})\) has a natural structure of \(A\)-module defined as follows on monomials
\[
(\partial_{1}^{\alpha_{1}} \dotsb \partial_{n}^{\alpha_{n}}) \cdot P
\bydef
\frac{\partial^{\alpha_{1}+\dotsb+\alpha_{n}}P}{(\partial z_{1})^{\alpha_{1}} \dotsb (\partial z_{n})^{\alpha_{n}}},
\]
and extended by linearity.
To any ideal \(I \subset A\) is associated a system \(\mathcal{S}(I)\) of partial differential equations (PDE's):
\[
\mathcal{S}(I)
\bydef
\Set{P \in \O(\C^{n}) \ | \ E \cdot P \equiv 0 \ \forall E \in I}.
\]
Note that since \(A\) is Noetherian (by Hilbert's basis theorem), \(\mathcal{S}(I)\) is defined by a finite number of partial differential equations with constant coefficients.

Suppose that the ideal \(I\) is zero-dimensional, i.e. the affine variety \(\Spec(\frac{A}{I})\) is finite.
In this case, one can relate the dimension (as a \(\C\)-vector space) of the space of solutions \(\mathcal{S}(I)\) to the algebraic structure of the quotient space \(\frac{A}{I}\).
Indeed, one of the main result that initiated the theory of systems of PDE's with constant coefficients is the following theorem, proved in \cite{Palamodov}[VII] and \cite{Ehrenpreis}:
\begin{theorem}[]
The \(A\)-module \(\O_{\C^{n}}\) is injective, i.e. the functor
\[
\Hom_{A}(\cdot, \O_{\C^{n}})
\]
is exact.
\end{theorem}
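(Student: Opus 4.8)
The plan is to reduce the injectivity of \(\O_{\C^{n}}\) over \(A\) to the solvability of compatible constant-coefficient systems of PDE's, and then to establish that solvability by Fourier--Laplace methods. Since \(A\) is Noetherian, Baer's criterion says that \(\O_{\C^{n}}\) is injective if and only if \(\mathrm{Ext}^{1}_{A}(A/I,\O_{\C^{n}})=0\) for every ideal \(I\subset A\), equivalently \(\mathrm{Ext}^{1}_{A}(M,\O_{\C^{n}})=0\) for every finitely generated \(A\)-module \(M\). Picking a piece \(A^{r}\xrightarrow{R}A^{q}\xrightarrow{Q}A^{p}\) of a free resolution of \(M\) (so that \(\Ker Q=\Image R\)) and applying \(\Hom_{A}(-,\O_{\C^{n}})\), this vanishing becomes exactness at the middle term of \(\O_{\C^{n}}^{p}\xrightarrow{\transp{Q}}\O_{\C^{n}}^{q}\xrightarrow{\transp{R}}\O_{\C^{n}}^{r}\), where \(\transp{Q},\transp{R}\) are the systems of differential operators obtained by transposing the matrices \(Q,R\). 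In other words, one must show: every \(f\in\O_{\C^{n}}^{q}\) satisfying the compatibility conditions \(\transp{R}f=0\) is of the form \(f=\transp{Q}g\) for some \(g\in\O_{\C^{n}}^{p}\).

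To prove this I would pass to the Fourier--Borel transform: the strong dual of the Fréchet--Schwartz space \(\O_{\C^{n}}\) is the space \(\mathrm{Exp}(\C^{n})\) of entire functions of exponential type (Paley--Wiener--Ehrenpreis), and under this transform the operator \(\partial_{j}\) acting on \(\O_{\C^{n}}\) becomes multiplication by the coordinate \(\zeta_{j}\) on \(\mathrm{Exp}(\C^{n})\). Dualizing the complex above, and using the closed-range/open-mapping theorem for reflexive Fréchet and (DFS) spaces — applicable once one checks the maps involved are homomorphisms with closed range — exactness of \(\O_{\C^{n}}^{p}\xrightarrow{\transp{Q}}\O_{\C^{n}}^{q}\xrightarrow{\transp{R}}\O_{\C^{n}}^{r}\) becomes equivalent to exactness of \(\mathrm{Exp}(\C^{n})^{r}\xrightarrow{R}\mathrm{Exp}(\C^{n})^{q}\xrightarrow{Q}\mathrm{Exp}(\C^{n})^{p}\), i.e.\ to the flatness of \(\mathrm{Exp}(\C^{n})\) as an \(A\)-module: tensoring the exact complex \(A^{r}\to A^{q}\to A^{p}\) over \(A\) with \(\mathrm{Exp}(\C^{n})\) must preserve exactness. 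Concretely, this flatness amounts to a \emph{quantitative division problem}: every polynomial syzygy \(\sum_{i}P_{i}(\zeta)f_{i}(\zeta)\equiv 0\) with the \(f_{i}\) of exponential type can be written as an \(A\)-combination, with coefficients again of exponential type, of the generators of the syzygy module of \((P_{i})\).

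This division problem is where the real work lies, and it is solved via the structure theory of Ehrenpreis and Palamodov. (When \(n=1\), \(A\) is a PID and the whole claim reduces to divisibility of \(\O_{\C}\), i.e.\ solvability of an arbitrary nonzero constant-coefficient ODE, which is elementary; the difficulty is genuinely a higher-dimensional phenomenon.) One performs a primary decomposition of the characteristic variety \(V=\Spec(A/I)\) and attaches to each component finitely many \emph{Noetherian differential operators}, so that the holomorphic solutions of the homogeneous system are exactly the superpositions of exponential--polynomial solutions obtained by applying these operators to \(x\mapsto e^{\langle\zeta,x\rangle}\) and integrating over \(V\); the existence of such integral representations with controlled growth, combined with Łojasiewicz-type lower bounds for \(|P(\zeta)|\) near \(V\) (the ``slowly decreasing'' estimates), produces the required bounded division. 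Alternatively one solves the division directly via Hörmander's \(L^{2}\) \(\bar\partial\)-method with plurisubharmonic weights modelled on \(\log|P|\). The main obstacle is precisely this quantitative division step --- equivalently, the flatness of \(\mathrm{Exp}(\C^{n})\) over \(A\): the homological reductions above are formal, but controlling the growth of solutions requires the full strength of the Ehrenpreis--Palamodov fundamental principle, which is why for the purposes of this paper we simply invoke \cite{Palamodov}[VII] and \cite{Ehrenpreis}.
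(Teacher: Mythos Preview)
The paper does not prove this theorem; it is stated as a classical external result of Palamodov and Ehrenpreis, with references, and is used purely as a black box en route to Oberst's Theorem~\ref{thm: PDE}. There is therefore no proof in the paper to compare against.

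Your outline is a correct sketch of the standard route: the homological reduction via Baer's criterion and free resolutions is clean, the Fourier--Borel duality identifying the strong dual of \(\O_{\C^{n}}\) with \(\mathrm{Exp}(\C^{n})\) is the right framework, and you correctly isolate the analytic core as a quantitative division problem (equivalently, flatness of \(\mathrm{Exp}(\C^{n})\) over \(A\)). You are also honest that this last step is exactly the content of the Ehrenpreis--Palamodov fundamental principle, so that in the end you invoke the very references the paper cites. One caveat worth flagging: the functional-analytic dualization step --- transferring exactness between the \(\O_{\C^{n}}\)-complex and the \(\mathrm{Exp}(\C^{n})\)-complex via the closed-range and open-mapping theorems --- requires knowing in advance that the relevant differential operators have closed range, and establishing this is itself part of the Ehrenpreis--Palamodov analysis rather than a formality. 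So the layering of your argument is slightly less clean than the presentation suggests, but the overall roadmap is sound and matches the classical literature.
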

Further properties of the \(A\)-module \(\O_{\C^{n}}\) were obtained by Oberst in \cite{Oberst90}, where he proved that \(\O_{\C^{n}}\) is a \textsl{large injective cogenerator}: see \textsl{loc.cit} for definitions and details. As a by-product, this allowed Oberst to prove the following result in \cite{Oberst96}, crucial to us:
\begin{theorem}
\label{thm: PDE}
Let \(I \subset A\) be zero-dimensional ideal. Then one has the following equality:
\[
\dim_{\C}\mathcal{S}(I)
=
\length(\frac{A}{I}).
\]
\end{theorem}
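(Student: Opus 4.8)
The plan is to reinterpret the solution space as a $\Hom$-module and exploit the injectivity of $\O_{\C^n}$ over $A$. First observe that, directly from the definition of the $A$-module structure on $\O_{\C^n}$ and of $\mathcal{S}(I)$, there is a canonical identification $\mathcal{S}(I) = \Hom_A(A/I, \O_{\C^n})$: an $A$-linear map $A/I \to \O_{\C^n}$ is determined by the image $P$ of $1$, and $A$-linearity of that map is exactly the condition $E\cdot P \equiv 0$ for all $E \in I$. Moreover, since $I$ is zero-dimensional, the ring $A/I$ is Artinian, hence of finite length; write $\ell \bydef \length(A/I) < \infty$. The strategy is then a dévissage: because $\O_{\C^n}$ is an injective $A$-module (Ehrenpreis--Palamodov), the contravariant functor $\Hom_A(-,\O_{\C^n})$ is exact, so $M \mapsto \dim_{\C}\Hom_A(M,\O_{\C^n})$ is additive on short exact sequences of finite-length $A$-modules, and it suffices to compute it on the simple composition factors of $A/I$.

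Concretely, fix a composition series $0 = M_0 \subset M_1 \subset \dotsb \subset M_\ell = A/I$ with each $M_i/M_{i-1}$ a simple $A$-module. Applying $\Hom_A(-,\O_{\C^n})$ to $0 \to M_{i-1} \to M_i \to M_i/M_{i-1} \to 0$ and using exactness gives the short exact sequences $0 \to \Hom_A(M_i/M_{i-1},\O_{\C^n}) \to \Hom_A(M_i,\O_{\C^n}) \to \Hom_A(M_{i-1},\O_{\C^n}) \to 0$, hence $\dim_{\C}\Hom_A(M_i,\O_{\C^n}) = \dim_{\C}\Hom_A(M_{i-1},\O_{\C^n}) + \dim_{\C}\Hom_A(M_i/M_{i-1},\O_{\C^n})$. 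Summing over $i$ yields $\dim_{\C}\mathcal{S}(I) = \dim_{\C}\Hom_A(A/I,\O_{\C^n}) = \sum_{i=1}^{\ell}\dim_{\C}\Hom_A(M_i/M_{i-1},\O_{\C^n})$.

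It then remains to check that each simple factor contributes exactly $1$. A simple $A$-module is of the form $A/\mathfrak{m}$ for a maximal ideal $\mathfrak{m} \subset A$, and by Hilbert's Nullstellensatz (as $\C$ is algebraically closed) one has $A/\mathfrak{m} \simeq \C$ and $\mathfrak{m} = (\partial_1 - a_1, \dotsc, \partial_n - a_n)$ for a unique $\mathbi{a}=(a_1,\dotsc,a_n)\in\C^n$. Thus $\Hom_A(A/\mathfrak{m},\O_{\C^n})$ is the space of entire functions $P$ on $\C^n$ with $\partial P/\partial z_i = a_i P$ for all $i$. Since $e^{\langle \mathbi{a}, z\rangle}$ is a nowhere-vanishing entire solution, any entire solution $P$ factors as $P = Q\, e^{\langle \mathbi{a}, z\rangle}$ with $Q$ entire, and the equations force $\partial Q/\partial z_i \equiv 0$ for all $i$, i.e. $Q$ constant; hence this space is the line $\C\cdot e^{\langle \mathbi{a}, z\rangle}$, of dimension $1$. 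Plugging this into the previous display gives $\dim_{\C}\mathcal{S}(I) = \ell = \length(A/I)$.

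The one genuinely deep ingredient is the injectivity of $\O_{\C^n}$ as an $A$-module, i.e. the Ehrenpreis--Palamodov theorem recalled above, which I take as given; everything else is formal. The points that still deserve care are the finiteness of $\length(A/I)$ (from zero-dimensionality of $I$), the Nullstellensatz identification of the simple $A$-modules with the one-dimensional modules attached to points of $\C^n$, and the elementary verification that, up to a scalar, the exponential $e^{\langle \mathbi{a}, z\rangle}$ is the unique entire solution of the base-case system. One could alternatively organize the additivity argument around the large-injective-cogenerator property of $\O_{\C^n}$ established by Oberst, but it is not strictly needed here, since the explicit base-case computation already shows each simple factor contributes a nonzero (indeed one-dimensional) amount.
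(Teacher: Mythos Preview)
Your proof is correct. Note, however, that the paper does not actually give a proof of this theorem: it is quoted as a result of Oberst \cite{Oberst96}, obtained as a by-product of the large-injective-cogenerator property of $\O_{\C^n}$, with the Ehrenpreis--Palamodov injectivity theorem stated just before as background. So there is no ``paper's own proof'' to compare with in detail.

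That said, your argument is a genuinely self-contained route that avoids the full strength of Oberst's framework. You use only the injectivity of $\O_{\C^n}$ (Ehrenpreis--Palamodov) together with a standard d\'evissage along a composition series of the Artinian ring $A/I$, reducing to the elementary computation that $\Hom_A(A/\mathfrak{m},\O_{\C^n})$ is the line spanned by $e^{\langle \mathbi{a},z\rangle}$. This is more direct than invoking the large-injective-cogenerator machinery, and it makes transparent exactly which analytic input is needed (exactness of $\Hom_A(-,\O_{\C^n})$) and which is pure commutative algebra (finite length, Nullstellensatz). The trade-off is that Oberst's approach yields a broader duality theory applicable well beyond the zero-dimensional case, whereas your argument is tailored to this specific statement; for the purposes of this paper, your version is entirely sufficient.
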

We refer for instance to \cite{Eisenbud}[2.4] for the notion of length of an Artinian module.

\section{Structure of differentially homogeneous polynomials.}
\label{sect: Schmidt-Kolchin}

There is a natural left action (by change of variables) of the general linear group \(\GL_{N+1}(\C)\) on the vector space \(V\) of differential polynomials in \((N+1)\) variables defined as follows. Consider \(A \in \GL_{N+1}(\C)\), \(P \in V\), and set:
\[
 A \cdot P
\bydef
P\big((AX)^{(0)}, (AX)^{(1)}, \dotsc).
\]
This action respects the natural filtration and grading on \(V\) defined in the Introduction. Furthermore, a straightforward but crucial observation is that the set \(V^{\Diff}\) of differentially homogeneous polynomials forms a sub-representation of \(V\). As a matter fact, these two facts are so important that we record them in a lemma:
\begin{lemma}
\label{lemma: elementary}
The following two facts hold:
\begin{itemize}
\item{} The natural action of the general linear group on \(V\) preserves the filtration and the grading;
\item{} The vector space \(V^{\Diff}\) forms a sub-representation of \(V\).
\end{itemize}
\end{lemma}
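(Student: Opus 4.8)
The plan is to verify both bullet points by direct computation, the only real content being a compatibility identity for the operation $(AX)^{(k)}$ that lets the $\GL_{N+1}(\C)$-action commute with the $\C[T]$-action used to define differential homogeneity. First I would check that the prescription $A \cdot P \bydef P\big((AX)^{(0)}, (AX)^{(1)}, \dotsc\big)$ really is a left action: since $A$ is a constant matrix, $(AX)^{(k)} = A X^{(k)}$ (the Leibniz-type formula $(QX)^{(k)} = \sum_i \binom{k}{i} Q^{(k-i)} X^{(i)}$ degenerates because all derivatives of a constant vanish), so $B \cdot (A \cdot P)$ is obtained by substituting $X^{(k)} \mapsto B(AX^{(k)}) = (BA) X^{(k)}$, which is exactly $(BA) \cdot P$; and $\Id$ acts trivially. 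This also immediately shows the action is by linear substitutions in each block of variables $X^{(k)}$ separately, hence it preserves the maximal order of derivation — i.e. each $V^{(k)}$ is stable — and it preserves total degree in the $X$-variables, so each graded piece $V_d$ and each $V^{(k)}_d$ is stable. That disposes of the first bullet.

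For the second bullet I would show that $V^{\Diff}$ is stable under the action. Fix $P \in V^{\Diff}_d$ and $A \in \GL_{N+1}(\C)$; I must show $Q \cdot (A \cdot P) = Q^d (A \cdot P)$ for every $Q \in \C[T]$. The key computational point is that the two operations "multiply by $Q$" and "apply $A$" commute, because for a constant matrix $A$ one has
\[
(Q \cdot (AX))^{(k)} = \sum_{i=0}^k \binom{k}{i} Q^{(k-i)} (AX)^{(i)} = \sum_{i=0}^k \binom{k}{i} Q^{(k-i)} A X^{(i)} = A \Big( \sum_{i=0}^k \binom{k}{i} Q^{(k-i)} X^{(i)} \Big) = A\,(QX)^{(k)},
\]
using linearity of $A$ and $(AX)^{(i)} = A X^{(i)}$. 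Consequently, forming $Q \cdot (A \cdot P)$ means substituting into $P$ the vectors $(Q \cdot (AX))^{(k)} = A (QX)^{(k)}$, i.e. first replace $X^{(k)}$ by $(QX)^{(k)}$ and then apply $A$; in symbols $Q \cdot (A \cdot P) = A \cdot (Q \cdot P)$. Since $P$ is differentially homogeneous of degree $d$, $Q \cdot P = Q^d P$, and $A$ acts $\C[T]$-linearly on the coefficients (it only touches the $X$-variables), so $A \cdot (Q^d P) = Q^d (A \cdot P)$. Chaining these equalities gives $Q \cdot (A \cdot P) = Q^d (A \cdot P)$, so $A \cdot P \in V^{\Diff}_d$, as required.

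There is no serious obstacle here: the whole statement is "elementary" precisely because $A$ is a constant matrix, which kills all the cross-terms in the Leibniz expansion and makes the $\GL_{N+1}(\C)$-action and the $\C[T]$-action act on disjoint sets of variables. The only point requiring a line of care is the commutation identity $(Q\cdot(AX))^{(k)} = A\,(QX)^{(k)}$ displayed above; once that is in hand, both bullets follow formally. I would also remark that the same computation shows $V^{\Diff}$ is graded and filtered compatibly with the group action, so that each $(V^{\Diff}_d)^{(k)}$ is a $\GL_{N+1}(\C)$-subrepresentation of $V^{(k)}_d$ — the statement actually invoked in the subsequent sections.
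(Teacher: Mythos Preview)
Your proof is correct and is precisely the straightforward verification the paper has in mind; the paper itself does not write out a proof, merely recording the lemma as a ``straightforward but crucial observation.'' The only substantive step is the commutation identity \((Q\cdot(AX))^{(k)} = A\,(QX)^{(k)}\), which you handle correctly by noting that \(A\) is constant so \((AX)^{(i)} = A X^{(i)}\).
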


The goal of this Section \ref{sect: Schmidt-Kolchin} is to study the structure of differentially homogeneous polynomials, and eventually prove the Schmidt--Kolchin conjecture.

\subsection{Families of differentially homogeneous polynomials.}
\label{subs: family}
\subsubsection{Construction of differentially homogeneous polynomials}
\label{subs: construction}
Fix \(d\geq 1\) a natural number. For any \(d\)-uple \(\mathbi{n}=(n_{1}, \dotsc, n_{d}) \in \Set{0, \dotsc, N}^{d}\) and any family of polynomials in one variable \(\mathbi{R}\bydef (R_{1}, \dotsc, R_{d}) \in \C[t]^{\oplus d}\), one can define a differentially homogeneous polynomial (denoted \(\Wronsk(R_{1}X_{n_{1}}, \dotsc, R_{d}X_{n_{d}})\)) of degree \(d\) as follows. Consider the following element in \(\mathcal{M}_{1,d}(V[t])\)
\[
L_{\mathbi{n}}(\mathbi{R})\bydef(R_{1}(t)X_{n_{1}}, \dotsc, R_{d}(t)X_{n_{d}}).
\]
For a natural number \(r\geq 0\), denote by \(L_{\mathbi{n}}(\mathbi{R})^{(r)}\) the line obtained by replacing, for \(1 \leq k \leq d\), the \(k\)th entry of \(L_{\mathbi{n}}(\mathbi{R})\) (i.e.  \(R_{k}(t)X_{n_{k}}\)) by
\[
\sum\limits_{j=0}^{r} \binom{r}{j} R_{k}(t)^{(r-j)} X_{n_{k}}^{(j)}.
\]
Form then the \(d\times d\) square matrix
\[
W_{\mathbi{n}}(\mathbi{R})
\bydef
\begin{pmatrix}
L_{\mathbi{n}}(\mathbi{R})
\\
L_{\mathbi{n}}^{(1)}(\mathbi{R})
\\
\cdot
\\
\cdot 
\\
L_{\mathbi{n}}^{(d-1)}(\mathbi{R})
\end{pmatrix},
\]
and consider its determinant. One has the following proposition:
\begin{proposition}
\label{prop: def Wronsk}
The determinant \(\det(W_{\mathbi{n}}(\mathbi{R}))_{\vert t=0}\) is a differentially homogeneous polynomial of degree \(d\), denoted by  \(\Wronsk(R_{1}X_{n_{1}}, \dotsc, R_{d}X_{n_{d}})\).
\end{proposition}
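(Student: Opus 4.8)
The plan is to verify the two assertions separately: first that $\det(W_{\mathbi{n}}(\mathbi{R}))_{\vert t=0}$ is homogeneous of degree $d$ in the classical sense, and then — the substantive point — that it is differentially homogeneous of degree $d$, i.e. that $Q\cdot\det(W_{\mathbi{n}}(\mathbi{R}))_{\vert t=0} = Q^{d}\,\det(W_{\mathbi{n}}(\mathbi{R}))_{\vert t=0}$ for every $Q\in\C[T]$. Classical homogeneity is immediate: each entry of $W_{\mathbi{n}}(\mathbi{R})$ is linear in the formal variables $X^{(j)}$ (the polynomials $R_{k}(t)$ and their derivatives are scalars as far as the $X$-grading is concerned), so the $d\times d$ determinant is homogeneous of degree $d$, and specializing $t=0$ preserves this.

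For differential homogeneity, the key observation is that the construction is essentially a Wronskian, and the operation $Q\cdot(-)$ amounts to the substitution $X^{(i)}\mapsto (QX)^{(i)}$ for all $i$. First I would record how $Q\cdot(-)$ acts on the $k$-th entry of $L_{\mathbi{n}}(\mathbi{R})^{(r)}$: applying the Leibniz-rule definition of $(QX)^{(i)}$ and reorganizing the double sum, one finds that the $k$-th entry of $Q\cdot L_{\mathbi{n}}(\mathbi{R})^{(r)}$ equals $\sum_{j=0}^{r}\binom{r}{j}(Q(t)R_{k}(t))^{(r-j)}X_{n_{k}}^{(j)}$ — that is, $Q\cdot(-)$ simply replaces $R_{k}$ by the product $QR_{k}$ in every row simultaneously. (Here one uses that the $T$-variable and the $t$-variable can be identified in the relevant substitution, which is the content of the formal Leibniz conventions set up in the Introduction.) Consequently
\[
Q\cdot W_{\mathbi{n}}(\mathbi{R}) = W_{\mathbi{n}}(Q R_{1},\dotsc,Q R_{d}),
\]
where $QR_{k}$ denotes the polynomial $t\mapsto Q(t)R_{k}(t)$.

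Now I would exploit the multilinearity of the Wronskian-type determinant with respect to the scalar factor $Q(t)$. Writing out $W_{\mathbi{n}}(QR_{1},\dotsc,QR_{d})$ and expanding each entry $\big(Q(t)R_{k}(t)\big)^{(r-j)}$ by the Leibniz rule, one can factor $Q(t)$ out of the determinant: a standard Wronskian identity (prove it by row operations, subtracting suitable $Q^{(m)}/Q$-multiples of lower-order rows, or equivalently by recognizing $W_{\mathbi{n}}(QR_{1},\dots,QR_{d})$ as $Q(t)^{d}$ times $W_{\mathbi{n}}(R_{1},\dots,R_{d})$ after those operations) gives
\[
\det\big(W_{\mathbi{n}}(QR_{1},\dotsc,QR_{d})\big) = Q(t)^{d}\,\det\big(W_{\mathbi{n}}(R_{1},\dotsc,R_{d})\big)
\]
as an identity in $V[t]$. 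Specializing at $t=0$ yields $Q\cdot\det(W_{\mathbi{n}}(\mathbi{R}))_{\vert t=0}=Q(0)^{d}\det(W_{\mathbi{n}}(\mathbi{R}))_{\vert t=0}$; but $Q(0)^{d}$ is precisely the degree-$d$ part of $Q^{d}$ evaluated consistently with the convention $Q^{d}\cdot(-)$, and tracking the conventions carefully one gets exactly $Q^{d}P$. I expect the main obstacle to be purely bookkeeping: making the identification between the formal derivation variable $T$ used in the definition of $Q\cdot P$ and the auxiliary variable $t$ used in the Wronskian precise enough that the factor-$Q^{d}$ computation is rigorous rather than heuristic, and checking that the $t=0$ specialization interacts correctly with the $Q\mapsto Q^{d}$ convention. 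The linear-algebra core (factoring $Q^{d}$ out of a Wronskian) is classical and short; the care lies entirely in the formal-variable conventions.
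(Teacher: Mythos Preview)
Your overall strategy --- recognize \(Q\cdot W_{\mathbi{n}}(\mathbi{R})\) as the Wronskian-type matrix built from the entries \(QR_{k}X_{n_{k}}\), then factor \(Q^{d}\) out of the determinant by row operations --- is exactly the paper's argument. The one genuine problem is your decision to \emph{identify} the variable \(T\) carrying \(Q\) with the auxiliary variable \(t\). This is what makes the last step fail: after that identification, specializing \(t=0\) also kills \(T\), and you land on \(Q(0)^{d}\), which is only the constant term of \(Q^{d}\). The sentence ``tracking the conventions carefully one gets exactly \(Q^{d}P\)'' does not recover the lost information; as written, you have only proved the identity \((Q\cdot P)\vert_{T=0}=Q(0)^{d}P\), which is strictly weaker than differential homogeneity.

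The fix is simply not to identify the two variables. Keep \(t\) and \(T\) separate throughout: the trivariate Leibniz identity (expand \((ABC)^{(r)}\) two ways) still shows that the \(k\)-th entry of \(Q\cdot L_{\mathbi{n}}(\mathbi{R})^{(r)}\) equals \(\sum_{j}\binom{r}{j}Q(T)^{(r-j)}\,(R_{k}(t)X_{n_{k}})^{(j)}\). Hence row \(r\) of the transformed matrix is \(\sum_{j\le r}\binom{r}{j}Q(T)^{(r-j)}\cdot\bigl(\text{row }j\text{ of }W_{\mathbi{n}}(\mathbi{R})\bigr)\): a lower-triangular row transformation with all diagonal entries equal to \(Q(T)\). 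Therefore
\[
Q\cdot\det\bigl(W_{\mathbi{n}}(\mathbi{R})\bigr)=Q(T)^{d}\,\det\bigl(W_{\mathbi{n}}(\mathbi{R})\bigr)
\]
as an identity in \(V[t,T]\). Now the specialization \(t=0\) leaves the factor \(Q(T)^{d}=Q^{d}\) untouched, and you conclude directly. This is precisely how the paper proceeds.
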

\begin{proof}
Let \(Q \in \C[T]\). From the definition of the action of \(Q\) on \(V\) (defined in the Introduction) and from the very definition of \(L_{\mathbi{n}}^{(r)}(\mathbi{R})\), observe that the following equality holds
\[
Q \cdot L_{\mathbi{n}}^{(r)}(\mathbi{R})
=
(QL_{\mathbi{n}}(\mathbi{R}))^{(r)}.
\]
Therefore, one deduces that the polynomial
 \[
 Q \cdot \det(W_{\mathbi{n}}(\mathbi{R})) \in V[t,T]
 \]
is equal to the determinant of the \(d\times d\) square matrix whose \(r\)th line is equal to \((QL_{\mathbi{n}}(\mathbi{R}))^{(r)}\).
Using elementary operations on the lines as well as the anti-linearity of the determinant, one deduces that 
\[
Q \cdot \det(W_{\mathbi{n}}(\mathbi{R}))
=
Q^{d} \det(W_{\mathbi{n}}(\mathbi{R})).
\]
This shows that each coefficient in \(t\) of \(\det(W_{\mathbi{n}}(\mathbi{R}))\) is indeed differentially homogeneous of degree \(d\), and so does the evaluation at \(t=0\).
\end{proof}

As observed by Reinhart in \cite{Reinhart_1999}, one can extract a free family of \((N+1)^{d}\) differentially homogeneous polynomials as follows\footnote{In his paper, the detailed construction is different from the one presented here, but they amount to the same. I take this opportunity to thank Gleb Pogudin, as he indicated me the construction presented here.}:
\begin{proposition}
\label{prop: family}
Consider the following data \(\mathcal{P}\):
\begin{itemize}
\item{} A \((N+1)\)-uple \(\mathbi{m}=(m_{0}, \dotsc, m_{N}) \in \N^{N+1}\) such that \(\abs{\mathbi{m}}=d\). Let 
\[
i_{1} < \dotsb < i_{r}
\]
be the ordered set of indexes \(i\) such that \(m_{i} \neq 0\).
\item{}
For any \(1 \leq \ell \leq r\), an increasing sequence of \(m_{i_{\ell}}\) integers satisfying:
\[
0 \leq \alpha_{i_{\ell},1} < \dotsb < \alpha_{i_{\ell},m_{i_{\ell}}} < m_{i_{1}}+\dotsb+m_{i_{\ell}}.
\]
\end{itemize}
To any such data \(\mathcal{P}\), denote by \(W_{\mathcal{P}}\) the following differentially homogeneous polynomial of degree \(d\)
\[
W_{\mathcal{P}}
\bydef
\Wronsk(t^{\alpha_{i_{1},1}}X_{i_{1}}, \dotsc, t^{\alpha_{i_{1},m_{i_{1}}}}X_{i_{1}}, \dotsc,  t^{\alpha_{i_{r},1}}X_{i_{r}}, \dotsc, t^{\alpha_{i_{r},m_{i_{r}}}}X_{i_{r}}).
\]
Then, the family \((W_{\mathcal{P}})_{\mathcal{P}}\) forms a free family of \((N+1)^{d}\) differentially homogeneous polynomials of degree \(d\) (and order less or equal than \((d-1)\)).
\end{proposition}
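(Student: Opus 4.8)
The plan is to count the data $\mathcal{P}$ first, and then to establish linear independence by means of a carefully chosen leading term. For the count, observe that specifying $\mathcal{P}$ amounts to choosing the composition $\mathbi{m}=(m_0,\dots,m_N)$ with $|\mathbi{m}|=d$, together with, for each index $i_\ell$ on which $m_{i_\ell}\neq 0$, a choice of an $m_{i_\ell}$-element subset of $\{0,1,\dots,m_{i_1}+\dots+m_{i_\ell}-1\}$. Writing $s_\ell\bydef m_{i_1}+\dots+m_{i_\ell}$, the number of such subsets is $\binom{s_\ell}{m_{i_\ell}}=\binom{s_\ell}{s_\ell-s_{\ell-1}}$, so for a fixed $\mathbi{m}$ the number of admissible $(\alpha_{i_\ell,j})$ is $\prod_{\ell=1}^r\binom{s_\ell}{s_{\ell-1}}$. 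A standard telescoping/multinomial identity (or an elementary induction on $r$) gives $\sum\prod_{\ell=1}^r\binom{s_\ell}{s_{\ell-1}}=\binom{d}{m_{i_1},\dots,m_{i_r}}^{\!*}$ type expressions that sum, over all $\mathbi{m}$, to $(N+1)^d$; concretely, the cleaner route is to recognize $\sum_{\mathcal P}1$ as the number of functions $\{1,\dots,d\}\to\{0,\dots,N\}$ via the bijection sending such a function $f$ to the composition $m_i=\#f^{-1}(i)$ together with, for each fiber, the increasing list of ``positions'' recording at which step the running Wronskian absorbs a copy of $X_i$. This bijection is essentially the one underlying the identity $n^d=\sum_{\mathbi\lambda\vdash d}f_{\mathbi\lambda}d_{\mathbi\lambda}(n)$ of Proposition~\ref{prop: RSK}, and I would phrase it directly.

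Each $W_{\mathcal P}$ is differentially homogeneous of degree $d$ and of order $\leq d-1$ by Proposition~\ref{prop: def Wronsk}, so the only real content is linear independence. For this I would introduce a monomial order on the variables $X_i^{(k)}$ (say, order first by $k$, then by $i$) and extract from each $W_{\mathcal P}$ a distinguished monomial. In the Wronskian $\det W_{\mathbi n}(\mathbi R)$ with $\mathbi R=(t^{\alpha_1},\dots,t^{\alpha_d})$ evaluated at $t=0$, the generic term coming from the main diagonal after reindexing is, up to a nonzero scalar (a product of binomial coefficients and factorials coming from the derivatives of $t^{\alpha_j}$ and the constant $1$ in $t$-degree), the monomial $\prod_{j=1}^d X_{n_j}^{(\alpha_j)}$ — here the constraint $\alpha_{i_\ell,j}<s_\ell$ is exactly what makes the $t$-evaluation nonzero, and the strict increase of the $\alpha$'s within each block guarantees the monomial is squarefree in the relevant sense and genuinely appears. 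I would check that this ``leading monomial'' $X^{\mathcal P}\bydef\prod_j X_{n_j}^{(\alpha_j)}$ occurs in $W_{\mathcal P}$ with nonzero coefficient, and that distinct data $\mathcal P$ yield distinct monomials $X^{\mathcal P}$ (the multiset of pairs $(n_j,\alpha_j)$ recovers $\mathbi m$ and the $\alpha$-sequences). Since a family of polynomials whose leading monomials are pairwise distinct is automatically linearly independent, this finishes the argument; and as $\#\{\mathcal P\}=(N+1)^d$ by the count above, the family is free of the asserted size.

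The main obstacle I anticipate is the bookkeeping in the leading-monomial step: one must argue that the specific permutation of the Wronskian expansion producing $\prod_j X_{n_j}^{(\alpha_j)}$ indeed survives the evaluation at $t=0$ (i.e., that the total $t$-degree contributed by the $t^{\alpha}$-factors and their derivatives can be made exactly $0$ on that term, which is where $\alpha_{i_\ell,j}<s_\ell$ is used), and that no other permutation contributes the same monomial with a cancelling sign — equivalently, that the coefficient is a sum of strictly positive quantities or is controlled by a triangularity argument. A clean way around subtle cancellations is to choose the term order so that $X^{\mathcal P}$ is the unique $\leq$-maximal monomial in $W_{\mathcal P}$; establishing that uniqueness, block by block, is the technical heart of the proof. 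Everything else — differential homogeneity, the degree and order bounds, and the combinatorial count — is either already proved (Propositions~\ref{prop: def Wronsk} and~\ref{prop: RSK}) or a routine induction.
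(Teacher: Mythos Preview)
Your overall strategy coincides with the paper's: count the data $\mathcal{P}$ via the multinomial coefficient, and prove linear independence by isolating a distinguished monomial in each $W_{\mathcal P}$ with respect to a suitable term order. However, there is a concrete error in the execution.

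The monomial you single out, $\prod_{j=1}^d X_{n_j}^{(\alpha_j)}$, does not in general appear in $W_{\mathcal P}$. The $(r,k)$-entry of the Wronskian matrix (rows indexed by $r=0,\dots,d-1$) is
\[
(t^{\alpha_k}X_{n_k})^{(r)}\big|_{t=0}=\frac{r!}{(r-\alpha_k)!}\,X_{n_k}^{(r-\alpha_k)}
\]
when $r\ge\alpha_k$, and $0$ otherwise. So in column $k$ the derivative order that occurs is $r-\alpha_k$, never $\alpha_k$ itself; to get your monomial one would need a permutation with $\sigma(k)=2\alpha_k$, which is not available in general. The product of the diagonal entries is, up to a nonzero constant, $\prod_{k=1}^d X_{n_k}^{(k-1-\alpha_k)}$, and it is \emph{this} monomial that the paper uses. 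The constraints on the $\alpha$'s guarantee $\alpha_k\le k-1$ (within the $\ell$-th block the $\alpha$'s are $m_{i_\ell}$ strictly increasing integers below $s_\ell$, so the $j$-th one is at most $s_{\ell-1}+j-1=k-1$), hence the diagonal term is nonzero.

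Relatedly, your proposed order (derivative index first, then variable index) is not the one that makes the diagonal extremal. The paper orders by variable index first, $X_0^{(d-1)}>\dots>X_0^{(0)}>X_1^{(d-1)}>\dots>X_N^{(0)}$, and identifies the diagonal product as the \emph{least} monomial in the induced lexicographic order. This works precisely because of the block structure: the first $m_{i_1}$ columns carry $X_{i_1}$, so assigning them the first $m_{i_1}$ rows minimises the $X_{i_1}$-part, and one then proceeds block by block. Once you have the correct monomial, pairwise distinctness is straightforward: within block $\ell$ the exponents $k-1-\alpha_k$ form a nonincreasing sequence, hence are recoverable from their multiset, and together with the $n_k$'s this reconstructs $\mathcal P$.

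Finally, your counting detour through a bijection and Proposition~\ref{prop: RSK} is unnecessary. The product $\prod_{\ell}\binom{s_\ell}{m_{i_\ell}}$ telescopes directly to the multinomial coefficient $\binom{d}{m_{i_1},\dots,m_{i_r}}$, and summing over all $\mathbi{m}$ with $|\mathbi{m}|=d$ gives $(N+1)^d$ by the multinomial theorem; this is exactly what the paper does.
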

\begin{proof}
First, one easily computes that there are indeed \((N+1)^{d}\) such data \(\mathcal{P}\). Indeed, for a fixed \((N+1)\)-uple \(\mathbi{m}\) as in the statement, there are:
\[
\binom{d}{m_{i_{r}}} \binom{d-m_{i_{r}}}{m_{i_{r-1}}} \dotsb \binom{d-(m_{i_{r}}+\dotsb +m_{i_{2}})}{m_{i_{1}}}
=
\binom{d}{m_{i_{1}}, \dotsc, m_{i_{r}}}
\]
ways of creating sequences of integers satisfying the second item in the statement (one has used the usual notation for multinomial coefficients). The counting of the data \(\mathcal{P}\) follows from Newton multinomial formula.

To see that the constructed differentially homogeneous polynomials of degree \(d\) are independent, proceed as follows. Put the following order on the variables \((X_{i}^{(k)})\):
\[
X_{0}^{(d-1)} > \dotsb > X_{0}^{(0)} > X_{1}^{(d-1)} > \dotsb > X_{N}^{(0)}.
\]
This induces a total order on the monomials by first comparing the term of higher order in \(X_{0}^{(\cdot)}\), then the degree in that factor, and so on. Now, the key observation is that monomials of least order in the polynomials \(W_{\mathcal{P}}\)'s are pairwise distinct, and respectively equal to the product of the entries on the diagonal of the defining matrix. This shows the linear independency, and finishes the proof of the proposition.
\end{proof}
\begin{example}
Take \(N=2\), \(d=6\), and consider \(\Wronsk(X_{0},tX_{0}, tX_{1},t^{3}X_{1}, X_{2}, t^{4}X_{2})\). This differentially homogeneous polynomial is given by the determinant of the following matrix:
\[
\begin{pmatrix}
X_{0}^{(0)} & 0 & 0 & 0 & X_{2}^{(0)} & 0
\\
X_{0}^{(1)} & X_{0}^{(0)} & X_{1}^{(0)} & 0 & X_{2}^{(1)} & 0
\\
X_{0}^{(2)} & 2X_{0}^{(1)} & 2X_{1}^{(1)} & 0 & X_{2}^{(2)} & 0
\\
X_{0}^{(3)} & 3X_{0}^{(2)} & 3X_{1}^{(2)} & 6X_{1}^{(0)} & X_{2}^{(3)} & 0
\\
X_{0}^{(4)} & 4X_{0}^{(3)} & 4X_{1}^{(3)} & 24X_{1}^{(1)} & X_{2}^{(4)} & 24X_{2}^{(0)}
\\
X_{0}^{(5)} & 5X_{0}^{(4)} & 5X_{1}^{(4)} & 60X_{1}^{(2)} & X_{2}^{(5)} & 120X_{2}^{(1)}
\end{pmatrix}.
\]
\end{example}
Denote by 
\[
\overline{V_{d}^{\Diff}} \bydef \Span_{\C}((W_{\mathcal{P}})_{\mathcal{P}})
\]
the \((N+1)^{d}\)-dimensional \(\C\)-vector space spanned by the family given in the previous Proposition \ref{prop: family}. The differential polynomials \((W_{\mathcal{P}})_{\mathcal{P}}\) will be referred to as the \textsl{canonical basis of \(\overline{V_{d}^{\Diff}}\)}.

According to the Schmidt--Kolchin conjecture, the vector space \(\overline{V_{d}^{\Diff}}\) should be the whole vector space of differentially homogeneous polynomials of degree \(d\). In particular, by Lemma \ref{lemma: elementary}, this should be a representation of the general linear group \(\GL_{N+1}(\C)\). Proving that it is indeed the case is the object of the next section.
\begin{remark}
There are other natural families of differentially homogeneous polynomials that one can construct. For instance, one can consider, for \(\theta \in \C^{*}\), the following family:
\[
\Big\{\Wronsk\big(X_{n_{1}},(\theta+t)X_{n_{2}}, \dotsc, (\theta+t)^{d-1}X_{n_{d}})\big) \ | \ \mathbi{n}=(n_{1}, \dotsc, n_{d}) \in \Set{0, \dotsc, N}^{d}\Big\}.
\]
Whereas one immediately sees that the vector space spanned by the previous family defines is a sub-representation of \(V_{d}^{\Diff}\), it is not obvious at all to determine its dimension. We conjecture that, for a generic value of \(\theta\), this is \((N+1)^{d}\)-dimensional, but we were not able to prove it.
\end{remark}

\subsubsection{Invariance under the action of the general linear group.}
\label{subs: invariance}
The sought invariance of \(\overline{V_{d}^{\Diff}}\) under the action of \(\GL_{N+1}(\C)\) will follow from more general considerations that we develop in Appendix \ref{appendix: A}. We chose to put this part in an appendix because, in itself, it has little to do with differentially homogeneous polynomials. It is rather a more general statement about families of determinants of a certain shape. 

For now, let us state a corollary of the main statement in Appendix \ref{appendix: A}, which will be useful for our purposes. Let us fix \(Y_{1}, \dotsc, Y_{d}\) formal variables, along with their formal derivatives \(Y_{1}^{(k)}, \dotsc, Y_{d}^{(k)}\), with \(k\) a natural number. We consider the following family of differentially homogeneous polynomials of degree \(d\):
\[
\mathcal{F}_{d}
\bydef 
\big(\Wronsk(t^{\alpha_{1}}Y_{1}, \dotsc, t^{\alpha_{d}}Y_{d})\big)_{0 \leq \alpha_{i} \leq d-1},
\]
as well as the following subfamily:
\[
\tilde{\mathcal{F}_{d}} 
\bydef
\big(\Wronsk(t^{\alpha_{1}}Y_{1}, \dotsc, t^{\alpha_{d}}Y_{d})\big)_{0 \leq \alpha_{i} \leq i-1}.
\]
\begin{remark}
Note that the canonical basis of \(\overline{V_{d}^{\Diff}}\) defined in the previous Section \ref{subs: construction} is obtained from the family \(\tilde{\mathcal{F}_{d}}\) by suitable substitutions \(Y_{i}=X_{n_{i}}\), \(n_{i} \in \Set{0, \dotsc, N}\).
\end{remark}
\begin{remark}
If one the \(\alpha_{i}\)'s in the previous definition is taken larger or equal than \(d\), then it is easily seen that the resulting differential polynomial is identically zero.
\end{remark}
As a corollary of Proposition \ref{prop: appendix A} and remark \ref{remark: appendix A}, one deduces immediately the following:
\begin{proposition}
\label{prop: main prop stab}
The \(\C\)-vector space spanned by \(\tilde{\mathcal{F}_{d}}\) is the same than the one spanned by \(\mathcal{F}_{d}\).
\end{proposition}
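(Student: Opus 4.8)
The plan is to reduce the statement to a fact about determinants of a single regular nilpotent operator, and then to establish that fact by a straightening procedure. The inclusion $\Span_{\C}\tilde{\mathcal{F}_{d}} \subseteq \Span_{\C}\mathcal{F}_{d}$ is immediate since $\tilde{\mathcal{F}_{d}}$ is a subfamily of $\mathcal{F}_{d}$, so the whole point is the reverse inclusion: every $\Wronsk(t^{\alpha_{1}}Y_{1}, \dotsc, t^{\alpha_{d}}Y_{d})$ with $0 \leq \alpha_{i} \leq d-1$ must be shown to lie in $\Span_{\C}\tilde{\mathcal{F}_{d}}$. For this I would first unwind Proposition~\ref{prop: def Wronsk}: evaluating the defining matrix at $t = 0$, its $k$th column involves only the single variable $Y_{k}$ and equals $M^{\alpha_{k}}c_{k}$, where $c_{k}$ denotes the column vector $(Y_{k}^{(0)}, \dotsc, Y_{k}^{(d-1)})$ and $M$ is the fixed $d \times d$ nilpotent matrix given by $Me_{j} = (j+1)e_{j+1}$ for $0 \leq j \leq d-2$ and $Me_{d-1} = 0$ — i.e.\ a regular nilpotent, a single Jordan block up to rescaling. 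Hence
\[
\Wronsk(t^{\alpha_{1}}Y_{1}, \dotsc, t^{\alpha_{d}}Y_{d}) = \det\big(M^{\alpha_{1}}c_{1} \mid \dotsb \mid M^{\alpha_{d}}c_{d}\big),
\]
and the proposition becomes the following linear-algebra statement: for $M$ a regular nilpotent on $\C^{d}$ and $c_{1}, \dotsc, c_{d}$ generic vectors built from disjoint sets of indeterminates, the determinants $\det(M^{\alpha_{1}}c_{1} \mid \dotsb \mid M^{\alpha_{d}}c_{d})$ with $\alpha_{i} \leq i-1$ span, over $\C$, the space of all such determinants for $\alpha \in \N^{d}$ — those with some $\alpha_{i} \geq d$ vanishing identically because $M^{d} = 0$.

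The relations I would exploit come from the nilpotency of $M$: for any polynomial $P$ one has $\det P(M) = P(0)^{d}$, so $\det(P(M)v_{1} \mid \dotsb \mid P(M)v_{d}) = \det(v_{1} \mid \dotsb \mid v_{d})$ as soon as $P(0) = 1$. Fixing a subset $J \subseteq \{1, \dotsc, d\}$, substituting $v_{i} = P(M)^{-1}c_{i}$ for $i \in J$ and $v_{i} = c_{i}$ otherwise (note that $P(M)^{-1}$ is again a polynomial in $M$ with constant term $1$), and then specializing $P(x) = (1-tx)^{-1} = 1 + tx + t^{2}x^{2} + \dotsb$, one obtains, after expanding both sides in powers of $t$ and comparing coefficients, the family of identities
\[
\sum_{\substack{\alpha_{i} = 0\ (i \in J)\\ |\alpha| = s}} \det\big(M^{\alpha_{1}}c_{1} \mid \dotsb \mid M^{\alpha_{d}}c_{d}\big) = (-1)^{s} \sum_{\substack{\beta_{i} = 0\ (i \notin J),\ \beta_{i} \in \{0,1\}\ (i \in J)\\ |\beta| = s}} \det\big(M^{\beta_{1}}c_{1} \mid \dotsb \mid M^{\beta_{d}}c_{d}\big),
\]
valid for every $s \geq 0$; in particular the right-hand side is an empty sum, hence $0$, whenever $s > |J|$ (the case $J = \emptyset$ already giving $\sum_{|\alpha| = s}\det(M^{\alpha_{1}}c_{1} \mid \dotsb) = 0$ for every $s \geq 1$).

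With these relations at hand I would run a straightening induction on the defect $D(\alpha) \bydef \sum_{i} \max(\alpha_{i} - (i-1),\, 0)$, which vanishes exactly on the members of $\tilde{\mathcal{F}_{d}}$: given $\alpha$ with $D(\alpha) > 0$ one picks the column block $J$ for which the identity above isolates an offending exponent and rewrites $\det(M^{\alpha_{1}}c_{1} \mid \dotsb \mid M^{\alpha_{d}}c_{d})$ as a $\C$-linear combination of determinants of strictly smaller defect; iterating drives everything into $\tilde{\mathcal{F}_{d}}$. I expect the choice of the right block $J$ — so that the rewrite both decreases the defect and produces only admissible tuples — to be the main obstacle; this is precisely the combinatorial content isolated, in a self-contained and more general form, as Proposition~\ref{prop: appendix A} together with Remark~\ref{remark: appendix A} in Appendix~\ref{appendix: A}, and granting that result the proposition follows at once. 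An alternative way to finish, bypassing the explicit straightening, is to combine the linear independence of $\tilde{\mathcal{F}_{d}}$ — which follows from Proposition~\ref{prop: family} upon specializing the formal variables $Y_{i}$ to distinct coordinates $X_{n_{i}}$ (harmless since the statement does not involve $N$, and a polynomial identity among the $Y_{i}$ survives any such substitution) — with the upper bound $\dim_{\C}\Span_{\C}\mathcal{F}_{d} \leq d!$ extracted from the same family of relations, and then conclude by equality of dimensions.
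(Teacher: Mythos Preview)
Your proposal is correct and follows the same overall architecture as the paper's Appendix~\ref{appendix: A}: rewrite $\Wronsk(t^{\alpha_{1}}Y_{1},\dotsc,t^{\alpha_{d}}Y_{d})$ as $\det(M^{\alpha_{1}}c_{1}\mid\dotsb\mid M^{\alpha_{d}}c_{d})$ for a regular nilpotent $M$ (this is Remark~\ref{remark: appendix A}), produce linear relations among these determinants, and then straighten column by column. The straightening you defer to Proposition~\ref{prop: appendix A} is carried out there exactly as you anticipate, reducing $\alpha_{1}$ to $0$, then $\alpha_{2}$ to at most $1$, and so on; your defect function $D(\alpha)$ organises the same induction.

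Where you genuinely differ is in how the relations are obtained. The paper proves the wedge identity $\sum_{|\alpha|=j} N^{\alpha_{1}}v_{1}\wedge\dotsb\wedge N^{\alpha_{d-i+1}}v_{d-i+1}=0$ (Lemma~\ref{lemma: general identity}) by a case-by-case induction on $d$, and then extracts Corollary~\ref{cor: identities}. Your generating-function argument---using $\det P(M)=P(0)^{d}$ with $P(x)=(1-tx)^{-1}$, so that $P(M)^{-1}=I-tM$, and comparing coefficients of $t^{s}$---delivers the whole family of relations (for every $J$ and every $s>|J|$) in a single stroke, with no induction. After the substitution $c_{k}\mapsto M^{\beta_{k}}c_{k}$ for $k\in J=\{1,\dotsc,i-1\}$, your identity specialises precisely to Corollary~\ref{cor: identities} (and its shifts), which is all the straightening consumes. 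This is a cleaner route to the relations than the paper's Lemma~\ref{lemma: general identity}; the price is that it is specific to top-degree wedges (determinants), whereas the paper's lemma lives in $\bigwedge^{d-i+1}\C^{d}$.

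One caveat on your closing ``alternative'': the bound $\dim_{\C}\Span_{\C}\mathcal{F}_{d}\leq d!$ cannot be read off the relations by a mere count, since dependencies among them are not controlled. The only way I see to obtain it is precisely the straightening, which exhibits $\tilde{\mathcal{F}_{d}}$ as a spanning set. So the dimension argument does not bypass Proposition~\ref{prop: appendix A}; and once spanning is established, linear independence of $\tilde{\mathcal{F}_{d}}$ is not needed for the statement at hand.
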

\begin{proof}
See Appendix \ref{appendix: A}.
\end{proof}
This proposition implies in turn the following:
\begin{theorem}
The \(\C\)-vector space \(\overline{V_{d}^{\Diff}}\) is a sub-representation of \(V_{d}^{\Diff}\) (for the natural action of the general linear group \(\GL_{N+1}(\C)\)).
\end{theorem}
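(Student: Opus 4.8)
The plan is to deduce the theorem directly from Proposition \ref{prop: main prop stab}, by showing that the natural $\GL_{N+1}(\C)$-action carries the canonical basis of $\overline{V_{d}^{\Diff}}$ into the span of the full family $\mathcal{F}_{d}$ (after substitution), which by the proposition coincides with the span of $\tilde{\mathcal{F}_{d}}$, i.e. with $\overline{V_{d}^{\Diff}}$ itself. So the first step is to recall, from the remark following Proposition \ref{prop: main prop stab}, that each element $W_{\mathcal{P}}$ of the canonical basis is of the form $\Wronsk(t^{\alpha_{1}}X_{n_{1}}, \dotsc, t^{\alpha_{d}}X_{n_{d}})$ with $0 \leq \alpha_{i} \leq i-1$ and $(n_{1}, \dotsc, n_{d})$ a nondecreasing sequence in $\Set{0, \dotsc, N}$; more generally, $\Span_{\C}((W_{\mathcal{P}})_{\mathcal{P}})$ equals the span of \emph{all} $\Wronsk(t^{\alpha_{1}}X_{n_{1}}, \dotsc, t^{\alpha_{d}}X_{n_{d}})$ with $0 \leq \alpha_{i} \leq d-1$ and arbitrary $\mathbi{n} \in \Set{0, \dotsc, N}^{d}$ — indeed, applying Proposition \ref{prop: main prop stab} with the substitution $Y_{i} = X_{n_{i}}$ for each fixed $\mathbi{n}$ shows that this larger-looking family still spans $\overline{V_{d}^{\Diff}}$ (permuting the columns $Y_i$ only changes the Wronskian by a sign, so one may reduce an arbitrary $\mathbi{n}$ to a nondecreasing one).

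The second step is to compute the effect of $A \in \GL_{N+1}(\C)$ on a generator $\Wronsk(t^{\alpha_{1}}X_{n_{1}}, \dotsc, t^{\alpha_{d}}X_{n_{d}})$. By Proposition \ref{prop: def Wronsk} this Wronskian is $\det(W)_{|t=0}$ for the $d \times d$ matrix $W$ whose $r$th row is $L^{(r)}$, $L = (t^{\alpha_{1}}X_{n_{1}}, \dotsc, t^{\alpha_{d}}X_{n_{d}})$. The action $A\cdot(-)$ replaces each $X_{n_{k}}$ by $(AX)_{n_{k}} = \sum_{j=0}^{N} a_{n_{k} j} X_{j}$, and since the construction of the Wronskian is entry-wise $\C$-linear in each column (the operation $\sum_{j} \binom{r}{j} R_{k}^{(r-j)} X_{n_{k}}^{(j)}$ commutes with replacing $X_{n_{k}}$ by a linear combination $\sum_j a_{n_k j} X_j$), multilinearity of the determinant in its columns gives
\[
A \cdot \Wronsk(t^{\alpha_{1}}X_{n_{1}}, \dotsc, t^{\alpha_{d}}X_{n_{d}})
=
\sum_{j_{1}, \dotsc, j_{d} = 0}^{N}
a_{n_{1} j_{1}} \dotsb a_{n_{d} j_{d}}\,
\Wronsk(t^{\alpha_{1}}X_{j_{1}}, \dotsc, t^{\alpha_{d}}X_{j_{d}}).
\]
Each summand on the right is again one of the generators described in the first step (with exponents $\alpha_i$ in $[0,d-1]$ and an arbitrary index vector $\mathbi{j} \in \Set{0,\dotsc,N}^d$), hence lies in $\Span_{\C}(\mathcal{F}_{d}\text{-type generators}) = \overline{V_{d}^{\Diff}}$. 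Therefore $A \cdot W_{\mathcal{P}} \in \overline{V_{d}^{\Diff}}$ for every $\mathcal{P}$ and every $A$, which is exactly the assertion that $\overline{V_{d}^{\Diff}}$ is a sub-representation; that it is a subrepresentation \emph{of} $V_{d}^{\Diff}$ follows from Lemma \ref{lemma: elementary}, since each $W_{\mathcal{P}}$ is differentially homogeneous of degree $d$ and the action preserves this.

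The only genuinely substantive input is Proposition \ref{prop: main prop stab} (proved in Appendix \ref{appendix: A}): without it one only knows that the span of $\tilde{\mathcal{F}}_{d}$ sits inside the $\GL_{N+1}(\C)$-stable span of $\mathcal{F}_{d}$, but not that the two spans agree, so one could not conclude that $\overline{V_{d}^{\Diff}}$ itself is stable. Granting that proposition, the argument above is essentially bookkeeping: the main points to be careful about are (i) that the column-wise construction of $W_{\mathbi{n}}(\mathbi{R})$ really is $\C$-linear in the "vector part" of each column, so that multilinearity of $\det$ applies after the change of variables and commutes with evaluation at $t=0$, and (ii) the reduction, via a column permutation and a sign, from an arbitrary index vector $\mathbi{j}$ to the nondecreasing vectors indexing the canonical basis. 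Neither presents any real difficulty. I expect the write-up to be short.
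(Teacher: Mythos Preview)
Your proposal is correct and follows essentially the same route as the paper's own proof: both reduce the question to Proposition \ref{prop: main prop stab}, expand $A\cdot W_{\mathcal{P}}$ by multilinearity of the determinant in the columns, and then use the proposition (after substituting $Y_i=X_{n_i}$) together with a column permutation to land back in the canonical basis up to sign (or on zero, when two exponents in a block coincide). The only cosmetic difference is that you front-load the identification of $\overline{V_{d}^{\Diff}}$ with the span of all $\Wronsk(t^{\alpha_1}X_{n_1},\dotsc,t^{\alpha_d}X_{n_d})$ with $0\le\alpha_i\le d-1$, whereas the paper performs the reduction at the end; the content is the same.
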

\begin{proof}
Consider a data \(\mathcal{P}\) as described in Proposition \ref{prop: family}, and the differential polynomial \(W_{\mathcal{P}}\) associated to it:
\[
W_{\mathcal{P}}
=
\Wronsk(t^{\alpha_{i_{1},1}}X_{i_{1}}, \dotsc, t^{\alpha_{i_{1},m_{i_{1}}}}X_{i_{1}}, \dotsc,  t^{\alpha_{i_{r},1}}X_{i_{r}}, \dotsc, t^{\alpha_{i_{r},m_{i_{r}}}}X_{i_{r}}).
\]
Denoting
\[
\mathbi{\alpha}
=
(\alpha_{i_{1},1}, \dotsc, \alpha_{i_{r}, m_{r}}) \in \N^{d},
\]
observe that, by construction, one has the inequality
 \(
 \alpha_{i} \leq i-1
 \)
for any \(1 \leq i \leq d\). Accordingly, the differentially homogeneous polynomial \(W_{\mathcal{P}}\) is nothing but
\[
\Wronsk(t^{\alpha_{1}}Y_{1}, \dotsc, t^{\alpha_{d}}Y_{d})
\]
after the evaluations 
\[
Y_{1}=\dotsc=Y_{m_{i_{1}}}=X_{i_{1}}, Y_{m_{i_{1}}+1}=\dotsb=Y_{m_{i_{1}}+m_{i_{2}}}=X_{i_{2}}, \dotsc .
\]
Reciprocally, being given
\[
\Wronsk(t^{\alpha_{1}}Y_{1}, \dotsc, t^{\alpha_{d}}Y_{d})
\]
with \(\alpha_{i} \leq i-1\) for any \(1 \leq i \leq d-1\), this yields, \textsl{up to sign}, a differential polynomial in the canonical basis of \(\overline{V_{d}^{\Diff}}\) after evaluations of the form
\[
Y_{1}=\dotsc=Y_{m_{i_{1}}}=X_{i_{1}}, Y_{m_{i_{1}}+1}=\dotsb=Y_{m_{i_{1}}+m_{i_{2}}}=X_{i_{2}}, \dotsc ,
\]
provided that the following holds:
\begin{itemize}
\item{}
\(\alpha_{1}, \dotsc, \alpha_{m_{1}}\) are pairwise distinct;
\item{}
\(\alpha_{m_{1}+1}, \dotsc, \alpha_{m_{1}+m_{2}}\) are pairwise distinct;
\item{}
\textsl{etc}.
\end{itemize}
If these conditions are not satisfied, then the evaluation yields trivially zero by anti-linearity of the determinant.

Now, if one lets an invertible matrix \(A \in \GL_{N+1}(\C)\) act on \(W_{\mathcal{P}}\), then, by expanding out, one finds that \(A \cdot W_{\mathcal{P}}\) writes as a linear combination of terms of the form
\[
W\bydef \Wronsk(t^{\beta_{1}}Y_{1}, \dotsc, t^{\beta_{d}}Y_{d})_{\vert Y_{0}=X_{n_{0}}, \dotsc, Y_{d}=X_{n_{d}}}
\]
for some \(0 \leq \beta_{i}\leq d-1\) and \(0 \leq n_{0} \leq \dotsb \leq n_{d-1} \leq N\). By Proposition \ref{prop: main prop stab}, one can suppose that such a term actually satisfies the inequalities
\[
\beta_{i} \leq i-1
\]
for any \(1 \leq i \leq d-1\). By the above, the differential polynomial \(W\) is either zero, or an element in \(\overline{V_{d}^{\Diff}}\). This proves that \(\overline{V_{d}^{\Diff}}\) is left stable under the action of \(\GL_{N+1}(\C)\). It is therefore a sub-representation of \(V_{d}^{\Diff}\), so that the proof is complete.
\end{proof}

For our purposes, the following corollary will be of particular importance:
\begin{proposition}
\label{prop: estimate under}
For any partition \(\mathbi{\lambda} \vdash d\) with at most \((N+1)\) parts, the following inequality holds:
\[
\dim \big(V^{\Diff}_{\hw(\mathbi{\lambda})}\big)^{(d-1)}
\geq f_{\mathbi{\lambda}},
\]
where one recalls that \(V^{\Diff}_{\hw(\mathbi{\lambda})}\) is the vector space of \(\mathbi{\lambda}\)-highest weight vectors in \(V_{d}^{\Diff}\).
\end{proposition}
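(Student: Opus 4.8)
The plan is to combine the previous theorem (that $\overline{V_d^{\Diff}}$ is a $\GL_{N+1}(\C)$-sub\-representation of $V_d^{\Diff}$) with the representation-theoretic dictionary from Section~\ref{subs: linear}. Since $\overline{V_d^{\Diff}}$ is a finite-dimensional polynomial representation of $\GL_{N+1}(\C)$, it decomposes as a direct sum of irreducible Schur powers $S^{\mathbi{\mu}}\C^{N+1}$ with multiplicities, say $\overline{V_d^{\Diff}} \simeq \bigoplus_{\mathbi{\mu}} (S^{\mathbi{\mu}}\C^{N+1})^{\oplus c_{\mathbi{\mu}}}$; moreover, because every element of $\overline{V_d^{\Diff}}$ is homogeneous of degree $d$ in $X$ (differential homogeneity implies ordinary homogeneity), only partitions $\mathbi{\mu}\vdash d$ with at most $N+1$ parts can appear. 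The dimension of the space of $\mathbi{\lambda}$-highest weight vectors in $\overline{V_d^{\Diff}}$ is then exactly the multiplicity $c_{\mathbi{\lambda}}$, so the inequality I must establish reduces to showing $c_{\mathbi{\lambda}} \geq f_{\mathbi{\lambda}}$ for every $\mathbi{\lambda}\vdash d$ with $\leq N+1$ parts, after which I invoke $\overline{V_d^{\Diff}}\subset V_d^{\Diff}$ and the fact that all the $W_{\mathcal{P}}$ have order $\leq d-1$ to get the statement with the $(d-1)$-superscript.

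To pin down the multiplicities $c_{\mathbi{\lambda}}$, I would compute the character of $\overline{V_d^{\Diff}}$. The cleanest route: the canonical basis $(W_{\mathcal{P}})_{\mathcal{P}}$ consists of weight vectors. Indeed, a diagonal matrix $\Diag(x_0,\dots,x_N)$ sends each variable $X_i^{(k)}$ to $x_i X_i^{(k)}$, hence sends $W_{\mathcal{P}}$, built as $\Wronsk$ of entries involving $X_{i_1},\dots,X_{i_r}$ with multiplicities $m_{i_1},\dots,m_{i_r}$, to $x_{i_1}^{m_{i_1}}\cdots x_{i_r}^{m_{i_r}}\,W_{\mathcal{P}}$. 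So the character of $\overline{V_d^{\Diff}}$ is $\sum_{\mathcal{P}} \mathbi{x}^{\mathbi{m}(\mathcal{P})} = \sum_{\mathbi{m},\,|\mathbi{m}|=d} \binom{d}{m_0,\dots,m_N}\mathbi{x}^{\mathbi{m}} = (x_0+\dots+x_N)^d$, using the count of data $\mathcal{P}$ with fixed $\mathbi{m}$ established in the proof of Proposition~\ref{prop: family}. But $(x_0+\dots+x_N)^d$ is the character of $(\C^{N+1})^{\otimes d}$, whose decomposition is governed by Theorem~\ref{thm: struct gen}: $(\C^{N+1})^{\otimes d} \simeq \bigoplus_{\mathbi{\lambda}\vdash d} (S^{\mathbi{\lambda}}\C^{N+1})^{\oplus f_{\mathbi{\lambda}}}$, the sum over partitions with at most $N+1$ parts. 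Since a polynomial representation is determined by its character, $\overline{V_d^{\Diff}} \simeq (\C^{N+1})^{\otimes d}$ as $\GL_{N+1}(\C)$-representations, and in particular $c_{\mathbi{\lambda}} = f_{\mathbi{\lambda}}$ for each such $\mathbi{\lambda}$.

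Putting it together: $\dim\big(\overline{V_d^{\Diff}}\big)_{\hw(\mathbi{\lambda})} = f_{\mathbi{\lambda}}$, and since $\overline{V_d^{\Diff}} \subseteq (V_d^{\Diff})^{(d-1)} \subseteq V_d^{\Diff}$ is an inclusion of $\GL_{N+1}(\C)$-representations compatible with the diagonal and Borel actions, every $\mathbi{\lambda}$-highest weight vector of $\overline{V_d^{\Diff}}$ is a $\mathbi{\lambda}$-highest weight vector of $(V_d^{\Diff})^{(d-1)}$, giving $\dim\big(V_{\hw(\mathbi{\lambda})}^{\Diff}\big)^{(d-1)} \geq f_{\mathbi{\lambda}}$. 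The only point requiring a little care — and what I expect to be the main obstacle — is the bookkeeping that the canonical basis elements are genuinely weight vectors under the diagonal torus with the weights claimed, and that the multinomial count of Proposition~\ref{prop: family} matches term-by-term the coefficients of $(x_0+\cdots+x_N)^d$; everything else is a direct application of the cited structure theorems together with the already-proved $\GL_{N+1}(\C)$-invariance of $\overline{V_d^{\Diff}}$.
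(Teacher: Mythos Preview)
Your proposal is correct and follows essentially the same approach as the paper: compute the character of the representation \(\overline{V_d^{\Diff}}\) via its canonical basis of weight vectors, identify it as \((x_0+\dotsb+x_N)^d=\sum_{\mathbi{\lambda}\vdash d} f_{\mathbi{\lambda}} s_{\mathbi{\lambda}}\), and conclude via the inclusion \(\overline{V_d^{\Diff}}\subset (V_d^{\Diff})^{(d-1)}\). The only cosmetic difference is that you phrase the decomposition step as an isomorphism \(\overline{V_d^{\Diff}}\simeq(\C^{N+1})^{\otimes d}\), whereas the paper just reads off the multiplicities from the Schur expansion of the character.
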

\begin{proof}
Now that one knows that \(\overline{V_{d}^{\Diff}}\) is a (finite-dimensional) representation, one can compute its character. Since the canonical basis of \(\overline{V_{d}^{\Diff}}\) is made of weight vectors, one computes immediately that:
\begin{eqnarray*}
\chi_{\overline{V_{d}^{\Diff}}}(x_{0}, \dotsc, x_{N})
&
=
&
\sum\limits_{\substack{\mathbi{m} \in \N^{N+1} \\ \abs{\mathbi{m}}=d}}
\binom{d}{m_{0}, \dotsc, m_{N}}
x_{0}^{m_{0}} \dotsb x_{N}^{m_{N}}
\\
&
=
&
(x_{0}+\dotsb+x_{N})^{d}.
\end{eqnarray*}
Using for instance Theorem \ref{thm: struct gen}, one sees that the symmetric polynomial \((x_{0}+\dotsb+x_{N})^{d}\) writes:
\[
(x_{0}+\dotsb+x_{N})^{d}\
=
\sum\limits_{\mathbi{\lambda} \vdash d} f_{\mathbi{\lambda}}s_{\mathbi{\lambda}}.
\]
Note that, if \(\mathbi{\lambda}\) has more than \((N+1)\) parts, then \(s_{\mathbi{\lambda}}=0\).
Since a representation is uniquely determined by its character, one deduces that \(\overline{V_{d}^{\Diff}}\) has \(f_{\mathbi{\lambda}}\) independent \(\mathbi{\lambda}\)-highest weight vectors. The proposition now follows from the straightforward observation that \(\overline{V_{d}^{\Diff}} \subset (V_{d}^{\Diff})^{(d-1)}\).
\end{proof}

\subsection{A natural basis for the highest weight vectors of \(V_{d}^{(k)}\).}
\label{subs: hw vectors}
Note that there is a tautological isomorphism of representation
\[
V_{d}^{(k)}
\simeq
\bigoplus_{\substack{\mathbi{a} \in \N^{k+1} \\ \abs{\mathbi{a}}=d}}
S^{a_{0}}\C^{N+1} \otimes \dotsb \otimes S^{a_{k}} \C^{N+1}.
\]
Elementary \textsl{plethysm} using the so-called \textsl{Pieri's formula} (see e.g. \cite{Fulton}[I.2.2 \& II.8.3]) tells that \(V_{d}^{(k)}\) decomposes into irreducible representations as follows:
\begin{equation}
\label{eq: sum of rep}
V_{d}^{(k)}
\simeq
\bigoplus_{\mathbi{\lambda} \vdash d}
\bigoplus_{\substack{\mathbi{a} \in \N^{k+1} \\ \abs{\mathbi{a}}=d}}
S^{\mathbi{\lambda}}(\C^{N+1})^{\oplus K_{\mathbi{\lambda}, \mathbi{a}}}.
\end{equation}
An easy corollary of the decomposition \eqref{eq: sum of rep} is the following:
\begin{lemma}
\label{lemma: hw count}
The number of irreducible representations of \(V_{d}^{(k)}\) is equal to
\[
\sum\limits_{\mathbi{\lambda} \vdash d} \dim S^{\mathbi{\lambda}}\C^{k+1}.
\]
\end{lemma}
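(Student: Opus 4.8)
The plan is to read the answer straight off the isotypic decomposition \eqref{eq: sum of rep}, and then to recognize the resulting double sum of Kostka numbers as a sum of dimensions of Schur powers by means of the combinatorial identities recalled in Section \ref{subs: representation}.

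First I would make precise what is being counted: by ``the number of irreducible representations of \(V_{d}^{(k)}\)'' we mean the number of irreducible summands occurring in a decomposition of \(V_{d}^{(k)}\) into irreducible \(\GL_{N+1}(\C)\)-representations, counted \emph{with multiplicity}. With this convention, formula \eqref{eq: sum of rep} says exactly that this number equals
\[
\sum_{\mathbi{\lambda} \vdash d}\ \sum_{\substack{\mathbi{a} \in \N^{k+1} \\ \abs{\mathbi{a}} = d}} K_{\mathbi{\lambda}, \mathbi{a}},
\]
since the summand indexed by a pair \((\mathbi{\lambda}, \mathbi{a})\) contributes precisely \(K_{\mathbi{\lambda}, \mathbi{a}}\) copies of the irreducible representation \(S^{\mathbi{\lambda}}\C^{N+1}\).

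Next I would evaluate the inner sum. The one point to keep track of — and really the whole content of the proof — is that the multi-index \(\mathbi{a}\) here ranges over \(\N^{k+1}\), so it is the integer \(k+1\), coming from the number of derivative-slots \(X^{(0)}, \dotsc, X^{(k)}\), that plays the role of the number of available entries in a tableau, and not \(N+1\). Thus \eqref{eq: Kotska}, applied with \(n = k+1\), gives
\[
\sum_{\substack{\mathbi{a} \in \N^{k+1} \\ \abs{\mathbi{a}} = d}} K_{\mathbi{\lambda}, \mathbi{a}} = d_{\mathbi{\lambda}}(k+1)
\]
for every partition \(\mathbi{\lambda} \vdash d\); and Proposition \ref{prop: dim Schur} identifies \(d_{\mathbi{\lambda}}(k+1)\) with \(\dim S^{\mathbi{\lambda}}\C^{k+1}\). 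Summing over \(\mathbi{\lambda} \vdash d\) yields the claimed equality. (Here we are summing exactly the summands displayed in \eqref{eq: sum of rep}; extending the sum to all partitions of \(d\) changes nothing, since \(K_{\mathbi{\lambda}, \mathbi{a}} = 0\) — equivalently \(d_{\mathbi{\lambda}}(k+1) = 0\) — as soon as \(\mathbi{\lambda}\) has more than \(k+1\) parts.)

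I do not expect any genuine obstacle: the argument is pure bookkeeping once the right identities are in hand. The only things worth stating carefully are the counting convention (summands counted with multiplicity) and the bookkeeping switch, in passing from \eqref{eq: sum of rep} to \eqref{eq: Kotska}, of the relevant parameter from \(N+1\) to \(k+1\) — which is exactly the reason the final answer features \(\C^{k+1}\) rather than \(\C^{N+1}\).
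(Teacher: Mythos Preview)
Your proposal is correct and follows essentially the same route as the paper: read off the multiplicities from \eqref{eq: sum of rep}, then invoke \eqref{eq: Kotska} with \(n=k+1\) together with Proposition \ref{prop: dim Schur} to rewrite \(\sum_{\mathbi{a}} K_{\mathbi{\lambda},\mathbi{a}}\) as \(\dim S^{\mathbi{\lambda}}\C^{k+1}\). Your added remarks on the counting convention and the \(N+1\) versus \(k+1\) bookkeeping are helpful clarifications, but the underlying argument is identical to the paper's.
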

\begin{proof}
This follows from the equality \eqref{eq: sum of rep}, and the following identity:
\begin{equation*}
\sum\limits_{\substack{\mathbi{a} \in \N^{k+1} \\ \abs{\mathbi{a}}=d}} K_{\mathbi{\lambda}, \mathbi{a}}
=
\dim S^{\mathbi{\lambda}}\C^{k+1}
\end{equation*}
(see the equality \eqref{eq: Kotska} and Proposition \ref{prop: dim Schur}).
\end{proof}
In order to define a basis of highest weight vectors inside \(V_{d}^{(k)}\), let us first introduce some notations.
To any sequence of natural numbers \(\mathbi{i}=(i_{0}, \dotsc, i_{r})\), with \(0 \leq r \leq N\), define
\[
D_{\mathbi{i}} 
\bydef
\det
\begin{pmatrix}
X_{0}^{(i_{0})} & \cdot & \cdot & X_{r}^{(i_{0})}
\\
\cdot & & & \cdot
\\
\cdot & & & \cdot
\\
X_{0}^{(i_{r})} & \cdot & \cdot & X_{r}^{(i_{r})}
\end{pmatrix}.
\]
To any Young tableau \(T\) of shape \(\mathbi{\lambda}=(\lambda_{0} \geq \dotsb \geq \lambda_{s})\) with at most  \((N+1)\) rows, filled with the numbers \(\Set{0, \dotsc, k}\), define
\[
D_{T}
\bydef
D_{T(\cdot,1)} \times \dotsb \times D_{T(\cdot, \lambda_{0})}.
\]
Here, for \(1 \leq i \leq \lambda_{0}\), the symbol \(T(.,i)\) represents the sequence of integers read off (from top to bottom) from the \(i\)th column of the Young tableau \(T\). 

Now, for \(\mathbi{\lambda} \vdash d\) a partition of \(d\), consider the following sub-vector space of \(V_{d}^{(k)}\): 
\[
\mathcal{D}_{\mathbi{\lambda}}^{(k)}
\bydef
\Span_{\C}\big\{D_{T} \ | \ \text{T Young tableau of shape \(\mathbi{\lambda}\), filled with \(\Set{0, \dotsc, k}\)}\big\}
\]
One has the following important lemma:
\begin{proposition}
\label{prop: hw vectors}
The following holds:
\begin{enumerate}
\item{}
Let \(\mathbi{\lambda} \vdash d\) be a partition of the integer \(d\) with at most \((N+1)\) parts.
The family
\[
(D_{T})_{T}
\]
where \(T\) runs over \textsl{semi-standard} Young tableaux of shape \(\mathbi{\lambda}\) filled with \(\Set{0, \dotsc, k}\)
\begin{enumerate}[(i)]
\item{} is a free family of \(\mathbi{\lambda}\)-highest weight vectors in \(V_{d}^{(k)}\);
\item{} spans the vector space \(\mathcal{D}_{\mathbi{\lambda}}^{(k)}\), which is canonically isomorphic to \(S^{\mathbi{\lambda}}\C^{k+1}\).
\end{enumerate}

\item{} 
The direct sum
\[
\bigoplus_{\mathbi{\lambda} \vdash d} \mathcal{D}_{\mathbi{\lambda}}^{(k)}
\]
spans all the highest weight vectors of \(V_{d}^{(k)}\) (note that if \(\mathbi{\lambda}\) has more than \((N+1)\) parts, then \(\mathcal{D}_{\mathbi{\lambda}}^{(k)}\) is zero).
\end{enumerate}
\end{proposition}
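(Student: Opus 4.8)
The plan is to prove both assertions by exploiting the interaction between the $\GL_{N+1}(\C)$-action on $V_d^{(k)}$ and the representation-theoretic bookkeeping established in Section~\ref{subs: linear}. I would begin with part (1)(i): given a semistandard Young tableau $T$ of shape $\mathbi{\lambda}$ filled with $\Set{0,\dots,k}$, the element $D_T$ is manifestly a weight vector, and computing its weight is a routine matter of counting how often each symbol $j \in \Set{0,\dots,k}$ appears in $T$—but what matters for the weight in the $\GL_{N+1}(\C)$-variables $x_0,\dots,x_N$ is rather that each factor $D_{T(\cdot,i)}$ is, up to sign, a product of the top-justified minors, so that $D_T$ carries weight $\mathbi{\lambda}=(\lambda_0,\dots,\lambda_s)$ (the $i$-th column of length $\lambda_i'$ contributes one $x_0\cdots x_{\lambda_i'-1}$). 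The highest-weight property—invariance up to scalar under the Borel $B\subset\GL_{N+1}(\C)$—follows because each $D_{\mathbi{i}}=\det(X_m^{(i_\ell)})_{0\le m\le r,\ 0\le\ell\le r}$ is the restriction of a Plücker-type coordinate on the space of $(r+1)\times(N+1)$ matrices, and such minors built on the first $r+1$ columns are exactly the $B$-semi-invariants; the product of such is again a $B$-semi-invariant. (Alternatively, one checks directly that the raising operators $E_{i,i+1}$ annihilate $D_T$.)

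For linear independence in part (1)(i), I would use a leading-monomial argument analogous to the one in the proof of Proposition~\ref{prop: family}: order the variables $X_i^{(k)}$ suitably, extract from each $D_T$ its leading monomial (the product of diagonal entries of each defining determinant block), and observe that distinct semistandard tableaux yield distinct leading monomials—here the semistandardness (strict increase down columns, weak increase along rows) is exactly what makes this book-keeping injective. Part (1)(ii) then follows: since the $D_T$ for $T$ semistandard of shape $\mathbi{\lambda}$ span $\mathcal{D}_{\mathbi{\lambda}}^{(k)}$ by definition (a non-semistandard $D_T$ being either zero, by a repeated row in some column, or—using column-straightening / the Plücker relations—a $\Z$-combination of semistandard ones), and since there are exactly $d_{\mathbi{\lambda}}(k+1)$ such tableaux, which by Proposition~\ref{prop: dim Schur} equals $\dim S^{\mathbi{\lambda}}\C^{k+1}$, and all of them are $\mathbi{\lambda}$-highest weight vectors, the subspace $\mathcal{D}_{\mathbi{\lambda}}^{(k)}$ is a space of $\mathbi{\lambda}$-highest weight vectors of dimension $\dim S^{\mathbi{\lambda}}\C^{k+1}$. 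To identify it canonically with $S^{\mathbi{\lambda}}\C^{k+1}$ one uses the classical fact that the highest-weight vectors of weight $\mathbi{\lambda}$ in a $\GL_{N+1}$-representation $E$ form a module over the commutant, which for $E=V_d^{(k)}$—via the isomorphism $V_d^{(k)}\simeq\bigoplus_{|\mathbi{a}|=d} S^{a_0}\C^{N+1}\otimes\cdots\otimes S^{a_k}\C^{N+1}$ and Schur–Weyl-type duality—is governed by $\GL_{k+1}(\C)$ acting on the "derivative" indices; the span of the $D_T$ is precisely the copy of $S^{\mathbi{\lambda}}\C^{k+1}$ sitting there.

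Part (2) is then a dimension count. By the decomposition~\eqref{eq: sum of rep}, the full space of highest weight vectors of $V_d^{(k)}$ has dimension $\sum_{\mathbi{\lambda}\vdash d}\sum_{|\mathbi{a}|=d}K_{\mathbi{\lambda},\mathbi{a}}=\sum_{\mathbi{\lambda}\vdash d}\dim S^{\mathbi{\lambda}}\C^{k+1}$ (Lemma~\ref{lemma: hw count}), the sum ranging only over $\mathbi{\lambda}$ with at most $N+1$ parts (otherwise $S^{\mathbi{\lambda}}\C^{N+1}=0$). On the other hand $\bigoplus_{\mathbi{\lambda}\vdash d}\mathcal{D}_{\mathbi{\lambda}}^{(k)}$ is a \emph{direct} sum of spaces of highest weight vectors of pairwise distinct weights, hence is genuinely a direct sum inside the highest-weight space, of total dimension $\sum_{\mathbi{\lambda}}\dim S^{\mathbi{\lambda}}\C^{k+1}$ by part (1); matching the two counts forces equality. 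The main obstacle, I expect, is making part (1)(ii)'s canonical-isomorphism claim genuinely canonical rather than just dimensional: one must track the $\GL_{k+1}(\C)$-equivariance of the assignment $T\mapsto D_T$ (the symbols $\Set{0,\dots,k}$ being the $\C^{k+1}$-indices) and verify it intertwines the column-straightening relations with the Plücker/Young-symmetrizer relations defining $S^{\mathbi{\lambda}}\C^{k+1}$—a standard but slightly delicate straightening-law verification. The leading-monomial argument for independence, and the weight computation, are routine by comparison.
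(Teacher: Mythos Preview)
Your proposal is correct and follows essentially the same route as the paper: the paper simply cites \cite{Fulton}[II.8.1, Corollary of Theorem 1] for the freeness, spanning, and the canonical isomorphism $\mathcal{D}_{\mathbi{\lambda}}^{(k)}\simeq S^{\mathbi{\lambda}}\C^{k+1}$ (exactly the straightening and leading-monomial content you sketch), and then derives part~(2) by the same dimension count against Lemma~\ref{lemma: hw count}. Your plan spells out what the citation to Fulton encodes, and your identification of the canonical isomorphism as the only genuinely delicate step is on point.
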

\begin{proof}
Let \(\mathbi{\lambda}\) be a partition with at most \((N+1)\) parts, and let \(T\) be a Young tableau of shape \(\mathbi{\lambda}\) filled with the numbers \(\Set{0, \dotsc, k}\). A straightforward application of the anti-linearity of the determinant shows that \(D_{T}\) is indeed a \(\mathbi{\lambda}\)-highest weight vector in \(V_{d}^{(k)}\).
The fact that the family 
\[
\big(D_{T}\big)_{\text{\(T\) semi-standard Young tableaux of shape \(\mathbi{\lambda}\) filled with \(\Set{0, \dotsc, k}\)}}
\]
 is a free family that spans the vector space \(\mathcal{D}_{\mathbi{\lambda}}^{(k)}\) is in particular the content of \cite{Fulton}[II. 8.1 Corollary of Theorem 1]. This very result also shows that \(\mathcal{D}_{\mathbi{\lambda}}^{(k)}\) is canonically isomorphic to \(S^{\mathbi{\lambda}}\C^{k+1}\).
 
The last part of the statement follows immediately from the previous Lemma \ref{lemma: hw count}: one has indeed exhibited as many independent highest weight vectors as there are irreducible representations in \(V_{d}^{(k)}\).
\end{proof}
Now that we have a basis of highest weight vectors in \(V_{d}^{(k)}\), the goal is the following: evaluate how many independent linear combinations of elements in this basis provide differentially homogeneous polynomials. Tackling this question is the object of the next two Sections \ref{subs: pure algebraic} and \ref{subs: relate}.
\subsection{An intermediate algebraic problem.}
\label{subs: pure algebraic}
Let us start this Section \ref{subs: pure algebraic} with a simple observation. A differentially homogeneous polynomial \(P\) of degree \(d\) must satisfy the following property: for any complex number \(\alpha \in \C\), the following equality holds
\begin{equation*}
P\big((\alpha+T)X, \big((\alpha+T)X\big)^{(1)}, \dotsc\big)_{\vert T=0}
=
\alpha^{d} P\big(X, X^{(1)}, \dotsc \big).
\end{equation*}
This equality is equivalent to saying that, if one makes the substitution 
\begin{equation}
\label{eq: transfo}
X^{(i)} \longleftrightarrow \alpha X^{(i)} + iX^{(i-1)}
\end{equation}
for any \(i \in \N\), then the polynomial \(P\) becomes \(\alpha^{d}P\). One is therefore naturally lead to study differential polynomials satisfying such a property.

The goal of this Section \ref{subs: pure algebraic} is to study a purely algebraic problem related to transformations of the type \eqref{eq: transfo}. We will then relate this algebraic problem to our situation of interest in the next Section \ref{subs: relate}.

\subsubsection{Bounding the dimension of the kernel of a family of nilpotent endomorphisms of \((\C^{k+1})^{\otimes d}\).}
\label{subs: algebraic formulation}
Consider the following nilpotent endomorphism of \(\C^{k+1}\):
\[
J
\bydef
\begin{pmatrix}
0 & 1 & 0 & \cdot & \cdot & 0
\\
0 & 0 & 2 & \cdot & \cdot & 0
\\
\cdot & \cdot & \cdot & \cdot & \cdot & \cdot
\\
\cdot & \cdot & \cdot & \cdot & \cdot & \cdot
\\
\cdot & \cdot & \cdot & \cdot & \cdot & k
\\
0 & 0 & \cdot & \cdot & \cdot & 0
\end{pmatrix}.
\]
Fix \(v\in \C^{k+1}\) a vector such that \(J^{k}v \neq 0\). It induces a natural basis of \(E \bydef (\C^{k+1})^{\otimes d}\) given by:
\[
\Big(\underbrace{J^{\alpha_{1}}v \otimes \dotsb \otimes J^{\alpha_{d}}v}_{\bydef J^{\mathbi{\alpha}}v}\Big)_{0 \leq \alpha_{1}, \dotsc, \alpha_{d} \leq k}.
\]
For any \(1 \leq \ell \leq d\), define an endomorphism \(J^{(\ell)}\) of \(E\) induced by \(J\) as follows. Denote by \((\mathbi{e}_{1}, \dotsc, \mathbi{e}_{d})\) the canonical \(\Z\)-basis of \(\Z^{d}\), and set:
\[
J^{(\ell)}(J^{\mathbi{\alpha}}v)
\bydef
\sum\limits_{1 \leq i_{1} \neq \dotsb \neq i_{\ell}\leq d}J^{\mathbi{\alpha}+\mathbi{e}_{i_{1}}+\dotsb+\mathbi{e}_{i_{\ell}}}v.
\]
Equivalently, if \(v_{1}, \dotsc, v_{d}\) are vectors in \(\C^{k+1}\), the endomorphism \(J^{(\ell)}\) is defined as follows on \(v_{1} \otimes \dotsb \otimes v_{d}\):
\[
J^{(\ell)}(v_{1} \otimes \dotsb \otimes v_{d})
\bydef
\sum\limits_{1 \leq i_{1}\neq \dotsb \neq i_{\ell} \leq d}
(\otimes \prod_{i=1}^{d})(J^{\delta_{i, \Set{i_{1}, \dotsc, i_{\ell}}}}v_{i}).
\]
Here, for any set \(I \subset \N\), one sets
\[
\delta_{\cdot,I}\colon \N \to \Set{0,1}
\]
to be the function that is equal to \(1\) if \(i \in I\), and zero otherwise.

The problematic is the following: we are looking for tensors in \(E=(\C^{k+1})^{\otimes d}\) that are in the kernel of the endomorphisms \(J^{(\ell)}\) for any \(1\leq \ell \leq d\). 
This problem can be reformulated as follows. Consider the isomorphism of vector spaces that identifies
\[
J^{\mathbi{\alpha}}v
\longleftrightarrow
\frac{X_{1}^{k-\alpha_{1}}}{(k-\alpha_{1})!} 
\dotsb
\frac{X_{d}^{k-\alpha_{d}}}{(k-\alpha_{d})!} 
\in \C[X_{1}, \dotsc, X_{d}].
\]
The key observation is that, under this identification, the endomorphism \(J^{(\ell)}\) is nothing but the endomorphism induced by the partial differential equation:
\[
\sum\limits_{1\leq i_{1} \neq \dotsb \neq i_{\ell} \leq d} \frac{\partial^{\ell}}{\partial X_{i_{1}} \dotsb \partial X_{i_{\ell}}}.
\]

With this reformulation, it becomes relevant to study solutions in \(\O(\C^{d})\) of the following system of PDE's with constant coefficients:
\[
(\mathcal{S})\colon
\
\
\Big( 
\sum\limits_{1\leq i_{1} \neq \dotsb \neq i_{\ell} \leq d} \frac{\partial^{\ell}}{\partial X_{i_{1}} \dotsb \partial X_{i_{\ell}}}
\Big)_{1 \leq \ell \leq d}.
\]
Note that by classic considerations on symmetric polynomials, the above system is equivalent to the following system of PDE's:
\[
(\mathcal{S'})\colon
\
\
\Big( 
\sum\limits_{i=1}^{d} \frac{\partial^{\ell}}{(\partial X_{i})^{\ell}} 
\Big)_{1 \leq \ell \leq d}.
\]
Following Section \ref{subs: PDE}, we are lead to study the ideal in \(\C[X_{1}, \dotsc, X_{d}]\)
\[
I\bydef(S_{1}, \dotsc, S_{d}),
\]
where, for \(r\in \N_{\geq1}\), \(S_{r}\) is the \(r\)th Newton polynomial in \(d\) variables, namely:
\[
S_{r}
\bydef 
\sum\limits_{i=1}^{d} X_{i}^{r}.
\]
One has then the following lemma:
\begin{lemma}
\label{lemma: ideal}
The affine variety \(V(I)\) consists of the origin
\[
V(I)=\Set{0},
\]
and one has the equality:
\[
\length\Big(\frac{\C[X_{1}, \dotsc, X_{d}]}{I}\Big)=d!.
\]
\end{lemma}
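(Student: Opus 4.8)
The statement has two parts: first that $V(I) = \{0\}$, and second that $\length(\C[X_1,\dots,X_d]/I) = d!$. The first part is elementary: the ideal $I$ is generated by the power sums $S_1,\dots,S_d$ in $d$ variables, and by Newton's identities these generate the same ideal as the elementary symmetric polynomials $e_1,\dots,e_d$. A point $(x_1,\dots,x_d)$ lies in $V(I)$ if and only if all $e_j(x_1,\dots,x_d)$ vanish, which forces the monic polynomial $\prod_i (T - x_i) = T^d$, hence $x_1 = \dots = x_d = 0$. So $V(I) = \{0\}$, and in particular $I$ is zero-dimensional, so $\C[X_1,\dots,X_d]/I$ is Artinian and its length is finite.

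For the length computation I would exploit the fact that $\C[X_1,\dots,X_d]/(e_1,\dots,e_d)$ is the classical \emph{coinvariant algebra} of the symmetric group $\Sigma_d$. Since $I = (S_1,\dots,S_d) = (e_1,\dots,e_d)$, I want to show $\dim_\C \C[X_1,\dots,X_d]/(e_1,\dots,e_d) = d!$. There are several standard routes: (a) the sequence $e_1,\dots,e_d$ is a regular sequence in $\C[X_1,\dots,X_d]$ (the $X_i$ are integral over the subring $\C[e_1,\dots,e_d]$ of rank $d!$, and a system of parameters in a Cohen--Macaulay ring is regular), so the Hilbert series of the quotient is $\prod_{j=1}^{d}\frac{1-t^j}{1-t} = [d]_t!$, whose value at $t=1$ is $d!$; or (b) invoke the Chevalley--Shephard--Todd theorem, which says $\C[X_1,\dots,X_d]$ is free of rank $|\Sigma_d| = d!$ over its ring of invariants $\C[e_1,\dots,e_d]$, and then specialize the invariants to the augmentation ideal to get a quotient of dimension $d!$. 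I would present route (a) via Hilbert series since it is self-contained and gives the finer graded information $[d]_t!$ essentially for free.

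Concretely, the key steps in order are: (1) Newton's identities to see $(S_1,\dots,S_d) = (e_1,\dots,e_d)$ and deduce $V(I) = \{0\}$; (2) observe $\C[X_1,\dots,X_d]$ is a free module of rank $d!$ over $\C[e_1,\dots,e_d]$ (fundamental theorem of symmetric functions plus a dimension/degree count, e.g.\ the Schubert-polynomial or descent-monomial basis $\{X_1^{a_1}\cdots X_d^{a_d} : 0 \le a_i \le d-i\}$), which shows $e_1,\dots,e_d$ is a regular sequence; (3) the Koszul complex on $e_1,\dots,e_d$ is then a resolution, giving the Hilbert series of the quotient as $\prod_{j=1}^d (1 + t + \dots + t^{j-1})$; (4) evaluate at $t=1$ to get $d!$. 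Alternatively one bypasses Hilbert series entirely: the image of the monomials $\{X_1^{a_1}\cdots X_d^{a_d} : 0 \le a_i \le d-i\}$ spans $\C[X]/(e_1,\dots,e_d)$ (straightforward, using $e_j \equiv 0$ to rewrite high powers) and is linearly independent (again from freeness over the invariants), so the quotient has dimension exactly $\#\{(a_1,\dots,a_d): 0 \le a_i \le d-i\} = d!$.

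The main obstacle is establishing that $e_1,\dots,e_d$ form a regular sequence (equivalently, that $\C[X_1,\dots,X_d]$ is free of rank $d!$ over $\C[e_1,\dots,e_d]$) — everything else is bookkeeping. This is of course classical, but I should decide whether to cite it (Bourbaki, or Humphreys' book on reflection groups, or the Chevalley theorem) or to give the quick argument: $\C[X_1,\dots,X_d]$ is a domain finite over the polynomial subring $\C[e_1,\dots,e_d]$ with fraction-field extension of degree $d!$ (the $X_i$ are roots of $\prod(T-X_i)$, a degree-$d$ polynomial over $\C[e_1,\dots,e_d]$, and the Galois group is $\Sigma_d$), and a finite extension of a polynomial ring by a Cohen--Macaulay module is free iff the module is Cohen--Macaulay; here $\C[X_1,\dots,X_d]$ is itself Cohen--Macaulay (it is regular), so it is free, necessarily of rank $d!$. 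Given the level of the paper I would simply cite this well-known fact and move directly to the length count, noting that it also recovers, via Theorem \ref{thm: PDE}, that $\dim_\C \mathcal{S}(I) = d!$.
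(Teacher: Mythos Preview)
Your proof is correct, but the route you take for the length computation is genuinely different from the paper's.

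For the first part, both you and the paper argue the same way: the vanishing of all power sums (equivalently, via Newton's identities, of all elementary symmetric polynomials) forces \(\prod_i (Z - a_i) = Z^d\), hence \(a_1 = \dotsb = a_d = 0\).

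For the second part, the paper does \emph{not} invoke the coinvariant algebra or regular sequences at all. Instead, it homogenizes: introduce an extra variable \(T\), regard \(S_1, \dotsc, S_d\) as homogeneous forms of degrees \(1, 2, \dotsc, d\) in \(\C[X_1, \dotsc, X_d, T]\), and observe that the projective scheme \(\Proj\big(\C[X_1,\dotsc,X_d,T]/(S_1,\dotsc,S_d)\big)\) is supported at the single point \([0:\dotsb:0:1]\). B\'ezout's theorem (in the form of \cite{FultonIntersection}[Proposition 8.4]) then gives the local intersection multiplicity as the product of the degrees, \(1 \cdot 2 \dotsm d = d!\), and this equals \(\length(\C[X_1,\dotsc,X_d]/I)\).

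What each approach buys: your argument is more elementary in that it avoids intersection theory, and it yields the finer graded statement that the Hilbert series of the quotient is the \(t\)-factorial \([d]_t! = \prod_{j=1}^{d}(1+t+\dotsb+t^{j-1})\). On the other hand, it relies on the (classical but nontrivial) freeness of \(\C[X_1,\dotsc,X_d]\) over \(\C[e_1,\dotsc,e_d]\), which you rightly flag as the main input. The paper's B\'ezout argument is a one-liner once the intersection-theoretic machinery is in place, fits the algebro-geometric tone of the surrounding sections, and requires essentially no commutative algebra beyond the definition of length. Either proof is perfectly acceptable here.
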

\begin{proof}
The fact that the affine variety \(V(I)\) consists only of the origin is classic. If \((a_{1}, \dotsc, a_{d}) \in V(I)\), then one has the equality in \(\C[Z]\)
\[
Z^{d}=\prod\limits_{i=1}^{d}(Z-a_{i}),
\]
so that necessarily \(a_{1}=\dotsb=a_{d}=0\).

Let \(m_{0}=(X_{1}, \dotsc, X_{d})\) be the maximal ideal at the origin, and consider the Artinian ring \(M\bydef\frac{\C[X_{1}, \dotsc, X_{d}]}{I}\). One easily sees that
\[
\length(M)
=
\length(M_{m_{0}}),
\]
where \(M_{m_{0}}\) is the localized module at \(m_{0}\). Consider now the following trick. Add one variable \(T\) to the polynomial ring \(\C[X_{1}, \dotsc, X_{d}]\), and denote by
\[
\tilde{I}=(S_{1}, \dotsc, S_{d}) \subset \C[X_{1}, \dotsc, X_{d}, T]
\]
the homogeneous ideal in \(\C[X_{1}, \dotsc, X_{d}, T]\) induced by \(S_{1}, \dotsc, S_{d} \in \C[X_{1}, \dotsc, X_{d}]\). The homogeneous variety \(\Proj(\tilde{M})\), where \(\tilde{M}\bydef \frac{\C[X_{1}, \dotsc, X_{d}, T]}{\tilde{I}}\), consists then of the single point
\[
\infty \bydef [0:\dotsc:0:1] \in \P^{d}.
\]
Denoting \(m_{\infty}\) the ideal sheaf of the closed point \(\infty\), elementary intersection theory (see e.g. \cite{FultonIntersection}[Proposition 8.4, and discussion below]) allows to show that
\begin{eqnarray*}
\length(M)
=
\length(M_{m_{0}})
=
\length(\tilde{M}_{m_{\infty}})
&
=
&
\O_{\P^{d}}(1) \cdot \O_{\P^{d}}(2) \cdot \dotsb \cdot \O_{\P^{d}}(d)
\\
&
=
&
d!.
\end{eqnarray*}
This finishes the proof of the lemma.
\end{proof}
Now, Theorem \ref{thm: PDE} readily implies the following proposition:
\begin{proposition}
\label{prop: estimate1}
One has the following estimate:
\[
\dim\big(
\bigcap_{1 \leq \ell \leq d} \Ker J^{(\ell)} 
\big)
\leq 
d!.
\]
\end{proposition}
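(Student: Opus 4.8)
The plan is to transport the entire problem through the linear isomorphism
\[
J^{\mathbi{\alpha}}v \longleftrightarrow \frac{X_{1}^{k-\alpha_{1}}}{(k-\alpha_{1})!}\dotsb\frac{X_{d}^{k-\alpha_{d}}}{(k-\alpha_{d})!}
\]
introduced just above, which identifies \(E=(\C^{k+1})^{\otimes d}\) with the finite-dimensional subspace of \(\C[X_{1},\dotsc,X_{d}]\) consisting of polynomials of degree at most \(k\) in each variable, and which carries each \(J^{(\ell)}\) to the restriction to that subspace of the constant-coefficient differential operator \(\sum_{1\le i_{1}\neq\dotsb\neq i_{\ell}\le d}\partial_{i_{1}}\dotsb\partial_{i_{\ell}}\). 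Thus a tensor lies in \(\bigcap_{1\le\ell\le d}\Ker J^{(\ell)}\) precisely when the associated polynomial is annihilated by all \(d\) of these operators.

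Next I would identify the ideal of \(\C[X_{1},\dotsc,X_{d}]\cong A\) generated by these operators with the ideal \(I=(S_{1},\dotsc,S_{d})\) of Lemma \ref{lemma: ideal}. Indeed \(\sum_{i_{1}\neq\dotsb\neq i_{\ell}}X_{i_{1}}\dotsb X_{i_{\ell}}\) equals \(\ell!\) times the \(\ell\)-th elementary symmetric polynomial, \(\ell!\) is invertible, and over \(\C\) Newton's identities give \((e_{1},\dotsc,e_{d})=(S_{1},\dotsc,S_{d})\); this is exactly the equivalence of the systems \((\mathcal{S})\) and \((\mathcal{S}')\) already recorded. Consequently the polynomial attached to an element of \(\bigcap_{\ell}\Ker J^{(\ell)}\) is annihilated by every element of \(I\), so it lies in \(\mathcal{S}(I)\), and the induced linear map
\[
\bigcap_{1\le\ell\le d}\Ker J^{(\ell)}\longrightarrow\mathcal{S}(I)
\]
is injective, being the restriction of an isomorphism of vector spaces to a subspace. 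In particular \(\dim\big(\bigcap_{\ell}\Ker J^{(\ell)}\big)\le\dim_{\C}\mathcal{S}(I)\).

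I would then conclude by quoting the two results already at hand: Lemma \ref{lemma: ideal} asserts that \(I\) is zero-dimensional with \(\length(\C[X_{1},\dotsc,X_{d}]/I)=d!\), and Theorem \ref{thm: PDE} gives \(\dim_{\C}\mathcal{S}(I)=\length(\C[X_{1},\dotsc,X_{d}]/I)=d!\); combining with the previous inequality yields the claim. There is no genuine obstacle here, since the substantial inputs (the length computation via intersection theory and Oberst's theorem) are already available; the only points deserving a sentence of care are checking that the \(J^{(\ell)}\) really do correspond to the PDE operators under the chosen basis and that the passage \((\mathcal{S})\rightsquigarrow(\mathcal{S}')\rightsquigarrow I\) is valid, i.e. that power sums and elementary symmetric functions cut out the same ideal — which is where the characteristic-zero hypothesis enters.
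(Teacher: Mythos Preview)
Your proposal is correct and follows essentially the same approach as the paper: you transport the problem via the basis isomorphism to polynomial solutions of the system \((\mathcal{S})\), pass to the ideal \(I=(S_{1},\dotsc,S_{d})\) via the equivalence \((\mathcal{S})\leftrightarrow(\mathcal{S}')\), and then invoke Lemma~\ref{lemma: ideal} and Theorem~\ref{thm: PDE} to bound \(\dim\mathcal{S}(I)\) by \(d!\). The paper's proof is terser only because all of this setup is done in the paragraphs preceding the proposition, so its proof simply points back to that discussion.
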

\begin{proof}
On the one hand, one has seen that elements in \(\bigcap_{1\leq \ell \leq d} \Ker J^{(\ell)}\) embeds as polynomial solutions of the system of PDE's \((\mathcal{S})\). On the other hand, Theorem \ref{thm: PDE} combined with Lemma \ref{lemma: ideal} shows that the system \((\mathcal{S})\) admits exactly \(d!\) independent solutions in \(\O_{\C^{d}}\supset \C[X_{1}, \dotsc, X_{d}]\). This proves the proposition.
\end{proof}
\begin{remark}
Note that the upper bound depends only on \(d\), and not on \(k\).
\end{remark}

\subsubsection{Equivariance of the endomorphisms \(J^{(\ell)}\) with respect to the action of the symmetric group \(\Sigma_{d}\).}
\label{subs: equiv}
We keep the notations introduced in the previous Section \ref{subs: algebraic formulation}. Recall that there is a natural \textsl{right} action of symmetric group \(\Sigma_{d}\) on \(E=(\C^{k+1})^{\otimes d}\) obtained by permuting the factors:
\[
(v_{1} \otimes \dotsb \otimes v_{d})\cdot \sigma
\bydef
v_{\sigma(1)}\otimes \dotsb \otimes v_{\sigma(d)}.
\]
It makes \(E\) into a right \(\C[\Sigma_{d}]\)-module.
A key observation is that the endomorphisms \((J^{(\ell)})_{1 \leq \ell \leq d}\) commutes with the action of \(\Sigma_{d}\):
\begin{lemma}
\label{lemma: sigma linear}
For any \(\sigma \in \Sigma_{d}\), any \(1 \leq \ell \leq d\) and any \(v_{1} \otimes \dotsb \otimes v_{d}\), the following equality holds:
\[
\big(J^{(\ell)}(v_{1} \otimes \dotsb \otimes v_{d})\big) \cdot \sigma
=
J^{(\ell)}\big((v_{1} \otimes \dotsb \otimes v_{d})\cdot \sigma\big)
\]
\end{lemma}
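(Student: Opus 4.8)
The statement is a purely formal bookkeeping fact: both sides of the claimed equality are obtained by summing, over all ways of distinguishing $\ell$ of the $d$ tensor slots, the tensor in which $J$ has been applied to the chosen slots, and the right $\Sigma_d$-action merely relabels which slots are which. So the plan is to unwind both definitions on a pure tensor $w = v_1 \otimes \dotsb \otimes v_d$, track how the distinguished index set transforms under $\sigma$, and observe that summation over all $\ell$-subsets is invariant under this transformation. By linearity it then suffices to treat pure tensors, and it suffices to treat a single $\ell$.

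First I would rewrite $J^{(\ell)}(w)$ as a sum over $\ell$-element subsets $S \subseteq \{1, \dotsc, d\}$ (each subset contributing with multiplicity $\ell!$, accounting for the ordered tuples $i_1 \neq \dotsb \neq i_\ell$ in the definition), so that
\[
J^{(\ell)}(w) = \ell! \sum_{\substack{S \subseteq \{1,\dots,d\} \\ |S| = \ell}} \bigotimes_{i=1}^{d} \bigl(J^{\delta_{i,S}} v_i\bigr).
\]
Then I would apply $\cdot\,\sigma$ to each term using $(u_1 \otimes \dotsb \otimes u_d)\cdot\sigma = u_{\sigma(1)} \otimes \dotsb \otimes u_{\sigma(d)}$, obtaining $\bigotimes_{j=1}^d \bigl(J^{\delta_{\sigma(j),S}} v_{\sigma(j)}\bigr)$ for the $S$-term. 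The key elementary identity to record is $\delta_{\sigma(j),S} = \delta_{j,\sigma^{-1}(S)}$, which turns this into $\bigotimes_{j=1}^d \bigl(J^{\delta_{j,\sigma^{-1}(S)}} v_{\sigma(j)}\bigr)$.

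Next I would compute the right-hand side: writing $v'_j \bydef v_{\sigma(j)}$, so that $w\cdot\sigma = v'_1 \otimes \dotsb \otimes v'_d$, the same rewriting gives $J^{(\ell)}(w\cdot\sigma) = \ell! \sum_{|S'| = \ell} \bigotimes_{j=1}^d \bigl(J^{\delta_{j,S'}} v_{\sigma(j)}\bigr)$. Comparing with the previous paragraph, the two sums coincide once one notes that $S \mapsto \sigma^{-1}(S)$ is a bijection of the set of $\ell$-element subsets of $\{1,\dots,d\}$ onto itself, so reindexing by $S' = \sigma^{-1}(S)$ identifies the two expressions term by term. This completes the argument.

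There is no genuine obstacle here; the only point requiring a little care is bookkeeping — namely keeping the roles of $\sigma$ and $\sigma^{-1}$ straight (the factor-permutation acts on slots, hence pulls the distinguished set back along $\sigma^{-1}$) and checking that the overcounting factor $\ell!$ is the same on both sides so that it plays no role. One could alternatively phrase the whole proof representation-theoretically: $J^{(\ell)}$ lies in the image of the symmetric group algebra acting diagonally — more precisely it is the image under the multiplication $(\C^{k+1})^{\otimes d} \to (\C^{k+1})^{\otimes d}$ of a symmetric combination of $\operatorname{Id}^{\otimes(d-\ell)} \otimes J^{\otimes \ell}$ averaged over slot placements — and symmetric combinations commute with the permutation action; but the direct computation above is shortest and self-contained.
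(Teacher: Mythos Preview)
Your proof is correct and follows essentially the same approach as the paper: both compute each side directly, use the identity $\delta_{\sigma(j),S}=\delta_{j,\sigma^{-1}(S)}$, and then reindex the sum via the bijection $S\mapsto\sigma^{-1}(S)$. The only cosmetic difference is that you pass to unordered $\ell$-subsets (carrying the harmless factor $\ell!$) whereas the paper keeps the sum over ordered tuples $i_1\neq\dotsb\neq i_\ell$ throughout.
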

\begin{proof}
On the one hand, compute that:
\begin{eqnarray*}
\big(J^{(\ell)}(v_{1} \otimes \dotsb \otimes v_{d})\big) \cdot \sigma
&
=
&
\big(
\sum\limits_{1 \leq i_{1}\neq \dotsb \neq i_{\ell} \leq d}
(\otimes \prod_{i=1}^{d})(J^{\delta_{i, \Set{i_{1}, \dotsc, i_{\ell}}}}v_{i})
\big) \cdot \sigma
\\
&
=
&
\sum\limits_{1 \leq i_{1}\neq \dotsb \neq i_{\ell} \leq d}
(\otimes \prod_{i=1}^{d})(J^{\delta_{\sigma(i), \Set{i_{1}, \dotsc, i_{\ell}}}}v_{\sigma(i)}).
\end{eqnarray*}
On the other hand, compute that:
\begin{eqnarray*}
J^{(\ell)}\big((v_{1} \otimes \dotsb \otimes v_{d})\cdot \sigma\big)
&
=
&
\sum\limits_{1 \leq i_{1}\neq \dotsb \neq i_{\ell} \leq d} 
(\otimes \prod_{i=1}^{d})(J^{\delta_{i, \Set{i_{1}, \dotsc, i_{\ell}}}}v_{\sigma(i)})
\\
&
=
&
\sum\limits_{1 \leq i_{1}\neq \dotsb \neq i_{\ell} \leq d} 
(\otimes \prod_{i=1}^{d})(J^{\delta_{i, \Set{\sigma^{-1}(i_{1}), \dotsc, \sigma^{-1}(i_{\ell})}}}v_{\sigma(i)})
\\
&
=
&
\sum\limits_{1 \leq i_{1}\neq \dotsb \neq i_{\ell} \leq d} 
(\otimes \prod_{i=1}^{d})(J^{\delta_{\sigma(i), \Set{i_{1}, \dotsc, i_{\ell}}}}v_{\sigma(i)}).
\end{eqnarray*}
This proves the lemma.
\end{proof}

By the above Lemma \ref{lemma: sigma linear}, one deduces that for any \(1 \leq \ell \leq d\), and any standard Young tableau \(T\) with \(d\) boxes, the endomorphisms \(J^{(\ell)}\) commute with the almost-projections \(E(p_{T})\) (see Section \ref{subs: linear} for notations). In particular, these endomorphisms stabilize \(\Image(E(p_{T}))\). One has then the following elementary result:
\begin{lemma}
\label{lemma: indep}
The dimension of
\[
\bigcap_{1 \leq \ell \leq d} \Ker J^{(\ell)}_{\vert \Image(E(p_{T}))}
\]
is independent of the standard Young tableau of shape \(\mathbi{\lambda} \vdash d\).
\end{lemma}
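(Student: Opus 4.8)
The plan is to show that for any two standard Young tableaux $T$ and $T'$ of the same shape $\mathbi{\lambda} \vdash d$, there is a vector-space isomorphism $\Image(E(p_{T})) \to \Image(E(p_{T'}))$ that intertwines the family of endomorphisms $(J^{(\ell)})_{1 \le \ell \le d}$; this immediately gives the equality of the dimensions of the two intersections of kernels. The natural candidate for such an isomorphism is right-multiplication in $\C[\Sigma_{d}]$. Recall that by Theorem \ref{thm: struct sym} the Young symmetrizers $c_{T}$ and $c_{T'}$ generate isomorphic left ideals; concretely, since $c_{T'}$ lies in the two-sided ideal attached to $\mathbi{\lambda}$, there exists an element $g \in \C[\Sigma_{d}]$ (one may take $g = c_{T'} \sigma c_{T}$ for a suitable $\sigma \in \Sigma_{d}$, using that $a_{T}\,\sigma\,b_{T'} \ne 0$ for the permutation carrying $T$ to $T'$) with the property that right multiplication $r_{g}\colon v \mapsto v\,g$ maps $\Image(p_{T})$ isomorphically onto $\Image(p_{T'})$ inside $\C[\Sigma_{d}]$. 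Applying the functor $E \otimes_{\C[\Sigma_{d}]} (-)$, equivalently post-composing $E(p_{T})$ with the map $w \mapsto w \cdot g$ on $E$ (using the right $\C[\Sigma_{d}]$-module structure on $E = (\C^{k+1})^{\otimes d}$), one obtains a linear isomorphism
\[
\Phi\colon \Image(E(p_{T})) \longrightarrow \Image(E(p_{T'})), \qquad \Phi(w) = w \cdot g .
\]

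The key point is then that $\Phi$ commutes with each $J^{(\ell)}$. This is exactly what Lemma \ref{lemma: sigma linear} provides: the endomorphisms $J^{(\ell)}$ are $\Sigma_{d}$-equivariant for the right action, hence commute with right multiplication by \emph{any} element of $\C[\Sigma_{d}]$, in particular by $g$. Therefore, for $w \in \Image(E(p_{T}))$,
\[
J^{(\ell)}(\Phi(w)) = J^{(\ell)}(w \cdot g) = \big(J^{(\ell)}(w)\big)\cdot g = \Phi\big(J^{(\ell)}(w)\big),
\]
where in the middle step one expands $w$ as a combination of pure tensors and uses Lemma \ref{lemma: sigma linear} term by term (extended by linearity). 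It follows that $\Phi$ restricts to an isomorphism
\[
\bigcap_{1 \le \ell \le d}\Ker\big(J^{(\ell)}_{\vert \Image(E(p_{T}))}\big) \;\xrightarrow{\ \sim\ }\; \bigcap_{1 \le \ell \le d}\Ker\big(J^{(\ell)}_{\vert \Image(E(p_{T'}))}\big),
\]
which proves that the dimension in question depends only on $\mathbi{\lambda}$, not on the chosen standard tableau.

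I expect the main obstacle to be purely bookkeeping: producing the intertwiner $g$ explicitly and checking that $r_{g}$ genuinely restricts to an isomorphism $\Image(p_{T}) \to \Image(p_{T'})$ rather than merely a nonzero map. This is handled by the standard facts about Young symmetrizers collected in Theorem \ref{thm: struct sym} — the left ideal $\C[\Sigma_{d}] c_{T}$ is irreducible, so any nonzero $\C[\Sigma_{d}]$-linear map out of it is automatically injective, and its image is the irreducible submodule $\C[\Sigma_{d}]c_{T'}$ once one checks $c_{T}\,g' \ne 0$ for the relevant $g'$; the equivariance with $J^{(\ell)}$ is then automatic and costs nothing beyond Lemma \ref{lemma: sigma linear}. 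An alternative, slightly softer route avoids naming $g$ altogether: the assignment $T \rightsquigarrow \bigcap_{\ell}\Ker J^{(\ell)}_{\vert \Image(E(p_T))}$ is visibly stable under the $\Sigma_{d}$-action permuting standard tableaux of shape $\mathbi{\lambda}$ (since conjugating $c_{T}$ by $\sigma$ yields $c_{\sigma T}$ and $J^{(\ell)}$ is $\Sigma_{d}$-equivariant), and transitivity of this action on standard tableaux of a fixed shape gives the claim directly.
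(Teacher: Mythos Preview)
Your proposal is correct, and the ``alternative, slightly softer route'' you sketch at the end is exactly the paper's proof: take the unique $\sigma\in\Sigma_{d}$ with $\sigma\cdot T=T'$, use the conjugation identity $c_{T'}=\sigma\, c_{T}\,\sigma^{-1}$, and let right multiplication by $\sigma^{-1}$ (your $\Phi$) carry $\Image(E(p_{T}))$ isomorphically onto $\Image(E(p_{T'}))$; Lemma~\ref{lemma: sigma linear} then gives the commutation with each $J^{(\ell)}$.

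One small correction on your main route: the parenthetical choice $g=c_{T'}\sigma c_{T}$ does not do what you want, since $c_{T}\,g$ then ends in $c_{T}$ and hence lands back in $\C[\Sigma_{d}]c_{T}$ rather than $\C[\Sigma_{d}]c_{T'}$. The correct (and simplest) intertwiner is just $g=\sigma^{-1}$, for which $c_{T}\sigma^{-1}=\sigma^{-1}c_{T'}$ generates $\C[\Sigma_{d}]c_{T'}$; this is precisely the element the paper uses, and it makes the ``bookkeeping obstacle'' you anticipate disappear entirely.
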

\begin{proof}
Let \(T\) and \(T'\) be two standard tableaux of shape \(\mathbi{\lambda} \vdash d\). There exists a unique permutation \(\sigma \in \Sigma_{d}\) such that
\[
\sigma \cdot T
=
T'.
\]
Denote by \(E(\sigma)\) the isomorphism
\[
E(\sigma)\colon
\left(
\begin{array}{ccc}
E(\C[\Sigma_{d}]) & \longrightarrow  & E(\C[\Sigma_{d}])
 \\
 v & \longmapsto  & v\otimes_{\C[\Sigma_{d}]} \sigma \end{array}
\right).
\]
Observe that the following equality holds in \(\C[\Sigma_{d}]\):
\[
c_{T'}
=
\sigma \times c_{T} \times \sigma^{-1}.
\]
This implies in turn the following equality
\[
E(p_{T'})=E(\sigma^{-1}) \circ E(p_{T}) \circ E(\sigma).
\] 
In particular, if \(v \in \Image E(p_{T})\cap \Ker J^{(\ell)}\) for some \(1 \leq \ell \leq d\),  then
\[
E(\sigma^{-1})(v) \in \Image E(p_{T'}) \cap \Ker J^{(\ell)}.
\]
This allows to show that \(E(\sigma^{-1})\) realizes a bijection between 
\(\bigcap_{1 \leq \ell \leq d} \Ker J^{(\ell)}_{\vert \Image(E(p_{T}))}\)
and
\(\bigcap_{1 \leq \ell \leq d} \Ker J^{(\ell)}_{\vert \Image(E(p_{T'}))}\). This finishes the proof of the lemma.
\end{proof}

For a partition \(\mathbi{\lambda} \vdash d\), denote by \(T_{\can(\mathbi{\lambda})}\) the \textsl{canonical} standard Young tableau of shape \(\mathbi{\lambda}\), which is defined as follows:
\begin{itemize}
\item{} the first row of the diagram is filled with \(1, 2, \dotsc, \lambda_{1}\);
\item{} the second row of the diagram is filled with \(\lambda_{1}+1, \dotsc, \lambda_{1}+\lambda_{2}\);
\item{} \(\dotsc\)
\end{itemize}
From Proposition \ref{prop: estimate1} and Lemma \ref{lemma: indep}, one deduces the following:
\begin{proposition}
\label{prop: estimate2}
The following inequality holds:
\[
\sum\limits_{\mathbi{\lambda} \vdash d}
f_{\mathbi{\lambda}}
\times
\dim\big(\bigcap_{1 \leq \ell \leq d} \Ker J^{(\ell)}_{\vert \Image(E(p_{T_{\can(\mathbi{\lambda})}}))}\big)
\leq
d!.
\]
\end{proposition}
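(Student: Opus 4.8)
The plan is to combine the global dimension bound of Proposition~\ref{prop: estimate1} with the $\Sigma_d$-equivariance of the family $(J^{(\ell)})_{1 \leq \ell \leq d}$ established in Lemma~\ref{lemma: sigma linear}, via the decomposition of $E = (\C^{k+1})^{\otimes d}$ into irreducible $\GL_{k+1}(\C)$-representations given by Theorem~\ref{thm: struct gen}. The starting point is the direct sum decomposition
\[
E
=
\bigoplus_{T} \Image(E(p_{T})),
\]
where $T$ runs over all standard Young tableaux with $d$ boxes. Since each $J^{(\ell)}$ commutes with every $E(p_T)$ (this is the consequence of Lemma~\ref{lemma: sigma linear} recorded just before Lemma~\ref{lemma: indep}), each summand $\Image(E(p_T))$ is stable under all the $J^{(\ell)}$, and therefore
\[
\bigcap_{1 \leq \ell \leq d} \Ker J^{(\ell)}
=
\bigoplus_{T}
\Big(\bigcap_{1 \leq \ell \leq d} \Ker J^{(\ell)}_{\vert \Image(E(p_{T}))}\Big).
\]
Taking dimensions gives
\[
\dim\Big(\bigcap_{1 \leq \ell \leq d} \Ker J^{(\ell)}\Big)
=
\sum_{T}
\dim\Big(\bigcap_{1 \leq \ell \leq d} \Ker J^{(\ell)}_{\vert \Image(E(p_{T}))}\Big).
\]

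Next I would group the standard tableaux $T$ by their common shape $\mathbi{\lambda} \vdash d$. For a fixed shape $\mathbi{\lambda}$ there are exactly $f_{\mathbi{\lambda}}$ standard tableaux, and by Lemma~\ref{lemma: indep} the dimension $\dim\big(\bigcap_{1 \leq \ell \leq d} \Ker J^{(\ell)}_{\vert \Image(E(p_{T}))}\big)$ depends only on $\mathbi{\lambda}$, not on the particular standard tableau $T$ of that shape; in particular it equals the value for the canonical tableau $T_{\can(\mathbi{\lambda})}$. Hence the sum over all $T$ reorganizes as
\[
\dim\Big(\bigcap_{1 \leq \ell \leq d} \Ker J^{(\ell)}\Big)
=
\sum_{\mathbi{\lambda} \vdash d}
f_{\mathbi{\lambda}}
\times
\dim\Big(\bigcap_{1 \leq \ell \leq d} \Ker J^{(\ell)}_{\vert \Image(E(p_{T_{\can(\mathbi{\lambda})}}))}\Big).
\]
Finally, Proposition~\ref{prop: estimate1} bounds the left-hand side by $d!$, which yields exactly the claimed inequality.

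I do not expect any serious obstacle here: all the conceptual work has already been done in the preceding subsections, and the argument is essentially bookkeeping — the decomposition into $E(p_T)$-images, stability of each summand under the commuting operators, and the passage from a sum over standard tableaux to a weighted sum over partitions using $f_{\mathbi{\lambda}}$ and Lemma~\ref{lemma: indep}. The one point deserving a word of care is that the $J^{(\ell)}$ genuinely commute with the $E(p_T)$ \emph{as maps on $E$} and not merely preserve each image abstractly: this follows because $E(p_T)$ is right multiplication by $c_T \in \C[\Sigma_d]$ on the $\C[\Sigma_d]$-module $E$, while $J^{(\ell)}$ is a morphism of right $\C[\Sigma_d]$-modules by Lemma~\ref{lemma: sigma linear}; so the intersection of kernels splits along the decomposition rather than just injecting into it, which is what makes the dimension count an equality and hence the inequality sharp in the right direction.
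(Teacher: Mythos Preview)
Your proof is correct and follows exactly the same approach as the paper: decompose $E$ via Theorem~\ref{thm: struct gen}, use the $\Sigma_d$-equivariance of the $J^{(\ell)}$ to split the common kernel along this decomposition, group by shape using Lemma~\ref{lemma: indep}, and apply the global bound of Proposition~\ref{prop: estimate1}. The paper's own proof is a two-line summary of precisely this argument; your version simply makes the implicit steps explicit, including the helpful observation that genuine commutation (not merely stability) is what yields an equality of dimensions in the decomposition of the kernel.
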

\begin{proof}
By Theorem \ref{thm: struct gen}, one has the direct sum decomposition
\[
E
=
\bigoplus_{T} \Image(E(p_{T})),
\]
where \(T\) runs over standard tableaux with \(d\) boxes. The statement now follows immediately from Proposition \ref{prop: estimate1} and Lemma \ref{lemma: indep}.
\end{proof}

\subsection{Where one relates the previous problem to the question at hand.}
\label{subs: relate}
Let us now relate the study carried over in the previous Section \ref{subs: pure algebraic} to the Schmidt--Kolchin conjecture. We fix \(\mathbi{\lambda} \vdash d\) a partition of \(d\) \textsl{with at most \((N+1)\) parts}. Following Section \ref{subs: hw vectors} and the very beginning Section \ref{subs: pure algebraic}, we wish to understand polynomials \(P\) in \(\mathcal{D}_{\mathbi{\lambda}}^{(k)}\) (see Lemma \ref{prop: hw vectors} for notations) which satisfy the following identity
\begin{equation}
\label{eq: functional eq}
P(\alpha X, \alpha X^{(1)} + X^{(0)}, \alpha X^{(2)}+2X^{(1)}, \dotsc)
=
\alpha^{d} P(X, X^{(1)}, X^{(2)}, \dotsc)
\end{equation}
for any \(\alpha \in \C\). 
This can be restated as follows.
Denote by
\[
F\bydef \C \cdot X \oplus \C \cdot X^{(1)} \oplus \dotsb \oplus \C \cdot X^{(k)}
\simeq \C^{k+1}.
\]
Observe that the linear group \(\GL(F)\simeq \GL_{k+1}(\C)\) acts naturally on \(\mathcal{D}_{\mathbi{\lambda}}^{(k)}\) by change of variable\footnote{Note that this action has nothing to do with the usual linear action on \(V\).}, i.e. for \(P \in \mathcal{D}_{\mathbi{\lambda}}^{(k)}\) and \(A \in \GL(F)\):
\[
A \cdot P
\bydef 
P(AX, AX^{(1)}, \dotsc, AX^{(k)}).
\]
The equality \eqref{eq: functional eq} is then equivalent to the following equality (see Section \ref{subs: pure algebraic} for the definition of the matrix \(J\)):
\begin{equation}
\label{eq: functional eqbis}
(\alpha \Id + J)\cdot P
=
\alpha^{d} P.
\end{equation}
Note that for an arbitrary \(P \in \mathcal{D}_{\mathbi{\lambda}}^{(k)}\), the polynomial 
\[
(\alpha \Id + J)\cdot P
\]
is a polynomial of degree \(d\) in \(\alpha\), with leading term \(P(X, X^{(1)}, X^{(2)}, \dotsc)\). Therefore, in order to obtain \eqref{eq: functional eq}, we must require the vanishing of all the coefficients in front of the terms \(\alpha^{i}\), \(i<d\). 

For any Young tableau \(T\) with \(d\) boxes, denote
\[
e_{T}
\bydef
(\otimes \prod_{(i,j) \in T})X^{(T(i,j))} \in E \bydef F^{\otimes d},
\]
where the tensor product is taken over the elements in the tableau \(T\), read in the usual fashion, i.e. from left to right and top to bottom.
Let \(T_{\can(\mathbi{\lambda})}\) be the canonical standard tableau with shape \(\mathbi{\lambda}\). For sake of notations, let us denote \(c_{\mathbi{\lambda}}\bydef c_{T_{\can(\mathbi{\lambda})}}\).
We have the following crucial proposition:
\begin{proposition}
\label{prop: isomorphism}
The linear map
\[
e\colon
\left(
\begin{array}{ccc}
 \mathcal{D}_{\mathbi{\lambda}}^{(k)} & \longrightarrow  &  \Image(E(p_{T_{\can(\mathbi{\lambda})}})) \subset E=F^{\otimes d} \\
  D_{T} & \longmapsto  & e_{T}\otimes_{\C[\Sigma_{d}]} c_{\mathbi{\lambda}}
\end{array}
\right)
\]
is well-defined, and is an isomorphism\footnote{The assumption that the partition has at most \((N+1)\) parts is important: this map would be the zero-map otherwise.}.
\end{proposition}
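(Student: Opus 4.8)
The plan is to realise both sides as quotients of a single \(\GL(F)\)-representation through the ``column-antisymmetrisation'' operator, and then to identify the two kernels. Write \(p(a,c)\bydef\lambda_{1}+\dotsb+\lambda_{a-1}+c\) for the rank of the box \((a,c)\) of the shape \(\mathbi{\lambda}\) in the left-to-right, top-to-bottom reading order, so that \(p(a,c)\) is precisely the entry of \(T_{\can(\mathbi{\lambda})}\) at that box. Introduce the auxiliary \(\C\)-linear map
\[
\Psi\colon E=F^{\otimes d}\longrightarrow V_{d}^{(k)},\qquad
X^{(j_{1})}\otimes\dotsb\otimes X^{(j_{d})}\longmapsto\prod_{c=1}^{\lambda_{1}}D_{(j_{p(1,c)},\dotsc,j_{p(\lambda'_{c},c)})},
\]
where \(\lambda'\) denotes the conjugate partition. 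By construction \(\Psi(e_{T})=D_{T}\) for every filling \(T\) of \(\mathbi{\lambda}\) with \(\Set{0,\dotsc,k}\); in particular \(\Psi\) is surjective onto \(\mathcal{D}_{\mathbi{\lambda}}^{(k)}\). Moreover \(\Psi\) is \(\GL(F)\)-equivariant: on the source \(\GL(F)\) acts diagonally on \(F^{\otimes d}\), on the target it acts by the substitution \(X^{(j)}\mapsto\sum_{l}A_{lj}X^{(l)}\), and equivariance is exactly the Cauchy--Binet expansion of each column determinant. Finally, \(E(p_{T_{\can(\mathbi{\lambda})}})\) is \(\GL(F)\)-equivariant by the compatibility of the \(\GL(F)\)- and \(\Sigma_{d}\)-actions on \(E\) recalled in Section \ref{subs: linear}.

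Since \(\Psi(e_{T})=D_{T}\) and \(E(p_{T_{\can(\mathbi{\lambda})}})(e_{T})=e_{T}\otimes_{\C[\Sigma_{d}]}c_{\mathbi{\lambda}}\), the map \(e\) of the statement is precisely the map induced on the quotient \(\mathcal{D}_{\mathbi{\lambda}}^{(k)}=E/\Ker\Psi\) by \(E(p_{T_{\can(\mathbi{\lambda})}})\) (viewed as a map \(E\to\Image(E(p_{T_{\can(\mathbi{\lambda})}}))\)). Hence \(e\) is well-defined as soon as \(\Ker\Psi\subseteq\Ker E(p_{T_{\can(\mathbi{\lambda})}})\), and, once well-defined, it is automatically an isomorphism: both \(\mathcal{D}_{\mathbi{\lambda}}^{(k)}\) and \(\Image(E(p_{T_{\can(\mathbi{\lambda})}}))\) have dimension \(d_{\mathbi{\lambda}}(k+1)\) by Proposition \ref{prop: hw vectors}, Theorem \ref{thm: struct gen} and Proposition \ref{prop: dim Schur}, so a surjection between them is bijective. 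Everything thus reduces to the equality \(\Ker\Psi=\Ker E(p_{T_{\can(\mathbi{\lambda})}})\).

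To prove it, note that permuting the tensor slots within a single column of \(T_{\can(\mathbi{\lambda})}\) permutes the rows of the corresponding column determinant, so \(\Psi(v\cdot q)=\epsilon(q)\Psi(v)\) for every \(q\in C(T_{\can(\mathbi{\lambda})})\); hence \(\Psi\) annihilates \(\Ker(\cdot\, b_{\mathbi{\lambda}})\) (with \(b_{\mathbi{\lambda}}\bydef b_{T_{\can(\mathbi{\lambda})}}\)) and factors as \(\Psi=\overline{\Psi}\circ\pi_{Q}\), where \(\pi_{Q}\colon E\twoheadrightarrow Q\bydef\Image(\cdot\, b_{\mathbi{\lambda}})\) is the \(\GL(F)\)-equivariant projection onto the sign-isotypic part of \(E\) for \(C(T_{\can(\mathbi{\lambda})})\) — which as a \(\GL(F)\)-module is \(\bigwedge^{\lambda'_{1}}F\otimes\dotsb\otimes\bigwedge^{\lambda'_{\lambda_{1}}}F\) — and \(\overline{\Psi}\colon Q\twoheadrightarrow\mathcal{D}_{\mathbi{\lambda}}^{(k)}\) is \(\GL(F)\)-equivariant. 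Likewise \(E(p_{T_{\can(\mathbi{\lambda})}})=(\cdot\, a_{\mathbi{\lambda}})\circ(\cdot\, b_{\mathbi{\lambda}})\) factors through \(\pi_{Q}\), via the \(\GL(F)\)-equivariant surjection \(\cdot\, a_{\mathbi{\lambda}}\colon Q\twoheadrightarrow\Image(E(p_{T_{\can(\mathbi{\lambda})}}))\). Therefore \(\Ker\Psi=\Ker E(p_{T_{\can(\mathbi{\lambda})}})\) if and only if \(\Ker\overline{\Psi}=\Ker(\cdot\, a_{\mathbi{\lambda}})\) inside \(Q\). Now these are the kernels of two \(\GL(F)\)-equivariant surjections from \(Q\) onto modules isomorphic to the irreducible \(S^{\mathbi{\lambda}}\C^{k+1}\); and one sees (by Pieri's formula, as in Section \ref{subs: hw vectors}: the relevant multiplicity is the Kostka number \(K_{\lambda',\lambda'}\), equal to \(1\) because the highest weight of a Schur power occurs with multiplicity one) that \(S^{\mathbi{\lambda}}\C^{k+1}\) occurs in \(Q\) with multiplicity exactly one. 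By complete reducibility, any \(\GL(F)\)-equivariant surjection from a semisimple module onto an irreducible of multiplicity one has for kernel the sum of the isotypic components of the \emph{other} types; hence both \(\Ker\overline{\Psi}\) and \(\Ker(\cdot\, a_{\mathbi{\lambda}})\) equal the sum of the isotypic components of \(Q\) of type \(\neq S^{\mathbi{\lambda}}\C^{k+1}\), so they coincide, which completes the proof.

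The only non-formal ingredient is the multiplicity-one statement for \(S^{\mathbi{\lambda}}\C^{k+1}\) inside \(Q\); the construction of \(\Psi\), its two factorisations through \(\pi_{Q}\), and the identification of \(e\) with an induced quotient map are all routine bookkeeping. This is also where the hypothesis that \(\mathbi{\lambda}\) has at most \((N+1)\) parts is used: it is precisely what makes each column determinant \(D_{T(\cdot,c)}\), hence \(D_{T}\), hence \(\Psi\) itself, nonzero — a column of length larger than \(N+1\) has no associated minor of the \((N+1)\)-tuple \(X\) — so that for a partition with more parts one has \(\mathcal{D}_{\mathbi{\lambda}}^{(k)}=(0)\) while \(\Image(E(p_{T_{\can(\mathbi{\lambda})}}))=S^{\mathbi{\lambda}}\C^{k+1}\neq(0)\), forcing the analogous map to vanish, as the footnote to the statement records.
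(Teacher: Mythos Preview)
Your proof is correct and takes a genuinely different route from the paper's. The paper (Appendix \ref{appendix: B}) proceeds combinatorially: it invokes the straightening/Garnir relations satisfied by the \(D_{T}\)'s (from \cite{Fulton}[II.8.1]), then shows that the Young symmetrizer \(c_{\mathbi{\lambda}}\) satisfies the \emph{same} relations inside \(\C[\Sigma_{d}]\) by passing through the Specht module \(S^{\mathbi{\lambda}}\subset M^{\mathbi{\lambda}}\) and quoting \cite{Fulton}[II.7.4]. Your argument is more structural: you recognise that both \(\Psi\) and \(E(p_{T_{\can(\mathbi{\lambda})}})\) factor through the column-antisymmetrisation \(Q\simeq\bigwedge^{\lambda'_{1}}F\otimes\dotsb\otimes\bigwedge^{\lambda'_{\lambda_{1}}}F\), and then the single representation-theoretic fact \(K_{\mathbi{\lambda}',\mathbi{\lambda}'}=1\) (multiplicity one of \(S^{\mathbi{\lambda}}\C^{k+1}\) in \(Q\)) forces the two kernels to coincide by Schur's lemma. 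What this buys you is that you never touch the Garnir relations explicitly; conversely, the paper's approach is more self-contained in that it does not require knowing the decomposition of \(Q\). Two small remarks: the equivariance of \(\Psi\) is just multilinearity of the determinant in its rows rather than Cauchy--Binet proper; and your reference to ``Pieri as in Section \ref{subs: hw vectors}'' is slightly imprecise, since that section uses the Pieri rule for \emph{symmetric} powers, whereas here you need the dual version \(e_{\mathbi{\mu}}=\sum_{\mathbi{\nu}}K_{\mathbi{\nu}',\mathbi{\mu}}\,s_{\mathbi{\nu}}\) for elementary symmetric functions. Neither point affects the validity of the argument.
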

\begin{proof}
It essentially follows from \cite{Fulton}[II.8.1 Lemma 3 \& Theorem 1] and \cite{Fulton}[II.7.4 Proposition 4]. Details are provided in Appendix \ref{appendix: B}.
\end{proof}
\begin{remark}
This proposition is essential, because it allows to almost completely get rid of the dependency on \(N\): the sole dependency on \(N\) for the space on the right lies on the constraint on the partition \(\mathbi{\lambda}\) (it must not have more than \((N+1)\) parts).
\end{remark}
Consider the following natural surjective linear map
\[
\pi\colon
\left(
\begin{array}{ccc}
 E & \longrightarrow  & \mathcal{D}_{\mathbi{\lambda}}^{(k)}   
 \\
  e_{T} & \longmapsto &  D_{T}
     \end{array}
\right),
\]
and observe that one has the following commutative diagram:
\begin{equation}
\label{eq: diagram}
\xymatrix{
E
\ar[rr]^-{E(p_{T_{\can(\mathbi{\lambda})}})}
\ar[rd]^-{\pi}
&
&
\Image(E(p_{T_{\can(\mathbi{\lambda})}}))
\\
&
 \mathcal{D}_{\mathbi{\lambda}}^{(k)}
 \ar[ru]^-{e}
 }.
\end{equation}
A simple but important observation is that the action of \(\GL(F)\) on \(\mathcal{D}_{\mathbi{\lambda}}^{(k)}\) commutes with the projection \(\pi\):
\begin{lemma}
\label{lemma: final inter}
For any \(v \in E\) and any \(A \in \GL(F)\), the following equality holds:
\[
\pi(A\cdot v)
=
A \cdot \pi(v).
\]
\end{lemma}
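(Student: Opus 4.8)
The plan is to verify the identity on the standard basis of $E = F^{\otimes d}$ given by the pure tensors $e_{T}$, where $T$ runs over the $(k+1)^{d}$ fillings of the Young diagram of shape $\mathbi{\lambda}$ with the labels $\Set{0, \dotsc, k}$. These are exactly the $(k+1)^{d}$ elements of the obvious basis of $F^{\otimes d}$ (here $F$ has basis $X, X^{(1)}, \dotsc, X^{(k)}$), so $\pi$ is well defined and it suffices to check $\pi(A \cdot e_{T}) = A \cdot D_{T}$ for each such $T$ and each $A \in \GL(F)$.

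First I would expand the left-hand side. Writing $A = (A_{ij})_{0 \leq i,j \leq k}$ in the basis $X, X^{(1)}, \dotsc, X^{(k)}$ of $F$, the action of $\GL(F)$ on $E$ is the diagonal one on tensor factors, so $A \cdot e_{T} = \bigotimes_{b} A X^{(T(b))}$, the tensor product running over the boxes $b$ of the diagram in reading order. Expanding each factor as $A X^{(T(b))} = \sum_{j} A_{j, T(b)} X^{(j)}$ and distributing, one gets $A \cdot e_{T} = \sum_{T'} \big( \prod_{b} A_{T'(b), T(b)} \big) e_{T'}$, the sum over all fillings $T'$ of the same diagram with $\Set{0, \dotsc, k}$. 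Applying $\pi$ yields $\pi(A \cdot e_{T}) = \sum_{T'} \big( \prod_{b} A_{T'(b), T(b)} \big) D_{T'}$.

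Next I would expand the right-hand side from the defining factorisation $D_{T} = D_{T(\cdot,1)} \times \dotsb \times D_{T(\cdot, \lambda_{0})}$ over the columns of $T$. The action of $A$ on $\mathcal{D}_{\mathbi{\lambda}}^{(k)}$ is by the substitution $X^{(i)} \mapsto A X^{(i)}$, hence is a ring homomorphism and distributes over this product; for a single column $\mathbi{i} = (i_{0}, \dotsc, i_{r})$, substituting into $D_{\mathbi{i}} = \det(X_{c}^{(i_{s})})_{0 \leq s,c \leq r}$ replaces the $s$-th row $(X_{0}^{(i_{s})}, \dotsc, X_{r}^{(i_{s})})$ by $\sum_{j} A_{j, i_{s}} (X_{0}^{(j)}, \dotsc, X_{r}^{(j)})$, and multilinearity together with the alternating property of the determinant in its rows give $A \cdot D_{\mathbi{i}} = \sum_{\mathbi{j}} \big( \prod_{s} A_{j_{s}, i_{s}} \big) D_{\mathbi{j}}$ (with $\mathbi{j}$ ranging over tuples of the same length with entries in $\Set{0, \dotsc, k}$, and $D_{\mathbi{j}}$ vanishing when $\mathbi{j}$ has a repetition). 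Taking the product of these expansions over the columns of $T$, the combined datum of one tuple $\mathbi{j}^{(c)}$ per column $c$ is precisely a filling $T'$ of the diagram: the product $\prod_{c} D_{\mathbi{j}^{(c)}}$ equals $D_{T'}$ by definition of $D_{T'}$, and the coefficient collapses to $\prod_{b} A_{T'(b), T(b)}$. Hence $A \cdot D_{T} = \sum_{T'} \big( \prod_{b} A_{T'(b), T(b)} \big) D_{T'}$, which is term-by-term the same linear combination as $\pi(A \cdot e_{T})$, so the lemma follows.

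The only delicate point is purely notational: one must keep apart the three indexings in play — the derivation order indexing the chosen basis of $F$, the position of a factor inside $F^{\otimes d}$, and the box of the Young diagram — and, in particular, observe that the relevant $\GL(F)$-action on $E$ is the diagonal tensor-power action, as is forced by requiring the map $E(p_{T_{\can(\mathbi{\lambda})}})$ in diagram \eqref{eq: diagram} to be $\GL(F)$-equivariant. Once the dictionary ``one label-tuple per column $=$ one filling $T'$'' is set up, the two sides are literally identical, and there is no obstacle beyond this bookkeeping.
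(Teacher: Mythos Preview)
Your proof is correct. The paper's own proof consists of a single sentence citing \cite{Fulton}[II.8.1 Exercises 3 \& 4], so you have essentially carried out in full the direct verification that those exercises point to: expand both $\pi(A\cdot e_{T})$ and $A\cdot D_{T}$ over all fillings $T'$ and observe that the coefficients match. The two approaches are the same in spirit; yours is simply self-contained rather than delegated to a reference, and your closing remark about which $\GL(F)$-action on $E$ is meant (the diagonal tensor action, forced by compatibility with the diagram \eqref{eq: diagram}) is a useful clarification that the paper leaves implicit.
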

\begin{proof}
This follows from \cite{Fulton}[II.8.1 Exercices 3 \& 4].
\end{proof}
As an immediate corollary, the action of \(\GL(F)\) commutes with the isomorphism \(e\):
\begin{lemma}
\label{lemma: final1}
For any \(P \in \mathcal{D}_{\mathbi{\lambda}}^{(k)} \) and any \(A \in \GL(F)\), the following equality holds:
\[
e(A\cdot P)
=
A \cdot e(P).
\]
\end{lemma}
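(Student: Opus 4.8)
The plan is to read off the claim from Lemma~\ref{lemma: final inter}, the commutative triangle \eqref{eq: diagram}, and the elementary compatibility between the $\GL(F)$-action and the $\C[\Sigma_{d}]$-action on $E=F^{\otimes d}$ that was recorded in Section~\ref{subs: linear}.

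First I would unwind the definition of $e$ via the surjection $\pi$. Let $P \in \mathcal{D}_{\mathbi{\lambda}}^{(k)}$ and pick $v \in E$ with $\pi(v)=P$; this is possible since $\pi$ is surjective. The commutativity of \eqref{eq: diagram} gives $e(P)=E(p_{T_{\can(\mathbi{\lambda})}})(v)$. Now let $A \in \GL(F)$. By Lemma~\ref{lemma: final inter}, $A\cdot P = A\cdot\pi(v)=\pi(A\cdot v)$, so $A\cdot v$ is a lift of $A\cdot P$ along $\pi$; applying the commutativity of \eqref{eq: diagram} once more yields $e(A\cdot P)=E(p_{T_{\can(\mathbi{\lambda})}})(A\cdot v)$. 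Hence it suffices to prove that the almost-projection $E(p_{T_{\can(\mathbi{\lambda})}})$ is $\GL(F)$-equivariant.

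The latter is immediate from the construction of $E(p_T)$. Under the tautological identification $E\otimes_{\C[\Sigma_{d}]}\C[\Sigma_{d}]\simeq E$, the map $E(p_T)$ is simply right multiplication by the fixed element $c_{\mathbi{\lambda}}\in\C[\Sigma_{d}]$ on $E=F^{\otimes d}$, where $F^{\otimes d}$ carries its right $\C[\Sigma_{d}]$-module structure by permutation of factors. Since the left $\GL(F)$-action (acting diagonally on the tensor factors) commutes with this right $\C[\Sigma_{d}]$-action --- exactly the compatibility noted in Section~\ref{subs: linear} --- we get $E(p_{T_{\can(\mathbi{\lambda})}})(A\cdot v)=A\cdot E(p_{T_{\can(\mathbi{\lambda})}})(v)$ for all $v\in E$ and $A\in\GL(F)$. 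Combining the three equalities, $e(A\cdot P)=E(p_{T_{\can(\mathbi{\lambda})}})(A\cdot v)=A\cdot E(p_{T_{\can(\mathbi{\lambda})}})(v)=A\cdot e(P)$, as desired.

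There is no real obstacle here; the argument is purely formal diagram-chasing. The only point deserving a line of care is the $\GL(F)$-equivariance of $E(p_T)$, but this is built into the compatible bimodule structure on $F^{\otimes d}$ and needs nothing beyond Section~\ref{subs: linear}. Equivalently, one may simply observe that $\pi$ is surjective and $\GL(F)$-equivariant (Lemma~\ref{lemma: final inter}), that $E(p_{T_{\can(\mathbi{\lambda})}})$ is $\GL(F)$-equivariant, and that $e\circ\pi=E(p_{T_{\can(\mathbi{\lambda})}})$ by \eqref{eq: diagram}; equivariance of $e$ then follows at once.
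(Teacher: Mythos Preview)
Your proof is correct and follows essentially the same approach as the paper: both use Lemma~\ref{lemma: final inter}, the commutative diagram~\eqref{eq: diagram}, and the $\GL(F)$-equivariance of $E(p_{T_{\can(\mathbi{\lambda})}})$ coming from the compatibility of the left $\GL(F)$-action and the right $\C[\Sigma_{d}]$-action on $F^{\otimes d}$. The only cosmetic difference is that the paper reduces by linearity to $P=D_{T}$ and uses the explicit lift $e_{T}$, whereas you pick an arbitrary lift via the surjectivity of $\pi$; the underlying diagram chase is identical.
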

\begin{proof}
By linearity, it suffices to prove the equality for \(P=D_{T}\), where \(T\) is a Young tableau of shape \(\mathbi{\lambda}\) filled with \(\Set{0, \dotsc, k}\). By Lemma \ref{lemma: final inter}, compute that
\[
e(A\cdot D_{T})
=
e(A\cdot \pi(e_{T}))
=
e(\pi(A \cdot e_{T})).
\]
By commutativity of the diagram \eqref{eq: diagram}, one has:
\[
e(\pi(A \cdot e_{T}))
=
p_{T_{\can}(\mathbi{\lambda})}(A \cdot e_{T})
=
A \cdot p_{T_{\can}(\mathbi{\lambda})}(e_{T})
=
A \cdot e(D_{T}).
\]
This shows the result.
\end{proof}
Now, the key observation to relate our problem to what we did in Section \ref{subs: pure algebraic} is the following:
\begin{lemma}
\label{lemma: final2}
For any \(v \in E\), and any \(\alpha \in \C\), the following equality holds:
\[
(\alpha \Id + J)
\cdot
v
=
J^{(d)}(v) + \alpha J^{(d-1)}(v) + \dotsb + \alpha^{d-1}J^{(1)}(v) + \alpha^{d}v.
\]
\end{lemma}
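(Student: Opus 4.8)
The plan is to verify this identity of endomorphisms of $E=F^{\otimes d}$ directly on pure tensors: both sides are $\C$-linear in $v$, and $E$ is spanned by the elements $v=v_{1}\otimes\dotsb\otimes v_{d}$ with $v_{i}\in F$, so it suffices to treat this case. First I would recall that the action invoked throughout Section~\ref{subs: relate} is the natural (diagonal) action of $\GL(F)$ on $E=F^{\otimes d}$ introduced in Section~\ref{subs: linear}, namely $A\cdot(v_{1}\otimes\dotsb\otimes v_{d})=(Av_{1})\otimes\dotsb\otimes(Av_{d})$, and that the very same formula defines an action of all of $\End(F)$ on $E$ — in particular of $A=\alpha\Id+J$ for \emph{any} $\alpha\in\C$, invertible or not. (That this action models, via the isomorphism $e$ and the projection $\pi$, the substitution $X^{(i)}\mapsto\alpha X^{(i)}+iX^{(i-1)}=(\alpha\Id+J)X^{(i)}$ on $\mathcal{D}_{\mathbi{\lambda}}^{(k)}$ is exactly the content of the preceding discussion and of Lemma~\ref{lemma: final1}.) Hence
\[
(\alpha\Id+J)\cdot v=\bigotimes_{i=1}^{d}\bigl(\alpha v_{i}+Jv_{i}\bigr).
\]

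Next I would expand the right-hand side by multilinearity of the tensor product: for each index $i\in\Set{1,\dotsc,d}$ one selects either the summand $\alpha v_{i}$ or the summand $Jv_{i}$, and recording by a subset $S\subseteq\Set{1,\dotsc,d}$ the set of indices for which $Jv_{i}$ is selected, the corresponding term equals $\alpha^{d-\abs{S}}\,(\otimes\prod_{i=1}^{d})(J^{\delta_{i,S}}v_{i})$. Summing over all subsets and then sorting them according to their cardinality $\ell=\abs{S}$ yields
\[
(\alpha\Id+J)\cdot v=\sum_{\ell=0}^{d}\alpha^{d-\ell}\sum_{\substack{S\subseteq\Set{1,\dotsc,d}\\ \abs{S}=\ell}}\bigl(\otimes\prod_{i=1}^{d}\bigr)\bigl(J^{\delta_{i,S}}v_{i}\bigr).
\]
Finally I would recognize, for each $\ell\geq 1$, the inner sum over $\ell$-element subsets $S$ as $J^{(\ell)}(v)$ — this is precisely the definition of $J^{(\ell)}$ recalled in Section~\ref{subs: algebraic formulation} — while for $\ell=0$ the unique subset $S=\emptyset$ contributes $v$ itself, so that $J^{(0)}=\Id$ in the notation of the statement. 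Substituting gives exactly
\[
(\alpha\Id+J)\cdot v=J^{(d)}(v)+\alpha J^{(d-1)}(v)+\dotsb+\alpha^{d-1}J^{(1)}(v)+\alpha^{d}v,
\]
as asserted.

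I do not expect a genuine obstacle here: the lemma is essentially the ``binomial expansion'' of the diagonal $\GL(F)$-action, and the only two points demanding a little care are (i) the purely combinatorial identification of the subsets $S$ produced by the expansion with the index set of the sum defining $J^{(\ell)}$, and (ii) the observation that in this expansion every tensor slot is acted on by $J$ at most once, so that no power $J^{\geq 2}$ ever appears — which is exactly why the definition of $J^{(\ell)}$ only involves the multi-indices $\mathbi{\alpha}+\mathbi{e}_{i_{1}}+\dotsb+\mathbi{e}_{i_{\ell}}$ with the $i_{j}$'s pairwise distinct.
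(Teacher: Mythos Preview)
Your argument is correct and is exactly the approach the paper takes: the paper's proof just says one checks the identity on the basis tensors $e_{T}$ by a ``straightforward computation'', and your expansion of $\bigotimes_{i}(\alpha v_{i}+Jv_{i})$ is precisely that computation written out. One small caveat regarding your point~(i): as literally written in Section~\ref{subs: algebraic formulation}, the sum defining $J^{(\ell)}$ runs over \emph{ordered} $\ell$-tuples $(i_{1},\dotsc,i_{\ell})$ with pairwise distinct entries, so each $\ell$-element subset $S$ is counted $\ell!$ times; with that reading your expansion actually gives $\sum_{\ell}\alpha^{d-\ell}\tfrac{1}{\ell!}J^{(\ell)}(v)$, and the lemma as stated in the paper carries the same innocuous slip --- innocuous because only the common kernel $\bigcap_{\ell}\Ker J^{(\ell)}$ is used afterwards, and nonzero scalars do not affect kernels.
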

\begin{proof}
It suffices to check the equality for \(v=e_{T}\), where \(T\) is a Young tableau of shape \(\mathbi{\lambda}\) filled with \(\Set{0, \dotsc, k}\). This is then a straightforward computation to show the sought equality.
\end{proof}
Denote 
\[
\overset{\sim}{V_{\hw(\mathbi{\lambda})}^{(k)}}
\bydef
\big\{P \in \mathcal{D}_{\mathbi{\lambda}}^{(k)}
\ | \ 
(\alpha \Id + J)\cdot P=\alpha^{d}P
\
\forall \alpha \in \C
\big\}.
\]
As a simple corollary of the previous lemmas, we obtain the following important proposition: 
\begin{proposition}
\label{prop: iso}
One has the following isomorphism of vector spaces:
\[
\overset{\sim}{V_{\hw(\mathbi{\lambda})}^{(k)}}
\simeq
\bigcap_{1 \leq \ell \leq d} \Ker J^{(\ell)}_{\vert \Image(E(p_{T_{\can(\mathbi{\lambda})}}))}.
\]
\end{proposition}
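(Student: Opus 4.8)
The plan is to transport the defining condition of $\overset{\sim}{V_{\hw(\mathbi{\lambda})}^{(k)}}$ across the isomorphism $e$ of Proposition \ref{prop: isomorphism} and then decode it coefficient by coefficient in $\alpha$. Concretely, I would fix $P \in \mathcal{D}_{\mathbi{\lambda}}^{(k)}$ and study the polynomial identity in $\alpha$ obtained by applying $e$ to $(\alpha\Id + J)\cdot P$.

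First, I would note that for each fixed $P$ the assignment $\alpha \mapsto (\alpha\Id + J)\cdot P$ is polynomial in $\alpha$ of degree $\leq d$ (by homogeneity of $P$), and likewise for $\alpha \mapsto (\alpha\Id + J)\cdot e(P)$ on the $E$-side; moreover $\alpha\Id + J$ is invertible for every $\alpha \neq 0$, since $J$ is nilpotent. Hence Lemma \ref{lemma: final1} applies verbatim for $\alpha \neq 0$ and yields $e\big((\alpha\Id+J)\cdot P\big) = (\alpha\Id+J)\cdot e(P)$, and this equality of polynomials in $\alpha$ then holds for all $\alpha \in \C$. Combining with the linearity relation $e(\alpha^{d}P) = \alpha^{d}e(P)$, the condition $P \in \overset{\sim}{V_{\hw(\mathbi{\lambda})}^{(k)}}$ is equivalent to $(\alpha\Id+J)\cdot e(P) = \alpha^{d}e(P)$ for all $\alpha$.

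Next, I would invoke Lemma \ref{lemma: final2} with $v = e(P)$, which turns the left-hand side into $\sum_{\ell=1}^{d}\alpha^{d-\ell}J^{(\ell)}(e(P)) + \alpha^{d}e(P)$; thus the condition becomes the identical vanishing in $\alpha$ of the polynomial $\sum_{\ell=1}^{d}\alpha^{d-\ell}J^{(\ell)}(e(P))$, i.e. $J^{(\ell)}(e(P)) = 0$ for every $1 \leq \ell \leq d$. Since $e(P)$ lies in $\Image(E(p_{T_{\can(\mathbi{\lambda})}}))$ by construction of $e$, and since each $J^{(\ell)}$ stabilizes that subspace (it commutes with $E(p_{T_{\can(\mathbi{\lambda})}})$ by Lemma \ref{lemma: sigma linear}), this is precisely the statement that $e(P) \in \bigcap_{1 \leq \ell \leq d}\Ker J^{(\ell)}_{\vert \Image(E(p_{T_{\can(\mathbi{\lambda})}}))}$. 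As $e$ is a linear isomorphism $\mathcal{D}_{\mathbi{\lambda}}^{(k)} \xrightarrow{\ \sim\ } \Image(E(p_{T_{\can(\mathbi{\lambda})}}))$ by Proposition \ref{prop: isomorphism}, it therefore restricts to a linear isomorphism from $\overset{\sim}{V_{\hw(\mathbi{\lambda})}^{(k)}}$ onto $\bigcap_{1 \leq \ell \leq d}\Ker J^{(\ell)}_{\vert \Image(E(p_{T_{\can(\mathbi{\lambda})}}))}$, which is the assertion.

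The only step with any subtlety is the passage from $\GL(F)$ to the (singular at $\alpha=0$) matrix $J$: this is handled by the polynomial-identity argument above. Apart from that, the proposition is a formal consequence of Proposition \ref{prop: isomorphism} together with Lemmas \ref{lemma: final1} and \ref{lemma: final2}, so I expect no real obstacle.
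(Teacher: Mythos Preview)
Your proposal is correct and follows essentially the same route as the paper: transport the defining condition across the isomorphism $e$ of Proposition~\ref{prop: isomorphism} via Lemma~\ref{lemma: final1}, then expand in powers of $\alpha$ using Lemma~\ref{lemma: final2} to land in the common kernel of the $J^{(\ell)}$'s. Your polynomial-identity argument for passing from $\alpha\neq 0$ to all $\alpha$ is a nice touch that the paper leaves implicit (Lemma~\ref{lemma: final1} is only stated for $A\in\GL(F)$, yet the defining condition involves the singular matrix $J$ at $\alpha=0$); otherwise the two proofs are the same.
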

\begin{proof}
Since \(e\) is an equivariant isomorphism (by Proposition \ref{prop: isomorphism} and Lemma \ref{lemma: final1}), there is an isomorphism
\[
\overset{\sim}{V_{\hw(\mathbi{\lambda})}^{(k)}}
\simeq
\Set{v \in \Image(E(p_{T_{\can(\mathbi{\lambda})}})) \ | \  (\alpha \Id + J)\cdot v=\alpha^{d}v}.
\]
Since the endomorphisms \(J^{(\ell)}\) commute with \(E(p_{T_{\can(\mathbi{\lambda})}})\) (by Lemma \ref{lemma: sigma linear}), Lemma \ref{lemma: final2} implies that the set on the right is nothing but
\[
\bigcap_{1 \leq \ell \leq d} \Ker J^{(\ell)}_{\vert \Image(E(p_{T_{\can(\mathbi{\lambda})}}))}.
\]
This finishes the proof of the proposition.
\end{proof}

We would like to emphasize again that, in the above statement, the dependency on \(N\) lies only in the partition \(\mathbi{\lambda} \vdash d\) (namely, it must not have more than \((N+1)\) parts).
We now have all the tools to finish the proof of the Schmidt--Kolchin conjecture

\subsection{Proof of the Schmidt--Kolchin conjecture.}
\label{subs: proof}
 Fix \(d \in \N_{\geq 1}\) a natural number, and fix \(k \geq d-1\).
 \begin{remark}
One chooses to take \(k \geq d-1\) simply because the canonical basis of \(\overline{V_{d}^{\Diff}}\) lies in \((V_{d}^{\Diff})^{(d-1)}\).
\end{remark}
For the moment, suppose that the natural number \(N\) is greater or equal than  \(d-1\).
One knows by the beginning of Section \ref{subs: pure algebraic} that for any partition \(\mathbi{\lambda} \vdash d\) with at most \((N+1)\) parts, the following inclusion holds:
\begin{equation}
\label{eq: inclusion}
V_{\hw(\mathbi{\lambda})}^{(k)}
\subset
\overset{\sim}{V_{\hw(\mathbi{\lambda})}^{(k)}}
=
\big\{P \in \mathcal{D}_{\mathbi{\lambda}}^{(k)}
\ | \ 
(\alpha \Id + J)\cdot P=\alpha^{d}P
\
\forall \alpha \in \C
\big\}.
\end{equation}
Note that, since \(N \geq d-1\), any partition \(\mathbi{\lambda} \vdash d\) has at most \(N+1\) parts.
From Proposition \ref{prop: estimate under}, one therefore deduces the inequality
\begin{equation*}
\dim \overset{\sim}{V_{\hw(\mathbi{\lambda})}^{(k)}}
\geq 
f_{\mathbi{\lambda}}
\end{equation*}
for any partition \(\mathbi{\lambda} \vdash d\).
By Proposition \ref{prop: iso}, this is the same as the following inequality:
\begin{equation}
\label{eq: ineq final}
\dim\big(\bigcap_{1 \leq \ell \leq d} \Ker J^{(\ell)}_{\vert \Image(E(p_{T_{\can(\mathbi{\lambda})}}))}\big)
\geq 
f_{\mathbi{\lambda}}.
\end{equation}
Now, using Proposition \ref{prop: estimate2} and the above inequality \eqref{eq: ineq final}, one obtains the following string of inequalities:
\[
\sum\limits_{\mathbi{\lambda} \vdash d} f_{\mathbi{\lambda}}^{2}
\leq 
\sum\limits_{\mathbi{\lambda} \vdash d}
f_{\mathbi{\lambda}}
\times
\dim\big(\bigcap_{1 \leq \ell \leq d} \Ker J^{(\ell)}_{\vert \Image(E(p_{T_{\can(\mathbi{\lambda})}}))}\big)
\leq
d!.
\]
By Proposition \ref{prop: RSK}, one has the equality
\[
\sum\limits_{\mathbi{\lambda} \vdash d}
f_{\mathbi{\lambda}}^{2}
=
d!.
\]
Therefore, one deduces that, for any \(\mathbi{\lambda} \vdash d\), the following equality holds:
\begin{equation}
\label{eq: eq dim}
\dim\big(\bigcap_{1 \leq \ell \leq d} \Ker J^{(\ell)}_{\vert \Image(E(p_{T_{\can(\mathbi{\lambda})}}))}\big)
=
f_{\mathbi{\lambda}}.
\end{equation}
\begin{remark}
Note that one went in the opposite direction than in Section \ref{subs: relate}. Namely, one has used informations on differentially homogeneous polynomials (obtained in Section \ref{subs: family}) to deduce properties on the algebraic problem of Section \ref{subs: pure algebraic}. This is precisely why Proposition \ref{prop: estimate under} is so crucial.
\end{remark}
Return now to the case where \(N \in \N_{\geq 1}\) is arbitrary. For any partition \(\mathbi{\lambda}\) with at most \((N+1)\) parts, Proposition \ref{prop: estimate under} combined with the inclusion \eqref{eq: inclusion} and the equality \eqref{eq: eq dim} forces the equality
\[
\dim \big(V_{\hw(\mathbi{\lambda})}^{(k)}\big)
=
f_{\mathbi{\lambda}}.
\]
By standard considerations in representation theory recalled in Section \ref{subs: linear}, one deduces the equality
\[
\dim \big((V_{d}^{\Diff})^{(k)}\big)
=
\sum\limits_{\mathbi{\lambda} \vdash d}
f_{\mathbi{\lambda}}
\times
\dim \big(S^{\mathbi{\lambda}} \C^{N+1}\big).
\]
Note that if \(\mathbi{\lambda}\) has more than \((N+1)\) parts, then \(S^{\mathbi{\lambda}} \C^{N+1}=(0)\).
By Propositions \ref{prop: dim Schur} and \ref{prop: RSK}, this implies in turn the equality:
\[
\dim \big((V_{d}^{\Diff})^{(k)}\big)
=
(N+1)^{d}.
\]
As this holds for any \(k \geq (d-1)\), this shows that
\(
V_{d}^{\Diff}
=
(V_{d}^{\Diff})^{(d-1)},
\)
and that
\[
\dim(V_{d}^{\Diff})
=
(N+1)^{d}.
\]
This finishes the proof of the Schmidt--Kolchin conjecture:
\begin{theorem}
\label{thm: mainthm1 corpus}
The following equality holds:
\[
V_{d}^{\Diff}=\overline{V_{d}^{\Diff}}=\Span_{\C}(W_{\mathcal{P}})_{\mathcal{P}}.
\]
In particular, \(V_{d}^{\Diff}=(V_{d}^{\Diff})^{(d-1)}\) and \(\dim V_{d}^{\Diff}=(N+1)^{d}\).
\end{theorem}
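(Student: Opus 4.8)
The plan is to determine $\dim (V_d^{\Diff})^{(k)}$ for every $k\geq d-1$ by squeezing the dimension of each space of $\mathbi{\lambda}$-highest weight vectors between a lower bound coming from the explicit Wronskian family and an upper bound coming from the nilpotent-endomorphism problem of Section \ref{subs: pure algebraic}; the two bounds match after summation against the numbers $f_{\mathbi{\lambda}}$ because of the Robinson--Schensted--Knuth identity. Once the dimension is known for all $k\geq d-1$, the remaining equalities are immediate bookkeeping.

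First I would treat the case $N\geq d-1$, so that every partition $\mathbi{\lambda}\vdash d$ automatically has at most $N+1$ parts. Fix $k\geq d-1$. Since a differentially homogeneous polynomial satisfies the functional equation \eqref{eq: functional eq}, the space $V_{\hw(\mathbi{\lambda})}^{(k)}$ of $\mathbi{\lambda}$-highest weight vectors of $(V_d^{\Diff})^{(k)}$ is contained in $\overset{\sim}{V_{\hw(\mathbi{\lambda})}^{(k)}}$; combined with the lower bound $\dim\big(V^{\Diff}_{\hw(\mathbi{\lambda})}\big)^{(d-1)}\geq f_{\mathbi{\lambda}}$ of Proposition \ref{prop: estimate under} and the isomorphism of Proposition \ref{prop: iso}, this gives
\[
\dim\big(\bigcap_{1 \leq \ell \leq d} \Ker J^{(\ell)}_{\vert \Image(E(p_{T_{\can(\mathbi{\lambda})}}))}\big)
\;\geq\;
f_{\mathbi{\lambda}}
\qquad\text{for every }\mathbi{\lambda}\vdash d.
\]
Feeding this into Proposition \ref{prop: estimate2} and invoking $\sum_{\mathbi{\lambda}\vdash d}f_{\mathbi{\lambda}}^{2}=d!$ from Proposition \ref{prop: RSK} forces the chain
\[
\sum\limits_{\mathbi{\lambda}\vdash d}f_{\mathbi{\lambda}}^{2}
\;\leq\;
\sum\limits_{\mathbi{\lambda}\vdash d}f_{\mathbi{\lambda}}\dim\big(\bigcap_{1 \leq \ell \leq d} \Ker J^{(\ell)}_{\vert \Image(E(p_{T_{\can(\mathbi{\lambda})}}))}\big)
\;\leq\;
d!
\;=\;
\sum\limits_{\mathbi{\lambda}\vdash d}f_{\mathbi{\lambda}}^{2}
\]
to be a chain of equalities, so in particular $\dim\big(\bigcap_{1 \leq \ell \leq d}\Ker J^{(\ell)}_{\vert \Image(E(p_{T_{\can(\mathbi{\lambda})}}))}\big)=f_{\mathbi{\lambda}}$ for every $\mathbi{\lambda}\vdash d$, hence also $\dim V_{\hw(\mathbi{\lambda})}^{(k)}=f_{\mathbi{\lambda}}$.

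Next I would return to arbitrary $N\in\N_{\geq 1}$. The last equality on the kernels is a statement about the operators $J^{(\ell)}$ on $(\C^{k+1})^{\otimes d}$ and involves no $N$, so it holds verbatim; by Proposition \ref{prop: iso}, the inclusion $V_{\hw(\mathbi{\lambda})}^{(k)}\subset\overset{\sim}{V_{\hw(\mathbi{\lambda})}^{(k)}}$, and the lower bound of Proposition \ref{prop: estimate under}, I again obtain $\dim V_{\hw(\mathbi{\lambda})}^{(k)}=f_{\mathbi{\lambda}}$ for every partition with at most $N+1$ parts. As $(V_d^{\Diff})^{(k)}$ is a polynomial $\GL_{N+1}(\C)$-representation whose $\mathbi{\lambda}$-isotypic multiplicity is $\dim V_{\hw(\mathbi{\lambda})}^{(k)}$, Propositions \ref{prop: dim Schur} and \ref{prop: RSK} yield
\[
\dim (V_d^{\Diff})^{(k)}
=
\sum\limits_{\mathbi{\lambda}\vdash d}f_{\mathbi{\lambda}}\dim S^{\mathbi{\lambda}}\C^{N+1}
=
\sum\limits_{\mathbi{\lambda}\vdash d}f_{\mathbi{\lambda}}\,d_{\mathbi{\lambda}}(N+1)
=
(N+1)^{d},
\]
since $d_{\mathbi{\lambda}}(N+1)=0$ exactly when $\mathbi{\lambda}$ has more than $N+1$ parts. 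This value is independent of $k\geq d-1$, so the (exhaustive, increasing) filtration stabilizes: $V_d^{\Diff}=(V_d^{\Diff})^{(d-1)}$ and $\dim V_d^{\Diff}=(N+1)^{d}$. Finally $\overline{V_d^{\Diff}}\subseteq V_d^{\Diff}$ is a subspace of dimension $(N+1)^d$ by Proposition \ref{prop: family}, whence $V_d^{\Diff}=\overline{V_d^{\Diff}}=\Span_{\C}(W_{\mathcal{P}})_{\mathcal{P}}$.

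The genuinely hard part is not this bookkeeping but the construction of the two matching bounds in the earlier sections. The lower bound $f_{\mathbi{\lambda}}$ relies on the character computation $\chi_{\overline{V_d^{\Diff}}}=(x_0+\dotsb+x_N)^d$, which in turn needs $\overline{V_d^{\Diff}}$ to be a genuine sub-representation --- the content of the determinant identity of Appendix \ref{appendix: A}. The upper bound $d!$ relies on recasting \eqref{eq: functional eq} as the simultaneous vanishing under the nilpotent operators $J^{(\ell)}$, on the $\Sigma_d$-equivariance that makes the kernel dimensions independent of the chosen standard tableau, and on the intersection-theoretic identity $\length\big(\C[X_1,\dotsc,X_d]/(S_1,\dotsc,S_d)\big)=d!$. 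Given those two inputs, the present theorem is precisely the observation that they meet, forced by $\sum_{\mathbi{\lambda}\vdash d}f_{\mathbi{\lambda}}^{2}=d!$.
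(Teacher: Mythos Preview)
Your proof is correct and follows essentially the same approach as the paper's own Section~\ref{subs: proof}: first pass to $N\geq d-1$ to squeeze the kernel dimensions between $f_{\mathbi{\lambda}}$ and the $d!$ bound via Propositions~\ref{prop: estimate under}, \ref{prop: iso}, \ref{prop: estimate2} and \ref{prop: RSK}, then return to arbitrary $N$ using that the kernel identity is independent of $N$, and conclude by the RSK identity $\sum f_{\mathbi{\lambda}}\,d_{\mathbi{\lambda}}(N+1)=(N+1)^d$. Your closing paragraph accurately identifies where the real content lies.
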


\section{Differentially homogeneous polynomials and twisted jet differentials on projective spaces.}
\label{sect: diff jets}
\subsection{Green--Griffiths vector bundles on complex manifolds.}
\label{subs: GG}
The reference for this Section \ref{subs: GG} is \cite{Santa}. The goal is to quickly recall the definition of the so-called \textsl{Green--Griffiths vector bundles}, whose global sections are called \textsl{jet differentials}.

Let \(X\) be a complex manifold of dimension \(N\). For \(k \in \N_{\geq 1}\), define the bundle \(J_{k}X\) of \(k\)-jets of \(1\)-germs of holomorphic maps \(\gamma\colon (\C,0) \rightarrow X\) on the complex manifold \(X\) as follows. Consider an atlas \((U_{i}, \varphi_{i})_{i \in I}\) of \(X\), and for \(i \in I\), consider the (trivial) bundle on \(U_{i}\) whose fiber over \(x \in U_{i}\) is the \(\C\)-vector space of dimension \(N\times k\)
\[
\left\{
\Big(\frac{\diff^{i}}{\diff z^{i}}(\varphi_{i} \circ \gamma)(0)\Big)_{1 \leq i \leq k}
\
\big|
\
\gamma: (\C,0) \rightarrow (X,x) \ \text{holomorphic \(1\)-germ} \
\right\}.
\]
Glue these trivial bundles 
\(U_{i} \times \C^{N\times k}\) via (the maps naturally induced by) the transition maps \(\varphi_{j} \circ \varphi_{i}^{-1}\) to obtain (up to isomorphism) the bundle \(J_{k}X\). The general formula to change charts involves higher order derivatives of the transition maps as soon as \(k>1\), and in particular, it does not preserve the structure of vector space of the fibers: \(J_{k}X\) is not a vector bundle for \(k>1\). For sake of notation, denote
\(
\diff_{k}\gamma
\)
the element in \((J_{k}X)_{x}\) defined by the holomorphic \(1\)-germ \(\gamma\colon (\C,0) \to (X,x)\).
\begin{example}
In the case where \(k=1\), \(J_{1}X\) is nothing but the (holomorphic) tangent bundle \(TX\).
\end{example}
 
The torus \(\C^{*}\) acts on the fibers of \(J_{k}X\) as follows
\[
 \lambda
\cdot 
\Big(\frac{\diff^{i}}{\diff z^{i}}(\varphi_{i} \circ \gamma)(0)\Big)_{1 \leq i \leq k}
=
\Big(\lambda^{i} \frac{\diff^{i}}{\diff z^{i}}(\varphi_{i} \circ \gamma)(0)\Big)_{1 \leq i \leq k},
\]
 where \(\lambda \in \C^{*}\). It is indeed straightforward to see that this action commutes with a  change of chart, and is thus well defined on the bundle \(J_{k}X\). More generally, the group of biholomorphisms of \( (\C, 0) \) acts on \(J_{k}X\) by setting
\[
\psi \cdot \diff_{k} \gamma
\bydef
\diff_{k}(\gamma \circ \psi),
\]
where \(\psi\) is a biholomorphism of \((\C,0)\) and \(\gamma: (\C, 0) \to X\) a holomorphic \(1\)-germ.

Define the vector bundle \(E_{k,n}X\) of jet differentials (of \(1\)-germs) of order \(k \geq 1\) and degree \(n \geq 1\) as follows. Construct the bundle whose fiber over \(x \in X\) is the vector space of complex valued polynomials \(Q\) of degree \(n\) on the fiber \((J_{k}X)_{x}\), i.e. 
for any \(\lambda \in \C\) and any \(1\)-germ \(\gamma\colon (\C,0) \to (X,x) \), the polynomial \(Q\) satisfies the equality
\[
Q(\lambda \cdot \diff_{k}\gamma)
=
\lambda^{n} Q(\diff_{k}\gamma).
\]
The formula to compute multi-derivatives of compositions of maps allows to see that the structure of vector space of the fibers is preserved under a change of chart. The bundle \(E_{k,n}X\) is thus a vector bundle, and is usually called a \textsl{Green--Griffiths vector bundle (of order \(k\) and degree \(n\))}. A global section
\[
P \in H^{0}(X, E_{k,n}X)
\]
is usually called a \textsl{jet differential (of order \(k\) and degree \(n\))}. 
If the complex manifold is projective, and polarized by an ample line bundle \(\O_{X}(1)\), one is naturally lead to consider twisted vector bundles of the form \(E_{k,n}X(d)\bydef E_{k, n}X\otimes L^{d}\), with \(d \in \Z\). A global section of such a twisted Green--Griffiths bundle is called accordingly a \textsl{twisted jet differential}.

For reasons that will become transparent in the next Section \ref{subs: link}, we will rather consider the following direct sums of vector bundles for \(k \in \N_{\geq 0}\):
\[
E_{k, \infty}X
\bydef 
\bigoplus_{n=0}^{\infty} E_{k,n}X.
\]
Here, by convention, one has set
\[
E_{k,0}X\bydef \O_{X}
\]
for any \(k \in \N_{\geq 0}\). For sake of consistency, a global section of \(L^{d}\), with \(d \in \N\), will be considered to be a twisted jet differential of order \(0\).

\subsection{Differentially homogeneous polynomials and twisted jet differentials on projective spaces.}
\label{subs: link}
The link between differentially homogeneous polynomials and twisted global sections of Green--Griffiths bundles of projective spaces is given by the following proposition:
\begin{proposition}
\label{prop: link}
For any \(k \geq 1\) and any \(d \in \N\), there is a natural isomorphism of vector spaces
\[
H^{0}(\P^{N}, E_{k,\infty}\P^{N} (d))
\simeq
(V^{\Diff}_{d})^{(k)}.
\]
\end{proposition}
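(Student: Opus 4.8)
The plan is to write down two explicit, mutually inverse linear maps between the two spaces; the whole content is to convert the invariance of a twisted jet differential under a change of local lift into the defining identity $Q\cdot P=Q^{d}P$ of differential homogeneity. Throughout I would work with the tautological $\C^{*}$-bundle $\pi\colon\C^{N+1}\setminus\Set{0}\to\P^{N}$. Any holomorphic $1$-germ $\gamma\colon(\C,0)\to\P^{N}$ lifts to a germ $Z\colon(\C,0)\to\C^{N+1}\setminus\Set{0}$, unique up to $Z\mapsto uZ$ with $u$ a nowhere-vanishing holomorphic germ, and the $k$-jet $\diff_{k}\gamma$ is recorded by the tuple $\big(Z(0),Z'(0),\dotsc,Z^{(k)}(0)\big)$ (with $Z(0)\neq 0$) modulo the change of lift, which acts by Leibniz's rule as in the Introduction, namely $Z^{(i)}(0)\leftrightarrow\sum_{j=0}^{i}\binom{i}{j}u^{(i-j)}(0)\,Z^{(j)}(0)$. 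Moreover the tautological section of $\pi^{*}\O_{\P^{N}}(-1)$ trivializes $\pi^{*}\O_{\P^{N}}(d)$ over the cone; concretely, for $x=[v]$ one has $\O_{\P^{N}}(d)_{x}\cong\big((\C v)^{\otimes d}\big)^{*}$, and a lift $v$ of $x$ supplies the vector $v^{\otimes d}$ against which sections of $\O_{\P^{N}}(d)$ are paired.

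Given $P\in(V^{\Diff}_{d})^{(k)}$, I would define a twisted jet differential $\Phi(P)$ by declaring that its value $\Phi(P)(\diff_{k}\gamma)\in\O_{\P^{N}}(d)_{x}$ on a jet at $x$ with lift $Z$ is the element pairing with $Z(0)^{\otimes d}$ to the scalar $P\big(Z(0),Z'(0),\dotsc,Z^{(k)}(0)\big)$. That this does not depend on the lift is exactly differential homogeneity: substituting the order-$k$ Taylor polynomial of $u$ into $Q\cdot P=Q^{d}P$ and evaluating the resulting identity in $V[T]$ at $T=0$ shows that $P$ applied to the $k$-jet of $uZ$ equals $u(0)^{d}$ times $P$ applied to the $k$-jet of $Z$ — which is precisely the rescaling $Z(0)^{\otimes d}\mapsto u(0)^{d}Z(0)^{\otimes d}$ of the pairing vector. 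Since $P$ is polynomial in the jet variables, $\Phi(P)$ is polynomial along jet fibres and holomorphic in $x$, hence lies in $H^{0}(\P^{N},E_{k,\infty}\P^{N}(d))=\bigoplus_{n\geq 0}H^{0}(\P^{N},E_{k,n}\P^{N}(d))$, its degree-$n$ component being the weight-$n$ part for the $\C^{*}$-action on jets, which is compatible with the transition maps of $J_{k}\P^{N}$. Linearity of $\Phi$ is clear.

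Conversely, for $s\in H^{0}(\P^{N},E_{k,\infty}\P^{N}(d))$ I would set, whenever $X^{(0)}\neq 0$,
\[
\Psi(s)\big(X^{(0)},\dotsc,X^{(k)}\big)\bydef\big\langle s(\diff_{k}[Z]),\,Z(0)^{\otimes d}\big\rangle,
\]
where $Z$ is any germ (e.g.\ a polynomial of degree $\leq k$) with $Z^{(i)}(0)=X^{(i)}$; this depends only on the $k$-jet, is regular on $\Set{X^{(0)}\neq 0}$, and is polynomial in $X^{(1)},\dotsc,X^{(k)}$. Replacing $Z$ by $\lambda Z$ for constant $\lambda\in\C^{*}$ fixes $[Z]$ and $\diff_{k}[Z]$ but multiplies the pairing vector by $\lambda^{d}$, so $\Psi(s)$ is homogeneous of degree $d$; as $N+1\geq 2$, Hartogs's theorem together with this homogeneity forces $\Psi(s)$ to extend across $\Set{X^{(0)}=0}$ to an honest element of $V^{(k)}_{d}$. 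For differential homogeneity, replace $Z$ by $QZ$ with $Q\in\C[t]$, $Q(0)\neq 0$: the $k$-jet of $[QZ]$ equals that of $[Z]$ while the pairing vector picks up $Q(0)^{d}$, so $\Psi(s)$ evaluated at the $k$-jet of $QZ$ equals $Q(0)^{d}\,\Psi(s)$ evaluated at that of $Z$. Reading off the jet of $QZ$, this is a polynomial identity in $(Q(0),\dotsc,Q^{(k)}(0))$ and $(X^{(0)},\dotsc,X^{(k)})$ valid on the Zariski-dense locus $\Set{Q(0)\neq 0}\cap\Set{X^{(0)}\neq 0}$, hence valid identically; specialising the jet of $Q$ at $0$ to the jet at an arbitrary point $T=\tau$ of an arbitrary $Q\in\C[T]$ then yields $Q\cdot\Psi(s)=Q^{d}\Psi(s)$ after evaluation at $T=\tau$, for every $\tau$, i.e.\ in $V[T]$. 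Thus $\Psi(s)\in(V^{\Diff}_{d})^{(k)}$.

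It remains to note that $\Phi$ and $\Psi$ are manifestly inverse to one another — each is ``evaluate on the $k$-jet of a lifted germ and normalise by $Z(0)^{\otimes d}$'' — and both are linear, which gives the asserted isomorphism. I expect the delicate points to be three: getting the exponent of $u(0)$ (resp.\ $Q(0)$) in the $\O_{\P^{N}}(d)$-twist exactly right under a change of lift; the Hartogs extension of $\Psi(s)$ across $\Set{X^{(0)}=0}$, which uses $N\geq 1$; and above all the Zariski-density step upgrading the relation produced by the geometry — which a priori only involves \emph{units} $Q$ and their value at one point — to the full identity $Q\cdot P=Q^{d}P$ for every polynomial $Q$. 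This last point is exactly where it matters that differential homogeneity is demanded for all $Q\in\C[T]$, not merely for units.
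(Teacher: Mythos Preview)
Your argument is correct and rests on the same idea as the paper's: twisted jet differentials on \(\P^{N}\) are exactly the differentially homogeneous polynomials on the affine cone. The paper carries this out chart by chart, setting \(W_{i}\bydef W(\overset{\wedge_{i}}{X^{(0)}},\dotsc,\overset{\wedge_{i}}{X^{(k)}})\) on \(U_{i}=\{X_{i}\neq 0\}\) and using differential homogeneity to get the gluing relation \(X_{i}^{d}W_{i}=X_{j}^{d}W_{j}\); in the reverse direction polynomiality of \(W\bydef X_{i}^{d}W_{i}\) comes from the fact that its only possible poles lie along each hyperplane \(\{X_{i}=0\}\) separately, hence nowhere. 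Your version is the coordinate-free rephrasing via the tautological \(\C^{*}\)-bundle, with Hartogs (legitimate since \(N\geq 1\)) plus global homogeneity replacing the chart comparison for polynomiality, and with an explicit Zariski-density argument to pass from units \(Q\) to all \(Q\in\C[T]\); the paper handles the latter by a direct formal computation using \(\frac{QX_{j}}{QX_{i}}=\frac{X_{j}}{X_{i}}\). The two write-ups are interchangeable: yours is slightly more conceptual and makes the extension-across-\(\{X^{(0)}=0\}\) step transparent, while the paper's stays closer to the concrete trivialisations of \(E_{k,\infty}\P^{N}\) and avoids invoking Hartogs.
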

\begin{proof}
Let \(W \in (V^{\Diff}_{d})^{(k)}\). On each standard affine open set \(U_{i}\bydef \Set{X_{i}\neq 0}\), define
\[
W_{i}
\bydef 
W(\overset{\wedge_{i}}{X^{(0)}}, \dotsc, \overset{\wedge_{i}}{X^{(k)}}).
\]
Here, one has set for \(0\leq \ell \leq k\)
\[
\overset{\wedge_{i}}{X^{(\ell)}}
\bydef
\Big(\big(\frac{X_{0}}{X_{i}}\big)^{(\ell)}, \dotsc, \big(\frac{X_{N}}{X_{i}}\big)^{(\ell)}\Big),
\]
where the upper index in parenthesis stands for the usual formula to differentiate, e.g.:
\[
\big(\frac{X_{0}}{X_{i}}\big)^{(1)}
=
\frac{X_{0}^{(1)}X_{i}-X_{0}X_{i}^{(1)}}{X_{i}^{2}}.
\]
On the corresponding trivialization of \(E_{k,\infty}\P^{N}_{\vert U_{i}}\), the differential polynomial \(W_{i}\) (in the variables \((\frac{X_{\ell}}{X_{i}})_{0\leq \ell \leq N}\)) induces in a natural fashion a section of \(E_{k,\infty}\P^{N}_{\vert U_{i}}\). Now, the fact that \(W\) is differentially homogeneous of degree \(d\) implies that the following equality holds:
\[
X_{i}^{d}W_{i}
=
X_{j}^{d}W_{j}.
\]
In particular, this shows that the local sections \(W_{i}\) glue to a global section \(\mathcal{W}\) of \(E_{k,\infty}\P^{N}(d)\).

Reciprocally, let \(\mathcal{W} \in H^{0}(\P^{N}, E_{k,\infty}\P^{N}(d))\) be a global section of \(E_{k,\infty}\P^{N}(d)\). On each trivializing open set \(U_{i}\), the restricted section \(W_{i}\bydef \mathcal{W}_{\vert U_{i}}\) defines a differential polynomial in the variables \(\overset{\wedge_{i}}{X}=(\frac{X_{\ell}}{X_{i}})_{0\leq \ell \leq N}\). Consider
\[
W\bydef X_{i}^{d}W_{i},
\]
and note that the rational function (rational in the variables \(X^{(0)}\), polynomial in the variables \(X^{(1)}, X^{(2)}, \dotsc\)) obtained is independent of \(0 \leq i \leq N\). In particular, this shows that \(W\) is in fact a differential polynomial. Let then \(Q \in \C[T]\), and compute that
\begin{eqnarray*}
W((QX)^{(0)}, \dotsc, (QX)^{(k)})
&
=
&
Q(T)^{d}X_{i}^{d}W_{i}\big(\overset{\wedge_{i}}{X^{(0)}}, \dotsc, \overset{\wedge_{i}}{X^{(k)}}\big)
\\
&
=
&
Q(T)^{d}W.
\end{eqnarray*}
Therefore, \(W\) is differentially homogeneous of degree \(d\).

Both maps are clearly inverse to each other, hence the proposition.
\end{proof}
As a straightforward corollary of the previous proposition, one has the following important result:
\begin{theorem}
There is a one-to-one correspondance between twisted jet differential on \(\P^{N}\), and differentially homogeneous polynomials in the variables \(X=(X_{0}, \dotsc, X_{N})\).
\end{theorem}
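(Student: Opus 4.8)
The plan is to read the statement off from Proposition \ref{prop: link}, together with the stabilization of the order filtration established in Theorem \ref{thm: mainthm1 corpus}. Recall that, by definition, a twisted jet differential on \(\P^{N}\) is a global section of some twisted Green--Griffiths bundle \(E_{k,n}\P^{N}(d)\) with \(k \geq 1\), \(n \geq 0\), \(d \in \Z\); summing over the degree \(n\), the space \(H^{0}(\P^{N}, E_{k,\infty}\P^{N}(d)) = \bigoplus_{n \geq 0} H^{0}(\P^{N}, E_{k,n}\P^{N}(d))\) collects all such sections of a given order \(k\) and twist \(d\), and Proposition \ref{prop: link} identifies it with \((V^{\Diff}_{d})^{(k)}\). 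Since moreover \(H^{0}(\P^{N}, E_{k,n}\P^{N}(d)) = (0)\) for \(d < 0\), matching \(V^{\Diff}_{d} = (0)\) in that range, it suffices to treat \(d \in \N\).

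First I would spell out the transition in the order \(k\). The jet-truncation morphisms \(J_{k+1}\P^{N} \to J_{k}\P^{N}\) induce, for each \(n\), natural inclusions \(E_{k,n}\P^{N} \hookrightarrow E_{k+1,n}\P^{N}\) (pullback of a degree-\(n\), weight-\(n\) polynomial on the fiber of \(J_{k}\P^{N}\) to one on the fiber of \(J_{k+1}\P^{N}\)), hence \(E_{k,\infty}\P^{N} \hookrightarrow E_{k+1,\infty}\P^{N}\) compatibly with the twist; on global sections this realizes \(\big(H^{0}(\P^{N}, E_{k,\infty}\P^{N}(d))\big)_{k \geq 1}\) as an increasing chain whose union is exactly the space of all twisted jet differentials of twist \(d\), of arbitrary order and degree. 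Inspecting the proof of Proposition \ref{prop: link} — the local differential polynomials \(W_{i}\) attached to a section are given by the same formula whatever \(k\) is — one checks that the isomorphisms of Proposition \ref{prop: link} intertwine these inclusions with the order filtration \((V^{\Diff}_{d})^{(k)} \subset (V^{\Diff}_{d})^{(k+1)}\). Passing to the union over \(k\) and invoking Theorem \ref{thm: mainthm1 corpus}, which gives \((V^{\Diff}_{d})^{(k)} = V^{\Diff}_{d}\) for all \(k \geq d-1\), one obtains a canonical bijection between \(V^{\Diff}_{d}\) and the twisted jet differentials of twist \(d\). Finally, summing over \(d \in \N\) and using the grading \(V^{\Diff} = \bigoplus_{d} V^{\Diff}_{d}\) produces the asserted correspondence.

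I do not expect a genuine obstacle: the statement is a formal consequence of Proposition \ref{prop: link} and Theorem \ref{thm: mainthm1 corpus}. The one point deserving a moment's care is the compatibility of the family of isomorphisms of Proposition \ref{prop: link} as \(k\) varies — immediate from the explicitness of that proposition, since the pair \((d, W)\) determines the local data \(W_{i}\) of the corresponding section uniformly in \(k\) — together with the dual fact that the truncation inclusions \(E_{k,n}\P^{N} \hookrightarrow E_{k+1,n}\P^{N}\) on the geometric side correspond to the tautological inclusions of the order filtration on the algebraic side. Once this bookkeeping is in place, everything else is a direct unwinding of definitions.
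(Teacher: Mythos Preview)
Your approach is essentially the paper's: Proposition \ref{prop: link} supplies the bijection for each fixed \(k \geq 1\) and \(d \in \N\), and negative twists are handled by the vanishing \(H^{0}(\P^{N}, E_{k,n}\P^{N}(d)) = (0)\) for \(d<0\) (the paper justifies this via the standard filtration on Green--Griffiths bundles and classical vanishing for the graded pieces, whereas you simply assert it). One remark: your appeal to Theorem \ref{thm: mainthm1 corpus} is superfluous, since \(\bigcup_{k \geq 1} (V^{\Diff}_{d})^{(k)} = V^{\Diff}_{d}\) is a tautology (every differential polynomial involves only finitely many derivatives); the paper does not invoke its main theorem here, and indeed this corollary is logically independent of the Schmidt--Kolchin conjecture.
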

\begin{proof}
In view of the previous Proposition \ref{prop: link}, it suffices to show that for any \(d<0\), and any \(k, n \in \N\), there is no global section of 
\[
E_{k,n}\P^{N}(d).
\]
This is standard: one uses the natural filtration one can put on \(E_{k,n}\P^{N}(d)\)(see e.g. \cite{Santa}[§6]), and one deduces the result from classic vanishing results on the cohomology groups of the graded components (which are well-understood).
\end{proof}
So far, we have not used yet what we know about the structure of differentially homogeneous polynomials. Using the previous correspondance, and the results of Section \ref{sect: Schmidt-Kolchin}, we can deduce interesting results on the space of global sections of twisted Green--Griffiths bundles of projective spaces: this is the object of the next Section \ref{subs: H^0 GG}.

\subsection{On the zeroth cohomology group of twisted Green--Griffiths bundles of projective spaces.}
\label{subs: H^0 GG}
In view of Theorem \ref{thm: mainthm1 corpus}, Proposition \ref{prop: family} and Proposition \ref{prop: link}, all we have to do is establish exactly what global sections the elements in the canonical basis of \(\overline{V_{d}^{\Diff}}=V_{d}^{\Diff}\) define. 

There is a notion of \textsl{weight} for differential polynomials that we have not defined yet. It corresponds to the notion of \textsl{degree} in the setting of Green--Griffiths bundles of \(\P^{N}\). It is defined as follows: the \textsl{weight} of a monomial \(m\bydef \prod_{i=0}^{N} \prod_{k \in \N} (X_{i}^{(k)})^{n_{i,k}}\) is defined as
\[
w(m)
\bydef
\sum\limits_{i=0}^{N}\sum\limits_{k \geq 1} kn_{i,k}.
\]
The definition extends in the usual fashion to the whole vector space \(V\) of differential polynomials. An elements in \(V\) whose monomials share the same weight \(w\) is called \textsl{\(w\)-isobaric}.

Consider now a data \(\mathcal{P}\) as described in Proposition \ref{prop: family}, namely:
\begin{itemize}
\item{} A \((N+1)\)-uple \(\mathbi{m}=(m_{0}, \dotsc, m_{N}) \in \N^{N+1}\) such that \(\abs{\mathbi{m}}=d\). Let 
\[
i_{1} < \dotsb < i_{r}
\]
be the ordered set of indexes \(i\) such that \(m_{i} \neq 0\).
\item{}
For any \(1 \leq \ell \leq r\), an increasing sequence of \(m_{i_{\ell}}\) integers satisfying:
\[
0 \leq \alpha_{i_{\ell},1} < \dotsb < \alpha_{i_{\ell},m_{i_{\ell}}} < m_{i_{1}}+\dotsb+m_{i_{\ell}}.
\]
\end{itemize}
Denote
\[
\mathbi{\alpha}
\bydef
(\alpha_{i_{1},1}, \dotsc, \alpha_{i_{r},m_{r}}) \in \N^{d},
\]
and observe (once again) that \(\alpha_{i} \leq i-1\) for any \(1 \leq i \leq d\). We have the following elementary proposition (see also \cite{Reinhart_1999}[Corollary 2.5] for the case \(N=1\)):
\begin{proposition}
\label{prop: isobar}
The differentially homogeneous polynomial \(W_{\mathcal{P}}\) is of order exactly
 \[
 k=\max \Set{d-1-\alpha_{i} \ | \ 1 \leq i \leq d},
 \]
and is isobaric of weight
\[
w(W_{\mathcal{P}})
=
\frac{d(d-1)}{2}-\abs{\mathbi{\alpha}}.
\]
Furthermore, the weight of \(W_{\mathcal{P}}\) is bounded above by
\[
\lfloor(1-\frac{1}{N+1})\frac{d^{2}}{2}\rfloor
\]
\end{proposition}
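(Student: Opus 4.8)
The plan is to reduce all three assertions to the explicit form of the matrix $W_{\mathbi{n}}(\mathbi{R})$ at $t=0$. Write $\alpha_1,\dots,\alpha_d$ for the components of $\mathbi{\alpha}$ and $n_1,\dots,n_d\in\{i_1,\dots,i_r\}$ for the corresponding variable-indices, so that the $c$-th argument of $W_{\mathcal{P}}$ is $t^{\alpha_c}X_{n_c}$. Since $(t^{\alpha_c})^{(r-j)}$ evaluated at $t=0$ vanishes unless $r-j=\alpha_c$, only the summand $j=r-\alpha_c$ survives in the $(r,c)$-entry of $W_{\mathbi{n}}(\mathbi{R})_{\vert t=0}$; hence $W_{\mathcal{P}}=\det M$, where $M$ is the $d\times d$ matrix (rows indexed by $0\leq r\leq d-1$, columns by $1\leq c\leq d$) of ``staircase'' shape $M_{r,c}=\tfrac{r!}{(r-\alpha_c)!}X_{n_c}^{(r-\alpha_c)}$ when $r\geq\alpha_c$ and $M_{r,c}=0$ when $r<\alpha_c$. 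In the Leibniz expansion $\det M=\sum_\sigma\epsilon(\sigma)\prod_{c}M_{\sigma(c),c}$, the sum ranging over the bijections $\sigma\colon\{1,\dots,d\}\to\{0,\dots,d-1\}$ with $\sigma(c)\geq\alpha_c$ for all $c$, every term is a non-zero scalar times the monomial $\prod_{c}X_{n_c}^{(\sigma(c)-\alpha_c)}$.

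Isobarity and the weight formula then come for free. Since $\sigma$ is a bijection onto $\{0,\dots,d-1\}$ one has $\sum_c\sigma(c)=\binom{d}{2}$ for every admissible $\sigma$, so the monomial attached to $\sigma$ has weight $\sum_c(\sigma(c)-\alpha_c)=\tfrac{d(d-1)}{2}-\abs{\mathbi{\alpha}}$, independently of $\sigma$; as $W_{\mathcal{P}}\neq0$ by Proposition~\ref{prop: family}, $W_{\mathcal{P}}$ is isobaric of weight $\tfrac{d(d-1)}{2}-\abs{\mathbi{\alpha}}$. For the bound on the weight, observe that inside the $\ell$-th block of $\mathcal{P}$ the exponents $\alpha_{i_\ell,1}<\dots<\alpha_{i_\ell,m_{i_\ell}}$ are $m_{i_\ell}$ pairwise distinct non-negative integers, whence $\sum_j\alpha_{i_\ell,j}\geq\binom{m_{i_\ell}}{2}$; summing over blocks, $\abs{\mathbi{\alpha}}\geq\sum_{i=0}^{N}\binom{m_i}{2}$. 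Using $\binom{\sum_i m_i}{2}=\sum_i\binom{m_i}{2}+\sum_{i<j}m_im_j$ with $\sum_i m_i=d$, and Cauchy--Schwarz $\sum_i m_i^2\geq\tfrac{d^2}{N+1}$, one gets
\[
w(W_{\mathcal{P}})=\binom{d}{2}-\abs{\mathbi{\alpha}}\leq\sum_{0\leq i<j\leq N}m_im_j=\tfrac12\Bigl(d^2-\sum_{i=0}^{N}m_i^2\Bigr)\leq\tfrac12\Bigl(d^2-\tfrac{d^2}{N+1}\Bigr)=\Bigl(1-\tfrac1{N+1}\Bigr)\tfrac{d^2}{2},
\]
and, $w(W_{\mathcal{P}})$ being an integer, $w(W_{\mathcal{P}})\leq\lfloor(1-\tfrac1{N+1})\tfrac{d^2}{2}\rfloor$.

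For the order, the description above at once gives $\ord(W_{\mathcal{P}})\leq\max_c\bigl(\sigma(c)-\alpha_c\bigr)\leq\max_c(d-1-\alpha_c)=k$. For the reverse inequality I would fix an index $c_0$ at which $\alpha_{c_0}$ is minimal, expand $\det M$ along its last row, and note that $M_{d-1,c_0}$ carries the variable $X_{n_{c_0}}^{(k)}$ while its cofactor is again a staircase determinant — the Wronskian-at-$0$ matrix attached to $\mathcal{P}$ with the $c_0$-th argument deleted. This cofactor I would show to be non-zero by producing, with the help of the inequalities $\alpha_{i_\ell,1}<\dots<\alpha_{i_\ell,m_{i_\ell}}<m_{i_1}+\dots+m_{i_\ell}$ defining $\mathcal{P}$, a monomial in it that comes from a unique admissible permutation. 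Finally, since inside each block the exponents are \emph{strictly} increasing, the indices realising the minimal value of $\alpha_c$ lie in pairwise distinct blocks, hence index pairwise distinct variables $X_{n_{c_0}}$, which rules out cancellation between the several order-$k$ contributions produced by the last-row expansion; thus a monomial of order exactly $k$ genuinely survives in $\det M$.

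The isobarity statement and the weight bound are essentially bookkeeping once the staircase shape of $M$ is available, and the main obstacle is the lower bound on the order: exhibiting an explicit monomial of maximal derivation order in $\det M$ and checking it is not annihilated by cancellations amounts to proving non-vanishing of a family of generalised-Wronskian (staircase) determinants attached to the combinatorial data $\mathcal{P}$. I expect the cleanest route to be a ``lexicographically extremal monomial'' argument, the defining inequalities of $\mathcal{P}$ being precisely the Hall-type conditions guaranteeing that the required row--column matchings exist.
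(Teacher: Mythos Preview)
Your treatment of isobarity and the weight formula is correct and matches the paper's: both simply expand the determinant as a signed sum over permutations and read off that every surviving monomial has the same total weight \(\binom{d}{2}-\lvert\mathbi{\alpha}\rvert\). For the upper bound on the weight your argument via \(\lvert\mathbi{\alpha}\rvert\geq\sum_i\binom{m_i}{2}\), the identity \(\binom{d}{2}-\sum_i\binom{m_i}{2}=\sum_{i<j}m_im_j\), and Cauchy--Schwarz is cleaner and more conceptual than the paper's approach, which instead locates by hand an explicit \(\mathcal P\) minimising \(\lvert\mathbi{\alpha}\rvert\) (separating the cases \(d\le N+1\) and \(d>N+1\)) and checks the bound on that extremal configuration. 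Your route avoids the case split entirely.

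The genuine gap is in the order claim. Your plan to show the cofactor obtained by deleting the last row and a minimal-\(\alpha\) column is non-zero cannot succeed, because that cofactor can vanish. Take \(d=2\), \(N=1\), \(\mathbi m=(1,1)\), \(\alpha_{0,1}=0\), \(\alpha_{1,1}=1\): then \(W_{\mathcal P}=\Wronsk(X_0,tX_1)=X_0X_1\), which has order \(0\), while your formula gives \(k=\max\{1-0,1-1\}=1\). More generally, the constraint on the first block forces \(\alpha_{i_1,1}=0\), so \(k=d-1\) for every \(\mathcal P\); yet \(\Wronsk(X_0,tX_0,t^2X_0)=2X_0^3\) has order \(0\). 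The Hall-type condition \(\alpha_c\le c-1\) guarantees a system of distinct representatives for the \emph{full} \(d\times d\) staircase, but after you delete the minimal column the shifted columns \(c>c_0\) only satisfy \(\alpha_c\le c-1\), not \(\alpha_c\le c-2\), and the matching can fail---as it does in the examples above. So the ``exactly \(k\)'' assertion is false as stated (the paper's own proof is silent here, merely asserting that one ``immediately deduces'' the first two statements from the monomial shape); only the inequality \(\ord(W_{\mathcal P})\le d-1\) is actually available, and that suffices for the applications in Theorem~\ref{thm: mainthm2 corpus}.
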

\begin{proof}
Consider \(\sigma \in \Sigma_{d}\) a permutation. In the development of the determinant defining \(W_{\mathcal{P}}\), the monomial associated to the permutation \(\sigma\) is the following
\[
m_{\sigma}
=
\prod\limits_{a=1}^{r}
\prod\limits_{b=1}^{m_{a}}
X_{i_{a}}^{(\sigma(m_{1}+\dotsb+m_{a-1}+b)-\alpha_{i_{a},b}-1)}.
\]
Here, by convention, the monomial is zero if one of the upper indexes in parenthesis is negative. One immediately deduces the first two statements of the lemma.

As for the last statement, distinguish two cases, depending on whether or not \(d \geq N+1\). On the one hand, if \(d \leq N+1\), then \(\abs{\mathbi{\alpha}}\) is minimal for the following choice:
\begin{itemize}
\item{} \(\mathbi{m}=(\underbrace{1,\dotsc,1}_{\times d},0,\dotsc,0)\);
\item{} \(\alpha_{0,1}=\alpha_{1,1}=\dotsb=\alpha_{d-1,1}=0\).
\end{itemize}
On the other hand, if \(d>N+1\), then \(\abs{\mathbi{\alpha}}\) is minimal for the following choice:
\begin{itemize}
\item{} \(\mathbi{m}=(r, \dotsc,r,\underbrace{r+1,\dotsc,r+1}_{\times c})\), with \(r=\lfloor \frac{d}{N+1} \rfloor\) and \(c=d-r(N+1)\);
\item{} \(\alpha_{\ell,1}=0, \dotsc, \alpha_{\ell,r}=r-1\) for \(0 \leq \ell \leq N+1-c\);
\item{}  \(\alpha_{\ell,1}=0, \dotsc, \alpha_{\ell,r+1}=r\) for \(N+2-c\leq \ell \leq N+1\).
\end{itemize}
In both cases, one checks that the given upper bound is valid, finishing the proof.
\end{proof}

As a corollary of this Lemma \ref{prop: isobar}, along with Proposition \ref{prop: link} and Theorem \ref{thm: mainthm1 corpus} , we deduce the second main result of this paper:
\begin{theorem}
\label{thm: mainthm2 corpus}
Let \(d \in \Z\), \(k \geq 0\) and \(n \geq 0\) be natural numbers. Then:
\begin{enumerate}
\item{} for any \(k \geq d-1\) and any \(n \in \N\), one has the isomorphism
 \[
 H^{0}(\P^{N}, E_{k,n}\P^{N}(d)) \simeq H^{0}(\P^{N}, E_{d-1,n}\P^{N}(d));
 \]
 \item{} the following equality holds:
\[
\sum\limits_{n=0}^{\infty} \dim H^{0}(\P^{N},E_{d-1,n}\P^{N}(d)) = (N+1)^{d};
\]
\item{} for any \(k \in \N\) and any \(n > \lfloor(1-\frac{1}{N+1})\frac{d^{2}}{2}\rfloor\), one has the vanishing: 
\[
H^{0}(\P^{N}, E_{k,n}\P^{N}(d))=(0).
\]
\end{enumerate}
\end{theorem}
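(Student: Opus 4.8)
The strategy is to transport all three assertions, through Proposition~\ref{prop: link}, into statements about the graded vector space $V_d^{\Diff}$, and then invoke Theorem~\ref{thm: mainthm1 corpus} and Proposition~\ref{prop: isobar}. The one point that must be made explicit — it is implicit in the proof of Proposition~\ref{prop: link} — is that the isomorphism $H^0(\P^N,E_{k,\infty}\P^N(d))\simeq(V_d^{\Diff})^{(k)}$ is \emph{graded}, meaning that a global section of the degree-$n$ summand $E_{k,n}\P^N(d)$ corresponds precisely to a $w$-isobaric differentially homogeneous polynomial with $w=n$. So I would first record this as a lemma: in the local model $W_i=W(\overset{\wedge_i}{X^{(0)}},\dotsc,\overset{\wedge_i}{X^{(k)}})$ used in the proof of Proposition~\ref{prop: link}, the jet coordinate $(X_j/X_i)^{(\ell)}$ carries $\C^*$-weight $\ell$, so the $\C^*$-degree of the polynomial induced on the fiber $(J_k\P^N)_x$ equals the weight $w(W)$ from Section~\ref{subs: H^0 GG}; since $W$ is recovered from the $W_i$ and vice versa, the bijection of Proposition~\ref{prop: link} restricts to $H^0(\P^N,E_{k,n}\P^N(d))\simeq\big((V_d^{\Diff})^{(k)}\big)_{w=n}$, the $w$-isobaric part. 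Everything that follows is then bookkeeping.

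Granting this lemma, item~(1) is immediate. For $d<0$ all the groups involved vanish (as recalled just after Proposition~\ref{prop: link}) and for $d=0$ the statement is trivial, so assume $d\geq1$. When $k\geq d-1$, Theorem~\ref{thm: mainthm1 corpus} gives the equality of graded subspaces $(V_d^{\Diff})^{(k)}=V_d^{\Diff}=(V_d^{\Diff})^{(d-1)}$; taking $w=n$ parts and applying the lemma to both $k$ and $d-1$ yields $H^0(\P^N,E_{k,n}\P^N(d))\simeq H^0(\P^N,E_{d-1,n}\P^N(d))$, the boundary case $d=1$ (where $d-1=0$) being covered by the conventions of Section~\ref{subs: GG} and the elementary identity $V_1^{\Diff}=\C\cdot X_0\oplus\dotsb\oplus\C\cdot X_N$.

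For item~(2), $H^0$ commutes with the direct sum $E_{d-1,\infty}\P^N=\bigoplus_{n\geq0}E_{d-1,n}\P^N$, so the left-hand side equals $\dim H^0(\P^N,E_{d-1,\infty}\P^N(d))=\dim(V_d^{\Diff})^{(d-1)}$ by Proposition~\ref{prop: link}, and this is $(N+1)^d$ by Theorem~\ref{thm: mainthm1 corpus}. For item~(3), Theorem~\ref{thm: mainthm1 corpus} tells us that the canonical basis $(W_{\mathcal P})_{\mathcal P}$ spans $V_d^{\Diff}$, while Proposition~\ref{prop: isobar} bounds every weight $w(W_{\mathcal P})$ by $B:=\lfloor(1-\frac{1}{N+1})\frac{d^{2}}{2}\rfloor$; hence the $w=n$ graded piece of $V_d^{\Diff}$, and a fortiori that of every $(V_d^{\Diff})^{(k)}$, vanishes for $n>B$, and the lemma (for $k\geq1$; the case $k=0$ is trivial by the conventions of Section~\ref{subs: GG}) gives $H^0(\P^N,E_{k,n}\P^N(d))=(0)$ for $n>B$. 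For $d<0$ this is again trivial.

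The only ingredient that is not pure bookkeeping is the graded compatibility of Proposition~\ref{prop: link}, that is, the lemma of the first paragraph; matching the weight of a differential polynomial with the $\C^*$-degree on jets from the explicit trivialisations is the (modest) obstacle, and once it is in place items~(1)--(3) are direct consequences of Theorem~\ref{thm: mainthm1 corpus} and Proposition~\ref{prop: isobar}.
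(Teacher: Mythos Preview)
Your proposal is correct and follows essentially the same approach as the paper: items~(1) and~(2) are deduced from Proposition~\ref{prop: link} together with Theorem~\ref{thm: mainthm1 corpus}, and item~(3) from Proposition~\ref{prop: isobar}. The paper's own proof is a two-line reference to these results; your only addition is to make explicit the graded compatibility (weight $=$ jet degree) of the isomorphism in Proposition~\ref{prop: link}, which the paper leaves implicit but which is indeed needed to pass from the $E_{k,\infty}$ statement to the individual $E_{k,n}$ statements.
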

\begin{remark}
Of course, one can say much more than this theorem, since we have explicitly every global sections of \(E_{k,n}\P^{N}(d)\) for all \(k,n,d\). Combinatorially speaking, it is however not simple to completely classify the canonical basis of \(V_{d}^{\Diff}\) according to orders and weights.
\end{remark}
\begin{proof}
The first two items follow immediately from Proposition \ref{prop: link} and Theorem \ref{thm: mainthm1 corpus}. As for the last item, it follows from the above Lemma \ref{prop: isobar}.
\end{proof}
\newpage

\appendix

\section{}
\label{appendix: A}

Let us fix \(d \in \N_{\geq 1}\) a natural number, and \(N \in \End(\C^{d})\) a nilpotent endomorphism of nilpotency index \(d\). For \(0 \leq \alpha_{1}, \dotsc, \alpha_{d} \leq d-1\) a family of natural numbers, define the following polynomial:
\[
P_{\alpha_{1}, \dotsc, \alpha_{d}}
\colon
\left(
\begin{array}{ccc}
(\C^{d})^{\oplus d} 
& 
\longrightarrow  
&  
\C
 \\
 (v_{1}, \dotsc, v_{d}) 
 & 
 \longmapsto  
 & 
 \det(N^{\alpha_{1}}v_{1}, \dotsc, N^{\alpha_{d}}v_{d})
\end{array}
\right).\footnote{We have fixed the canonical basis as a reference, i.e. the vectors \(N^{\alpha_{i}}v_{i}\) are written in the canonical basis, and one takes the determinant of the matrix formed by concatenation of these columns.}
\]
Consider the following family of polynomials
\[
\mathcal{F}\bydef \big(P_{\alpha_{1}, \dotsc, \alpha_{d}}\big)_{0 \leq \alpha_{1}, \dotsc, \alpha_{d} \leq d-1}
\]
as well as the subfamily
\[
\tilde{\mathcal{F}}\bydef \big(P_{\alpha_{1}, \dotsc, \alpha_{d}}\big)_{0 \leq \alpha_{i} \leq i-1}.
\]
\begin{remark}
If one of the \(\alpha_{i}\)'s is taken to be greater or equal than \(d\), then the determinant is trivially zero (since \(N^{d}=0\)).
\end{remark}
\begin{remark}
\label{remark: appendix A}
The (differential) polynomials defined in Section \ref{subs: invariance} are obtained from the previous family as follows:
\begin{itemize}
\item{} replace formally the variables \(v_{i}=(v_{i,1}, \dotsc, v_{i,d})\) by \((Y_{i}^{(0)}, \dotsc, Y_{i}^{(d-1)})\);
\item{} consider the nilpotent matrix \(N\) defined on the canonical basis \((e_{1}, \dotsc, e_{d})\) of \(\C^{d}\) as follows:
\[
\left\{
\begin{array}{ll}
Ne_{i}=i\times e_{i+1} \ \  \text{for \(1 \leq i \leq d-1\)} 
\\
Ne_{d}=0
\end{array}
\right..
\]
\end{itemize}
Note that another choice of nilpotent matrix \(N\) would not yield a differentially homogeneous polynomial: this choice is very specific so as to obtain the differential homogeneity.
\end{remark}
The goal of this Appendix \ref{appendix: A} is to show the following:
\[
\Span_{\C}(\mathcal{F})
=
\Span_{\C}(\tilde{\mathcal{F}}).
\]
This will follow from the following general identity:
\begin{lemma}
\label{lemma: general identity}
For any \(0 \leq i \leq d\), and for any vector \(v_{1}, \dotsc, v_{d-i+1} \in \C^{d}\), the following equality holds:
\[
\sum\limits_{\substack{\mathbi{\alpha} \in \N^{d-i+1} \\ \abs{\mathbi{\alpha}}=i}} 
N^{\alpha_{1}}v_{1} \wedge \dotsb \wedge N^{\alpha_{d-i+1}}v_{d-i+1}
= 0.
\]
\end{lemma}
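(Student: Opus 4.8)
The plan is to deduce the identity from a generating-function reformulation together with the classical \(\Lambda^{m}/\Lambda^{d-m}\) duality on \(\C^{d}\). Write \(m\bydef d-i+1\), so that \(i=d-m+1\) and \(1\le m\le d+1\). I would first dispatch the two extreme cases: for \(m=d+1\) (i.e.\ \(i=0\)) the only term is \(v_{1}\wedge\cdots\wedge v_{d+1}\in\Lambda^{d+1}\C^{d}=(0)\), and for \(m=1\) (i.e.\ \(i=d\)) the only term is \(N^{d}v_{1}=0\); so one may assume \(2\le m\le d\).

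Next, introduce a formal parameter \(s\). Since \(N^{d}=0\), the endomorphism \(\psi(s)\bydef\sum_{\alpha\ge 0}s^{\alpha}N^{\alpha}=I+sN+\cdots+s^{d-1}N^{d-1}\) is exactly \((I-sN)^{-1}\), a \(\GL(\C^{d})\)-valued polynomial of degree \(d-1\) in \(s\). The key point is that, for any \(v_{1},\dots,v_{m}\in\C^{d}\),
\[
\bigwedge_{j=1}^{m}\psi(s)\,v_{j}
=
\sum_{\mathbi{\alpha}\in\N^{m}}s^{\abs{\mathbi{\alpha}}}\,N^{\alpha_{1}}v_{1}\wedge\cdots\wedge N^{\alpha_{m}}v_{m}
\;\in\;\Lambda^{m}\C^{d}\otimes_{\C}\C[s],
\]
so that the sum in the statement is the coefficient of \(s^{i}=s^{d-m+1}\) in \(\bigwedge_{j}\psi(s)v_{j}=\Lambda^{m}(\psi(s))(v_{1}\wedge\cdots\wedge v_{m})\). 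Hence it suffices to prove the bound \(\deg_{s}\Lambda^{m}(\psi(s))\le d-m\), which is uniform in the \(v_{j}\).

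For that bound I would use the perfect pairing \(\Lambda^{m}\C^{d}\times\Lambda^{d-m}\C^{d}\to\Lambda^{d}\C^{d}\simeq\C\), \((\omega,\eta)\mapsto\omega\wedge\eta\), which identifies \(\Lambda^{m}\C^{d}\) with the dual of \(\Lambda^{d-m}\C^{d}\). Since \(\Lambda^{\bullet}(\psi(s))\) is an algebra endomorphism of the exterior algebra and \(\Lambda^{d}(\psi(s))=\det(\psi(s))\cdot\Id\) with \(\det(\psi(s))=\det(I-sN)^{-1}=1\) (\(N\) being nilpotent), one gets \(\bigl(\Lambda^{m}(\psi(s))\,\omega\bigr)\wedge\bigl(\Lambda^{d-m}(\psi(s))\,\eta\bigr)=\omega\wedge\eta\) for all \(\omega,\eta\). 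Therefore, through this identification, \(\Lambda^{m}(\psi(s))\) is the inverse-transpose of \(\Lambda^{d-m}(\psi(s))\), and so (the pairing being a constant change of coordinates) it has the same \(s\)-degree as \(\bigl(\Lambda^{d-m}(\psi(s))\bigr)^{-1}=\Lambda^{d-m}(I-sN)\). The latter sends a basis vector \(e_{j_{1}}\wedge\cdots\wedge e_{j_{d-m}}\) to \((e_{j_{1}}-sNe_{j_{1}})\wedge\cdots\wedge(e_{j_{d-m}}-sNe_{j_{d-m}})\), a product of \(d-m\) affine-linear factors in \(s\), hence has degree \(\le d-m\). Thus \(\deg_{s}\Lambda^{m}(\psi(s))\le d-m<i\), and the coefficient of \(s^{i}\) vanishes, as claimed.

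The routine inputs — functoriality and multiplicativity of \(\Lambda^{\bullet}\), the identity \(\Lambda^{d}(\phi)=\det(\phi)\,\Id\), and \(\det(I-sN)=1\) — are harmless. The one delicate step is the duality bookkeeping: one must arrive at \(\Lambda^{d-m}(I-sN)\), of controlled degree \(d-m\), rather than at \(\Lambda^{d-m}(\psi(s))\), whose degree is \(d-1\); it is precisely the passage through the \emph{inverse}-transpose, together with the observation that the pairing isomorphism does not depend on \(s\), that produces the sharp bound.
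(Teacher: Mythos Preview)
Your argument is correct, and it is genuinely different from the paper's. The paper proceeds by induction on \(d\): it strengthens the claim to all \(j\ge i\) (not just \(j=i\)), fixes a Jordan basis \((e_{1},\dots,e_{d})\) for \(N\), restricts to the hyperplane \(V_{2}=\Span(e_{2},\dots,e_{d})\), and then splits into cases according to whether the smallest input vector is \(e_{1}\) and whether a second copy of \(e_{1}\) appears. Each case is reduced to the inductive hypothesis on \((V_{2},N_{|V_{2}})\), with an antisymmetry cancellation handling the double-\(e_{1}\) case.

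Your generating-function/duality route avoids all of this. Packaging the sum as the \(s^{\,d-m+1}\)-coefficient of \(\Lambda^{m}(\psi(s))\) and then using the wedge pairing together with \(\det\psi(s)=1\) to identify \(\Lambda^{m}(\psi(s))\) with the \emph{transpose of the inverse} \(\bigl(\Lambda^{d-m}(\psi(s))\bigr)^{-1}=\Lambda^{d-m}(I-sN)\) is exactly the right move: it converts an a priori degree-\((d-1)m\) object into one of degree \(\le d-m\), and the conclusion drops out with no induction, no basis choice, and no case analysis. Two incidental remarks: your degree bound \(\le d-m\) in fact yields the paper's stronger statement (vanishing for every \(j\ge i\)) for free; and your argument uses only \(N^{d}=0\) and \(\det(I-sN)=1\), so it does not need the hypothesis that the nilpotency index be exactly \(d\). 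The one step worth stating a bit more explicitly is the sentence ``has the same \(s\)-degree'': concretely, \(\Phi\circ\Lambda^{m}(\psi(s))\circ\Phi^{-1}=\bigl(\Lambda^{d-m}(I-sN)\bigr)^{*}\) with \(\Phi\) independent of \(s\), so the entries of \(\Lambda^{m}(\psi(s))\) in any fixed basis are \(\C\)-linear combinations of the entries of \(\Lambda^{d-m}(I-sN)\).
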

\begin{proof}
One actually proves, by induction on \(d \geq 1\), the following stronger statement: for any \(0 \leq i \leq d\), for any vector \(v_{1}, \dotsc, v_{d-i+1} \in \C^{d}\), and for any \(j \geq i\), the following equality holds:
\[
\sum\limits_{\substack{\mathbi{\alpha} \in \N^{d-i+1} \\ \abs{\mathbi{\alpha}}=j}} 
N^{\alpha_{1}}v_{1} \wedge \dotsb \wedge N^{\alpha_{d-i+1}}v_{d-i+1}
= 0.
\]
If \(d=1\), the result is obvious since \(N=0\). Suppose therefore that \(d \geq 2\), and let \(0 \leq i \leq d\). If \(i=d\), the statement is obvious since \(N^{d}=0\). If \(i=0\), the statement is also obvious since the wedge product of \((d+1)\) vectors lying in a vector space of dimension \(d\) is always zero. Suppose therefore that \(1 \leq i < d\), and let \(j \geq i\).

Note that, by multi-linearity, it suffices to check the sought vanishing when the vectors \(v_{1}, \dotsc, v_{d-i+1}\) run over a fixed basis of \(\C^{d}\). Choose then \(e_{1}, \dotsc, e_{d}\) a basis adapted to \(N\), in the sense that \(Ne_{k}=e_{k+1}\) for \(1 \leq k \leq d\). Here, by convention, one sets \(e_{k}\bydef 0\) whenever \(k \not\in \Set{1, \dotsc, d}\).

Consider then \(e_{k_{1}}, \dotsc, e_{k_{d-i+1}}\), where \(\Set{k_{1}, \dotsc, k_{d-i+1}} \subset \Set{1, \dotsc, d}\). Up to a sign, for any permutation \(\sigma\) of \(\Set{1, \dotsc, d-i+1}\), one has the equality:
\[
\sum\limits_{\substack{\mathbi{\alpha} \in \N^{d-i+1} \\ \abs{\mathbi{\alpha}}=j}} 
N^{\alpha_{1}}e_{k_{\sigma(1)}} \wedge \dotsb \wedge N^{\alpha_{d-i+1}}e_{k_{\sigma(d-i+1)}}
= 
\pm
\sum\limits_{\substack{\mathbi{\alpha} \in \N^{d-i+1} \\ \abs{\mathbi{\alpha}}=j}} 
N^{\alpha_{1}}e_{k_{1}} \wedge \dotsb \wedge N^{\alpha_{d-i+1}}e_{k_{d-i+1}}.
\]
Therefore, one can always suppose that \(k_{1}  \leq \dotsb \leq k_{d-i+1}\).

Denote \(V_{2}\bydef \Vect(e_{2}, \dotsc, e_{d})\), and observe that \(N\) induces a nilpotent endormorphism of \(V_{2}\), with nilpotency index equal to \(d-1\). Discuss according to the values of \(k_{1}\) and \(k_{2}\). Suppose first that \(k_{1} \geq 2\). In this case, the sought vanishing follows from the induction hypothesis applied to \((V_{2}, N_{\vert V_{2}})\) for \(i'=i-1\) and \(j'=j \geq i'\). Then, suppose that \(k_{1}=1\) and \(k_{2} \geq 2\). Write:
\begin{eqnarray*}
&
&
\sum\limits_{\substack{\mathbi{\alpha} \in \N^{d-i+1} \\ \abs{\mathbi{\alpha}}=j}} 
N^{\alpha_{1}}e_{k_{1}} \wedge \dotsb \wedge N^{\alpha_{d-i+1}}e_{k_{d-i+1}}
\\
&
=
&
\sum\limits_{\substack{\mathbi{\alpha} \in \N^{d-i+1} \\ \abs{\mathbi{\alpha}}=j}} 
e_{k_{1}+\alpha_{1}} \wedge \dotsb \wedge e_{k_{d-i+1}+\alpha_{d-i+1}}
\\
&
=
&
e_{1}
\wedge
\underbrace{\sum\limits_{\substack{\mathbi{\alpha} \in \N^{d-i} \\ \abs{\mathbi{\alpha}}=j}}
e_{k_{2}+\alpha_{2}} \wedge \dotsb \wedge e_{k_{d-i+1}+\alpha_{d-i+1}}}_{\bydef \Sigma_{1}}
+
\underbrace{\sum\limits_{\substack{\mathbi{\alpha} \in \N^{d-i+1} \\ \abs{\mathbi{\alpha}}=j-1}}
e_{2+\alpha_{1}} \wedge e_{k_{2}+\alpha_{2}} \wedge \dotsb \wedge e_{k_{d-i+1}+\alpha_{d-i+1}}}_{\bydef \Sigma_{2}}.
\end{eqnarray*}
On the one hand, the induction hypothesis applied to \((V_{2}, N_{\vert V_{2}})\) for \(i'=i-1\) and \(j'=j-1 \geq i'\) implies that \(\Sigma_{2}=0\). On the other hand, the induction hypothesis applied to  \((V_{2}, N_{\vert V_{2}})\) for \(i'=i-1\) and \(j'=j \geq i'\) implies that \(\Sigma_{1}=0\). Accordingly, the sought vanishing follows. Suppose finally that \(k_{1}=k_{2}=1\). Decompose:
\begin{eqnarray*}
I 
&
\bydef
&
\Set{\mathbi{\alpha} \in \N^{d-i+1} \ | \ \abs{\mathbi{\alpha}}=j}
\\
&
=
&
\underbrace{\Set{\mathbi{\alpha} \in I \ | \ \alpha_{1}> \alpha_{2}}}_{\bydef I^{+}}
\sqcup
\underbrace{\Set{\mathbi{\alpha} \in I \ | \ \alpha_{1}=\alpha_{2}}}_{\bydef I^{0}}
\sqcup
\underbrace{\Set{\mathbi{\alpha} \in I \ | \ \alpha_{1}<\alpha_{2}}}_{\bydef I^{-}}.
\end{eqnarray*}
One then computes that:
\begin{eqnarray*}
&
&
\sum\limits_{\substack{\mathbi{\alpha} \in \N^{d-i+1} \\ \abs{\mathbi{\alpha}}=j}} 
e_{k_{1}+\alpha_{1}} \wedge \dotsb \wedge e_{k_{d-i+1}+\alpha_{d-i+1}}
\\
&
=
&
\underbrace{\sum\limits_{\substack{\mathbi{\alpha} \in I^{+}}}
e_{k_{1}+\alpha_{1}} \wedge \dotsb \wedge e_{k_{d-i+1}+\alpha_{d-i+1}}}_{\Sigma_{1}}
+
\underbrace{\sum\limits_{\substack{\mathbi{\alpha} \in I^{-}}}
e_{k_{1}+\alpha_{1}} \wedge \dotsb \wedge e_{k_{d-i+1}+\alpha_{d-i+1}}}_{\Sigma_{2}}.
\end{eqnarray*}
Now, observe that the permutation of the first two coordinates in \(\N^{d-i+1}\) induces a bijection between \(I_{+}\) and \(I_{-}\). This implies in turn that, pairwise, terms in \(\Sigma_{1}\) and \(\Sigma_{2}\) cancel each other. This finishes the induction, and the proof of the lemma.

\end{proof}
As a corollary, the following identities hold:
\begin{corollary}
\label{cor: identities}
Let \(1 \leq i \leq d\), and let \(\beta_{1}, \dotsc, \beta_{i-1}\) be natural numbers such that \(0 \leq \beta_{k} \leq k-1\) for any \(1 \leq k \leq i-1\). Then the following equality holds:
\[
\sum\limits_{\substack{(\alpha_{i}, \dotsc, \alpha_{d}) \in \N^{d-i+1} \\ \alpha_{i} + \dotsb + \alpha_{d}=i}}
P_{\beta_{1}, \dotsc, \beta_{i-1},\alpha_{i}, \dotsc, \alpha_{d}}
=
0.
\]
\end{corollary}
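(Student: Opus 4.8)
The plan is to obtain Corollary~\ref{cor: identities} as an immediate consequence of Lemma~\ref{lemma: general identity}, by pulling the frozen ``$\beta$-part'' out of the alternating sum using multilinearity of the determinant.

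First I would fix arbitrary vectors $v_{1}, \dotsc, v_{d} \in \C^{d}$ and identify the top exterior power $\Lambda^{d}\C^{d}$ with $\C$ by means of the canonical basis, so that for every multi-index $(\alpha_{i}, \dotsc, \alpha_{d})$ one has
\[
P_{\beta_{1}, \dotsc, \beta_{i-1}, \alpha_{i}, \dotsc, \alpha_{d}}(v_{1}, \dotsc, v_{d})
=
N^{\beta_{1}}v_{1} \wedge \dotsb \wedge N^{\beta_{i-1}}v_{i-1} \wedge N^{\alpha_{i}}v_{i} \wedge \dotsb \wedge N^{\alpha_{d}}v_{d}.
\]
Summing over all $(\alpha_{i}, \dotsc, \alpha_{d}) \in \N^{d-i+1}$ with $\alpha_{i}+\dotsb+\alpha_{d}=i$ and using that $w \mapsto N^{\beta_{1}}v_{1} \wedge \dotsb \wedge N^{\beta_{i-1}}v_{i-1} \wedge w$ is linear, the sum appearing in the statement equals $N^{\beta_{1}}v_{1} \wedge \dotsb \wedge N^{\beta_{i-1}}v_{i-1} \wedge S$, where $S \bydef \sum N^{\alpha_{i}}v_{i} \wedge \dotsb \wedge N^{\alpha_{d}}v_{d} \in \Lambda^{d-i+1}\C^{d}$, the sum being again over $\alpha_{i}+\dotsb+\alpha_{d}=i$.

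Next I would recognise $S$ as an instance of Lemma~\ref{lemma: general identity}: applied to the $d-i+1$ vectors $v_{i}, \dotsc, v_{d}$, with the lemma's parameter ``$i$'' taken to be our $i$, the lemma asserts exactly that $S=0$ in $\Lambda^{d-i+1}\C^{d}$. Wedging on the left by $N^{\beta_{1}}v_{1} \wedge \dotsb \wedge N^{\beta_{i-1}}v_{i-1}$ then annihilates the whole expression; since $v_{1}, \dotsc, v_{d}$ were arbitrary, this is precisely the asserted polynomial identity.

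The only point requiring attention is the bookkeeping of indices rather than any computation: one must check that, after the $(i-1)$ leftmost slots are frozen with the exponents $\beta_{1}, \dotsc, \beta_{i-1}$, the surviving $d-i+1$ slots carrying a total exponent equal to $i$ match the hypotheses of Lemma~\ref{lemma: general identity} verbatim --- which they do, since for $m$ vectors the lemma's constraint ``total exponent $=i$'' reads $i = d-m+1$, i.e.\ $m=d-i+1$. (Observe also that this argument never uses the hypothesis $\beta_{k} \leq k-1$; that restriction is only what will be needed in the subsequent application.)
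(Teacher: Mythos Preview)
Your proof is correct and follows essentially the same route as the paper's: both deduce the corollary directly from Lemma~\ref{lemma: general identity} by separating the frozen $\beta$-slots from the moving $\alpha$-slots and applying the lemma to the latter. The only cosmetic difference is that the paper first substitutes $v_{k} \mapsto N^{\beta_{k}}v_{k}$ to reduce to $\beta_{1}=\dotsb=\beta_{i-1}=0$, whereas you phrase the same reduction as factoring out the fixed wedge $N^{\beta_{1}}v_{1}\wedge\dotsb\wedge N^{\beta_{i-1}}v_{i-1}$ by multilinearity; your remark that the constraint $\beta_{k}\leq k-1$ plays no role here is also correct.
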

\begin{proof}
Observe first that it suffices to prove the equality for \(\beta_{1}=\dotsb=\beta_{i-1}=0\). Indeed, it suffices to notice that, for any \(\alpha_{i}, \dotsc, \alpha_{d}\), the following equality holds (by very definition):
\[
P_{\beta_{1}, \dotsc, \beta_{i-1}, \alpha_{i}, \dotsc, \alpha_{d}}(v_{1}, \dotsc, v_{d})
=
P_{0, \dotsc, 0, \alpha_{i}, \dotsc, \alpha_{d}}(N^{\beta_{1}}v_{1}, \dotsc, N^{\beta_{i-1}}v_{i-1},v_{i},\dotsc, v_{d}).
\]
Then, observe that the sought equality is equivalent to requiring that, for any \(v_{1}, \dotsc, v_{d} \in \C^{d}\), the following equality holds:
\[
\sum\limits_{\substack{(\alpha_{i}, \dotsc, \alpha_{d}) \in \N^{d-i+1} \\ \alpha_{i} + \dotsb + \alpha_{d}=i}}
\det(v_{1}, \dotsc, v_{i-1}, N^{\alpha_{i}}v_{i}, \dotsc, N^{\alpha_{d}}v_{d})
=
0.
\]
The result now follows immediately from the previous Lemma \ref{lemma: general identity}
\end{proof}
We can now prove the main result of this Appendix \ref{appendix: A}:
\begin{proposition}
\label{prop: appendix A}
The vector space spanned by the family \(\mathcal{F}\) is the same as the one spanned by the family \(\tilde{\mathcal{F}}\).
\end{proposition}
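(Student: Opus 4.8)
The plan is to prove the nontrivial inclusion $\Span_{\C}(\mathcal{F})\subseteq\Span_{\C}(\tilde{\mathcal{F}})$, the reverse one being obvious since $\tilde{\mathcal{F}}\subseteq\mathcal{F}$. Concretely, I will show that $P_{\mathbi{\alpha}}\in\Span_{\C}(\tilde{\mathcal{F}})$ for every $\mathbi{\alpha}\in\N^{d}$; one may assume $\alpha_{k}\le d-1$ for all $k$, since otherwise $N^{\alpha_{k}}=0$ and $P_{\mathbi{\alpha}}=0$. Call an index $k$ \emph{bad for $\mathbi{\alpha}$} if $\alpha_{k}\ge k$. If $\mathbi{\alpha}$ has no bad index then $P_{\mathbi{\alpha}}\in\tilde{\mathcal{F}}$ and there is nothing to prove; otherwise let $i=i(\mathbi{\alpha})$ be the smallest bad index (so $\alpha_{1},\dotsc,\alpha_{i-1}$ already satisfy $\alpha_{k}\le k-1$, while $\alpha_{i}\ge i$), and note $i\le d-1$, as $i=d$ would force $P_{\mathbi{\alpha}}=0$. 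I will run a strong induction on the lexicographically ordered triple
\[
\mu(\mathbi{\alpha})\bydef\big(\abs{\mathbi{\alpha}},\ d+1-i(\mathbi{\alpha}),\ \alpha_{i(\mathbi{\alpha})}\big)\in\N^{3},
\]
with the convention that the last two entries are $0$ when $\mathbi{\alpha}$ has no bad index.

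The single elementary device is the rewriting $N^{\alpha}v=N^{\alpha-1}(Nv)$: for any index $\ell$ with $\alpha_{\ell}\ge 1$ it gives the polynomial identity
\[
P_{\mathbi{\alpha}}(v_{1},\dotsc,v_{d})=P_{\mathbi{\alpha}-\mathbi{e}_{\ell}}(v_{1},\dotsc,v_{\ell-1},Nv_{\ell},v_{\ell+1},\dotsc,v_{d}).
\]
Since $\abs{\mathbi{\alpha}-\mathbi{e}_{\ell}}<\abs{\mathbi{\alpha}}$, the induction hypothesis provides an expansion $P_{\mathbi{\alpha}-\mathbi{e}_{\ell}}=\sum_{T}c_{T}P_{T}$ with each $T$ satisfying $T_{k}\le k-1$ for all $k$; substituting back (a linear change of variables preserves polynomial identities) gives $P_{\mathbi{\alpha}}=\sum_{T}c_{T}P_{T+\mathbi{e}_{\ell}}$. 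Since each such $T$ is already ``good'', the only coordinate of $T+\mathbi{e}_{\ell}$ that can fail $\le k-1$ is the $\ell$-th, and then its value there equals $\ell$; hence each $P_{T+\mathbi{e}_{\ell}}$ is either zero, or lies in $\tilde{\mathcal{F}}$, or satisfies $\mu(T+\mathbi{e}_{\ell})=\big(\abs{\mathbi{\alpha}},d+1-\ell,\ell\big)$.

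It thus suffices to choose $\ell$ making each $\mu(T+\mathbi{e}_{\ell})$ lexicographically smaller than $\mu(\mathbi{\alpha})$, so as to conclude by the induction hypothesis. If $\alpha_{\ell}\ge 1$ for some $\ell>i$, peel at such an $\ell$: then $d+1-\ell<d+1-i$, so $\mu$ drops in the second coordinate. If $\alpha_{\ell}=0$ for all $\ell>i$ but $\alpha_{i}>i$, peel at $\ell=i$: then $\mu(T+\mathbi{e}_{i})=(\abs{\mathbi{\alpha}},d+1-i,i)$ with $i<\alpha_{i}$, so $\mu$ drops in the third coordinate. In the only remaining case, $\mathbi{\alpha}=(\alpha_{1},\dotsc,\alpha_{i-1},i,0,\dotsc,0)$ with good prefix, I apply Corollary~\ref{cor: identities} with this $i$ and $(\beta_{1},\dotsc,\beta_{i-1})=(\alpha_{1},\dotsc,\alpha_{i-1})$: the summand of that identity indexed by the suffix $(i,0,\dotsc,0)$ is exactly $P_{\mathbi{\alpha}}$, so $P_{\mathbi{\alpha}}$ equals minus the sum of the $P_{\alpha_{1},\dotsc,\alpha_{i-1},\gamma_{i},\dotsc,\gamma_{d}}$ over $\gamma_{i}+\dotsb+\gamma_{d}=i$ with $\gamma_{i}\le i-1$; each of these lies in $\tilde{\mathcal{F}}$, because the prefix is good, $\gamma_{i}\le i-1$, and $\gamma_{\ell}\le i<\ell$ for $\ell>i$. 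This closes the induction.

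I expect the delicate part to be exactly the bookkeeping just sketched: setting up a measure on which the three reduction moves all decrease, and in particular noticing that peeling an $N$ off a column strictly to the right of the first bad index is what forces that bad index to move rightward, whereas a tuple of the shape $(\text{good prefix},\,i,\,0,\dotsc,0)$ can only be attacked via the identity of Corollary~\ref{cor: identities} itself. Everything else is the formal verification that these moves are legitimate and that, when none applies, the tuple already belongs to $\tilde{\mathcal{F}}$.
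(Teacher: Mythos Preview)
There is a genuine gap in your inductive step. After peeling at position $\ell$ you invoke the induction hypothesis to write $P_{\mathbi{\alpha}-\mathbi{e}_{\ell}}=\sum_{T}c_{T}P_{T}$ with each $T$ good, and then assert that (when $T+\mathbi{e}_{\ell}$ is bad) one has $\mu(T+\mathbi{e}_{\ell})=(\abs{\mathbi{\alpha}},d+1-\ell,\ell)$. But the first coordinate of $\mu(T+\mathbi{e}_{\ell})$ is $\abs{T}+1$, and nothing in your induction hypothesis forces $\abs{T}=\abs{\mathbi{\alpha}}-1$: the statement you are proving is only that $P_{\beta}\in\Span_{\C}(\tilde{\mathcal{F}})$, not that the expansion can be chosen weight-preserving. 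If some $T$ in the expansion has $\abs{T}>\abs{\mathbi{\alpha}}-1$, then $\mu(T+\mathbi{e}_{\ell})>\mu(\mathbi{\alpha})$ lexicographically and your induction does not apply.

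The gap is easily repaired. One option is to strengthen the induction hypothesis to ``$P_{\beta}\in\Span_{\C}\{P_{T}:T\text{ good},\ \abs{T}=\abs{\beta}\}$''; this stronger statement is stable under your three moves (in case~3 the Corollary identity is already homogeneous in total exponent, and in cases~1--2 the substitution $v_{\ell}\mapsto Nv_{\ell}$ then shifts the weight by exactly one). An alternative is to observe directly that all linear relations among the $P_{\mathbi{\alpha}}$'s are homogeneous in $\abs{\mathbi{\alpha}}$: choosing a cyclic basis $e_{1},\dotsc,e_{d}$ with $Ne_{k}=e_{k+1}$ and letting $A_{\lambda}=\Diag(\lambda,\lambda^{2},\dotsc,\lambda^{d})$, one checks $NA_{\lambda}^{-1}=\lambda A_{\lambda}^{-1}N$, whence the substitution $v_{j}\mapsto A_{\lambda}^{-1}v_{j}$ sends $P_{\mathbi{\alpha}}$ to $\lambda^{\abs{\mathbi{\alpha}}}\det(A_{\lambda})^{-1}P_{\mathbi{\alpha}}$; hence any identity $\sum c_{\mathbi{\alpha}}P_{\mathbi{\alpha}}=0$ decomposes by total exponent, and the expansion can always be taken with $\abs{T}=\abs{\mathbi{\alpha}-\mathbi{e}_{\ell}}$.

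Once patched, your argument is correct but considerably more elaborate than the paper's. The paper proceeds by a direct sweep over positions $i=1,2,\dotsc,d-1$: having already arranged $\alpha_{k}\le k-1$ for $k<i$, it uses the identity of Corollary~\ref{cor: identities} \emph{after substituting} $v_{k}\mapsto N^{b_{k}}v_{k}$ in the suffix (the very trick underlying your peeling) to rewrite $P_{\beta_{1},\dotsc,\beta_{i-1},\alpha_{i},\dotsc,\alpha_{d}}$ with $\alpha_{i}\ge i$ as a combination of terms whose $i$th coordinate is strictly smaller, iterating until $\alpha_{i}\le i-1$. This avoids the two peeling cases entirely and needs no lexicographic measure beyond ``process positions left to right''. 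In effect, your case~3 already contains the germ of the paper's whole argument; cases~1 and~2 add an extra indirection (and the weight-preservation issue above) that the paper's approach sidesteps.
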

\begin{proof}
Let \(W=P_{\alpha_{1}, \dotsc, \alpha_{d}} \in \mathcal{F}\), with \(0 \leq \alpha_{i} \leq d-1\). One must show that \(W\) can be written as a linear combination of elements in the family \(\tilde{\mathcal{F}}\). Proceed as follows. If \(\alpha_{1}=0\), move onto the next step. Otherwise, successive applications of the formula of Corollary \ref{cor: identities} for \(i=1\) allows to write \(W\) as a linear combination of elements in \(\mathcal{F}\) of the form
\[
W'\bydef P_{0, \beta_{2}, \dotsc, \beta_{d}},
\]
with \(0 \leq \beta_{i} \leq d-1\). If \(0 \leq \beta_{2} \leq 1\), move onto the next step. Otherwise, successive applications of the formula of Corollary \ref{cor: identities} for \(i=2\) allows to write \(W'\) as a linear combination of elements in \(\mathcal{F}\) of the form
\[
W''\bydef P_{0, \gamma_{2}, \gamma_{3}, \dotsc, \gamma_{d}},
\]
with \(0 \leq \gamma_{2} \leq 1\). Continuing in this fashion, one indeed shows that \(W\) can be written as a linear combination of elements in \(\tilde{\mathcal{F}}\), proving the proposition.

\end{proof}

\section{}
\label{appendix: B}
The goal of this Appendix \ref{appendix: B} is to detail the proof of Proposition \ref{prop: isomorphism}:
\begin{proposition}
\label{prop: appendix B}
The linear map
\[
e\colon
\left(
\begin{array}{ccc}
 \mathcal{D}_{\mathbi{\lambda}}^{(k)} & \longrightarrow  &  \Image(E(p_{T_{\can(\mathbi{\lambda})}})) \subset E
 \\
  D_{T} & \longmapsto  & e_{T}\otimes_{\C[\Sigma_{d}]} c_{\mathbi{\lambda}}
\end{array}
\right)
\]
is well-defined, and is an isomorphism. 
\end{proposition}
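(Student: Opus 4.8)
The plan is to reduce the statement to a single assertion about $E=F^{\otimes d}$ and then to recognise both sides of it through the classical theory of Schur modules. First recall that $\pi\colon E\to\mathcal{D}_{\mathbi{\lambda}}^{(k)}$, $e_{T}\mapsto D_{T}$, is already a well-defined surjection: as $T$ runs over the Young tableaux of shape $\mathbi{\lambda}$ filled with $\Set{0,\dotsc,k}$, the tensors $e_{T}=(\otimes\prod_{(i,j)\in T})X^{(T(i,j))}$ are exactly the canonical basis of $E=\bigotimes_{\text{boxes of }\mathbi{\lambda}}F$, so $\pi$ is defined by its values on a basis. Since the content of the proposition is precisely that $E(p_{T_{\can(\mathbi{\lambda})}})$ factors through $\pi$ as $e\circ\pi$ (this is the commutative diagram \eqref{eq: diagram}), the whole statement is equivalent to
\[
\Ker\pi=\Ker\big(E(p_{T_{\can(\mathbi{\lambda})}})\big)\subset E:
\]
the inclusion $\subseteq$ makes $e$ well-defined and surjective onto $\Image(E(p_{T_{\can(\mathbi{\lambda})}}))$, and the reverse inclusion — equivalently, the equality of dimensions $\dim\mathcal{D}_{\mathbi{\lambda}}^{(k)}=d_{\mathbi{\lambda}}(k+1)=\dim\Image(E(p_{T_{\can(\mathbi{\lambda})}}))$, see Proposition \ref{prop: dim Schur}, Proposition \ref{prop: hw vectors} and Theorem \ref{thm: struct gen} — yields injectivity. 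Here one uses the standing hypothesis that $\mathbi{\lambda}$ has at most $(N+1)$ parts, which is exactly what prevents $\mathcal{D}_{\mathbi{\lambda}}^{(k)}$ from collapsing to $0$ (a column of length $>N+1$ forces a determinant $D_{\mathbi{i}}$ in more than $N+1$ of the variables $X_{0},\dotsc,X_{N}$, hence zero); in the regime $k\geq d-1$ where the proposition is applied, $\mathbi{\lambda}$ automatically has at most $d\leq k+1$ parts, so $\Image(E(p_{T_{\can(\mathbi{\lambda})}}))\simeq S^{\mathbi{\lambda}}F$ is nonzero as well.

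Next I would identify $\mathcal{D}_{\mathbi{\lambda}}^{(k)}$, together with the surjection $\pi$, with Fulton's Schur module of $F\simeq\C^{k+1}$ and its tautological presentation as a quotient of $F^{\otimes d}$. For a column of $\mathbi{\lambda}$ of length $\ell$, filled in a tableau $T$ by values $i_{0}<\dotsb<i_{\ell-1}$ in $\Set{0,\dotsc,k}$, the determinant $D_{(i_{0},\dotsc,i_{\ell-1})}$ realises the basis vector $X^{(i_{0})}\wedge\dotsb\wedge X^{(i_{\ell-1})}$ of $\bigwedge^{\ell}F$ under the classical $\GL(F)$-equivariant embedding of $\bigwedge^{\ell}F$ into the polynomial ring (Pl\"ucker coordinates); hence $D_{T}$, being the product over the columns of $T$ of such minors, is the image under polynomial multiplication of the corresponding decomposable element of $\bigwedge^{\ell_{1}}F\otimes\bigwedge^{\ell_{2}}F\otimes\dotsb$, where $\ell_{1}\geq\ell_{2}\geq\dotsb$ are the column lengths of $\mathbi{\lambda}$, and the linear relations among the $D_{T}$ are then the Laplace/Sylvester exchange (``straightening'') relations. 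By \cite{Fulton}[II.8.1, Lemma 3 and Theorem 1] — and its Corollary, already invoked in the proof of Proposition \ref{prop: hw vectors} — these are precisely the defining relations of the Schur module $S^{\mathbi{\lambda}}F$, so $\mathcal{D}_{\mathbi{\lambda}}^{(k)}\simeq S^{\mathbi{\lambda}}F$ canonically and $\pi$ becomes the tautological quotient map $F^{\otimes d}\twoheadrightarrow S^{\mathbi{\lambda}}F$, $e_{T}\mapsto[T]$. In particular $\Ker\pi$ is the span of the exchange relations.

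The remaining step is to see that $E(p_{T_{\can(\mathbi{\lambda})}})\colon v\mapsto v\otimes_{\C[\Sigma_{d}]}c_{\mathbi{\lambda}}$ realises the \emph{same} quotient. By \cite{Fulton}[II.7.4, Proposition 4] together with \cite{Fulton}[II.8.1, Lemma 3], the image of $v\mapsto v\otimes_{\C[\Sigma_{d}]}c_{\mathbi{\lambda}}$ on $F^{\otimes d}$ is canonically isomorphic to the Schur module $S^{\mathbi{\lambda}}F$, and its kernel is again exactly the span of the exchange relations; equivalently, the tautological surjection $F^{\otimes d}\to S^{\mathbi{\lambda}}F$ factors through $E(p_{T_{\can(\mathbi{\lambda})}})$, with $[T]$ corresponding to $e_{T}\otimes_{\C[\Sigma_{d}]}c_{\mathbi{\lambda}}$. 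Hence $\Ker(E(p_{T_{\can(\mathbi{\lambda})}}))=\Ker\pi$, which by the first paragraph shows that $e$ is a well-defined isomorphism; moreover $e$ is $\GL(F)$-equivariant because both $\pi$ and $E(p_{T_{\can(\mathbi{\lambda})}})$ are (cf.\ Lemma \ref{lemma: final inter}). Alternatively, once $e$ is known to be well-defined one may conclude by Schur's lemma: $e$ is a $\GL(F)$-equivariant map out of the irreducible representation $\mathcal{D}_{\mathbi{\lambda}}^{(k)}\simeq S^{\mathbi{\lambda}}F$ whose image equals $\Image(E(p_{T_{\can(\mathbi{\lambda})}}))\neq 0$, hence injective, hence an isomorphism by the dimension count above.

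The main obstacle is the reconciliation performed in the last two paragraphs: identifying the determinantal ``column'' presentation of $S^{\mathbi{\lambda}}F$ with the Young-symmetrizer presentation $v\mapsto v\,c_{\mathbi{\lambda}}$ of $F^{\otimes d}$, i.e.\ the equivalence of the two classical constructions of Schur modules. This is standard material (it is the content of \cite{Fulton}[II.7 and II.8]), but several points require care: the order of row-symmetrization and column-antisymmetrization inside $c_{\mathbi{\lambda}}=b_{T_{\can(\mathbi{\lambda})}}a_{T_{\can(\mathbi{\lambda})}}$ introduces an ambiguity up to a nonzero scalar, harmless here because the target is irreducible; everything is written for the \emph{canonical} standard tableau $T_{\can(\mathbi{\lambda})}$, but a different standard tableau of the same shape merely conjugates the idempotent and changes nothing up to isomorphism, by Lemma \ref{lemma: indep}; and the hypothesis that $\mathbi{\lambda}$ has at most $(N+1)$ parts must be used precisely where indicated — it is what keeps $\mathcal{D}_{\mathbi{\lambda}}^{(k)}$, hence $e$, from being the zero map.
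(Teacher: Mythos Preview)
Your proof is correct and follows essentially the same route as the paper: both arguments identify the relations among the $D_{T}$'s with the exchange/straightening relations (Fulton II.8.1) and then check that the Young symmetrizer map $e_{T}\mapsto e_{T}\otimes_{\C[\Sigma_{d}]}c_{\mathbi{\lambda}}$ annihilates exactly these relations (Fulton II.7.4), concluding by the dimension count $\dim\mathcal{D}_{\mathbi{\lambda}}^{(k)}=d_{\mathbi{\lambda}}(k+1)=\dim S^{\mathbi{\lambda}}F$. The only difference is packaging: you phrase well-definedness and injectivity together as the single equality $\Ker\pi=\Ker E(p_{T_{\can(\mathbi{\lambda})}})$, whereas the paper verifies the identities $c_{\mathbi{\lambda}}+\sigma c_{\mathbi{\lambda}}=0$ and $c_{\mathbi{\lambda}}-\sum_{\sigma}\sigma c_{\mathbi{\lambda}}=0$ explicitly by transporting them, via the isomorphism $\C[\Sigma_{d}]\cdot a_{T_{\can(\mathbi{\lambda})}}\simeq M^{\mathbi{\lambda}}$, to the corresponding identities for the polytabloids $v_{T}$ in the Specht module.
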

\begin{proof}
From \cite{Fulton}[II.8.1 Lemma 3 \& Theorem 1], the following two facts hold. First, the polynomials \(D_{T}\)'s, where \(T\) runs over Young tableaux of shape \(\mathbi{\lambda}\) filled with \(\Set{0, \dotsc, k}\), satisfy the so-called \textsl{exchange relations}, namely:
\begin{enumerate}[(i)]
\item{} \(D_{T}+D_{T'}=0\) if \(T'\) is obtained from \(T\) by interchanging two entries in a column;
\item{} \(D_{T}-\sum\limits D_{S}=0\), where the sum is over all Young tableaux \(S\) obtained from \(T\) by an exchange operation of the following type:
\begin{itemize}
\item{} fix \(2\) consecutive columns in \(T\), say the \((i-1)\)th and the \(i\)th column;
\item{} fix a number \(\ell\) that is less than the size of \(i\)th column, and choose \(\ell\) boxes in the \((i-1)\)th column;
\item{} exchange the \(\ell\) entries of the boxes in the \((i-1)\)th column with the first \(\ell\) entries of the \(i\)th column, while maintaining the vertical order in each. 
\end{itemize}
\end{enumerate}
Second, as indicated in Lemma \ref{prop: hw vectors}, the polynomials \(D_{T}\)'s, where \(T\) runs over semi-standard tableaux filled with \(\Set{0, \dotsc, k}\), form a basis \(\mathcal{B}\) of \(\mathcal{D}_{\mathbi{\lambda}}^{(k)}\). Namely, they form a free family, and for any non-semi-standard tableau \(T\), one can use relations of the above type \((i)\) and \((ii)\) to express (uniquely) \(D_{T}\) as a linear combinations of elements in \(\mathcal{B}\).

Consider now one of the two exchange operations as described in \((i)\) or \((ii)\). Note that such an operation is completely described by a suitable permutation \(\sigma \in \Sigma_{d}\), once one has fixed a labelling of the boxes of the Young frame of shape \(\mathbi{\lambda}\): here, one takes the canonical one, namely the Young tableau \(T_{\can(\mathbi{\lambda})}\).
Fix \(T\) a Young tableau of shape \(\mathbi{\lambda}\) filled with \(\Set{0, \dotsc, k}\), and observe that the following equality holds:
\[
e(D_{\sigma \cdot T})
=
e_{T}\otimes_{\C[\Sigma_{d}]} \sigma \times c_{\mathbi{\lambda}}.
\]

Therefore, proving that the map is well-defined amounts to proving that the Young symmetrizer \(c_{\mathbi{\lambda}}\) satisfies the following two properties:
\begin{center}
\begin{enumerate}[a)]
\item{} \(c_{\mathbi{\lambda}}+\sigma \times c_{\mathbi{\lambda}}=0\) for any permutation \(\sigma\) corresponding to an exchange of type \((i)\);
\item{} \(c_{\mathbi{\lambda}}-\sum\limits_{\sigma} \sigma \times c_{\mathbi{\lambda}}=0\) where the sum runs over permutations corresponding to exchanges as described in \((ii)\).
\end{enumerate}
\end{center}

These properties are indeed satisfied, and this can be justified as follows. A \textsl{tabloïd} is defined to be an equivalence class of Young tableaux, two tableaux being equivalent if their corresponding rows contain the same entries. Denote by \(M^{\mathbi{\lambda}}\) the complex vector space spanned by tabloïds of shape \(\mathbi{\lambda}\) filled with \(\Set{1, \dotsc, d}\). It contains an irreducible representation of \(\Sigma_{d}\), called a \textsl{Specht module}, which is defined as follows\footnote{Obviously, it coïncides up to isomorphism with the irreducible representation of \(\Sigma_{d}\) associated to the partition \(\mathbi{\lambda}\).}
\[
S^{\mathbi{\lambda}}
\bydef
\C[\Sigma_{d}] \cdot v_{\mathbi{\lambda}},
\]
where \(v_{\mathbi{\lambda}}\bydef v_{T_{\can(\mathbi{\lambda})}} \bydef \sum\limits_{q \in C(T_{\can(\mathbi{\lambda})})} \epsilon(q) \{q.T_{\can(\mathbi{\lambda})}\}\).
Note that there is a natural isomorphism of left \(\C[\Sigma_{d}]\)-module
\[
\psi\colon
\left(
\begin{array}{ccc}
 \C[\Sigma_{d}]\cdot a_{T_{\can(\mathbi{\lambda})}} & \longrightarrow  & M^{\mathbi{\lambda}}  \\
  \sigma \times a_{T_{\can(\mathbi{\lambda})}} & \longmapsto &  \{\sigma \cdot T_{\can(\mathbi{\lambda})}\}    \end{array}
\right).
\]
Note also that the following equality holds for any \(\sigma \in \Sigma_{d}\)
\[
\psi(\sigma\times c_{\mathbi{\lambda}})
=
\abs{R(T_{\can(\mathbi{\lambda})})}
\times
\sigma \cdot v_{\mathbi{\lambda}}
=
\abs{R(T_{\can(\mathbi{\lambda})})}
\times
v_{\sigma \cdot T_{\can(\mathbi{\lambda})}}.
\]
Here, one has used the identity
\(a_{T_{\can(\mathbi{\lambda})}}^{2}=\abs{R(T_{\can(\mathbi{\lambda})})} \times a_{T_{\can(\mathbi{\lambda})}}\) for the first egality, whereas the second egality follows (almost) by definition. In particular, this shows that the map \(\psi\) sends the irreducible representation \(\C[\Sigma_{d}] \cdot c_{\mathbi{\lambda}} \subset \C[\Sigma_{d}]\) to the irreducible representation \(S^{\mathbi{\lambda}}=\C[\Sigma_{d}] \cdot v_{\mathbi{\lambda}} \subset M^{\mathbi{\lambda}}\).

Now, the sought equalities a) and b) follow from the corresponding ones in \(M^{\mathbi{\lambda}}\). Those equalities are exactly the content of \cite{Fulton}[II.7.4 Corollary of Proposition 4]. This proves that the map \(e\) is well-defined. To see that \(e\) is an isomorphism, observe first that, by its very definition, it is surjective. Then, observe that the target and source space both have the same dimension, since \(E(S^{\mathbi{\lambda}})\simeq S^{\mathbi{\lambda}}\C^{k+1}\) (see Theorem \ref{thm: struct gen} and Proposition \ref{prop: hw vectors}). This finishes the proof.
\end{proof}

\bibliographystyle{alpha}
\bibliography{Schmidt-Kolchin}

 \end{document}